\documentclass[12pt]{amsart}
\usepackage[english]{babel}

\input{packages.sty}
\input{commands.sty}

\title{On perfectoid ideals}
\author[L. Navarro Chafloque]{Léo Navarro Chafloque}
\address{EPFL SB MATH CAG,
MA C3 585 (Bâtiment MA),
Station 8,
1015 Lausanne}
\email{leo.navarrochafloque@epfl.ch}

\thanks{2020 {\em Mathematics Subject Classification\/}: 14G45, 13J10, 13B25}

\keywords{Perfectoid rings, perfectoidization, perfectoid ideals.}

\begin{document}

\begin{abstract}
In this paper, we study perfectoid ideals using the reduction modulo $p$ of perfectoid rings. We show that perfectoid ideals correspond to ideals in the reduction modulo $p$ on which Frobenius is surjective. We establish the existence of universal $p$-adic power series whose evaluations, together with their compatible $p$-power roots, generate perfectoidizations of ideals. In the special case of the perfectoidization of $\Z_p^{\cycl}\abracket{t^{1/p^\infty}}/(t-1)$, we describe a linear algebra algorithm for computing finite approximations of such generators. We illustrate this by an explicit approximation modulo $8$ of an element $\sigma\in (t-1)_{\pfd}\subseteq \Z_2^{\cycl}\abracket{t^{1/2^\infty}}$ which admits compatible sequences of $2$-power roots and satisfies $\sigma\equiv t-1 \mod 2$.
\end{abstract}

\maketitle  

\tableofcontents

\section{Introduction}

One of the most surprising aspects of the theory of perfectoid algebras is that if $R$ is a given perfectoid ring and $I\subseteq R$ is an ideal, then there is a \emph{surjective} map to a perfectoid ring $R\to (R/I)_{\pfd}$ where the map is initial among maps from $R$ to perfectoid rings that send $I$ to zero. The surprising aspect is not the existence, but the surjectivity of this map, even in the case where $I$ is not generated by elements which admit a compatible sequence of $p$-power roots.\footnote{Actually, in the early days of the theory, this was believed to be false in the case of \Cref{equation:zp1}, as the now superseded \cite[Remark 2.2.4]{Scholze2015} indicates (see \cite[Remark 7.5]{bhatt2022Prismsa}).} This is proven in \cite[Theorem 7.4]{bhatt2022Prismsa} as a consequence of \cite[Theorem 7.14]{bhatt2022Prismsa}. This leads to the fact that for every ideal $I\subseteq R$ there is a notion of \emph{perfectoidization} of the ideal.

A first interesting non-trivial example is the quotient
\begin{equation}\label{equation:zp1}
\Z_p^{\cycl}\abracket{t^{1/p^\infty}}/(t-1).
\end{equation}
With $p$ inverted, this example was actually studied already back in the article \cite[Section 1]{BSMF_1978__106__225_0}, although in a different language. Translating what they did into modern language, one can deduce that what they showed implies that $(t-1)_{\pfd}$ is generated by the $p$-adic closure of the nilpotents in the above ring. 

Without their techniques, one can actually show that the ideal $(t-1)_{\pfd}$ is ($p$-adically) generated by elements which admit a compatible sequence of $p$-power roots (this, in fact, characterizes perfectoid ideals, see \cite[Lemma 4.6]{ishizuka2026decompositiontheoremperfectoidrings}). Even more interestingly, one of the consequences of the present work is that there exists an expression
\begin{equation}\label{equation:expression_power}
t-1=\sum_{n=0}^{\infty} f_n p^n,
\end{equation}

where $f_n\in \Z_p^{\cycl}\abracket{t^{1/p^\infty}}$ are elements which admit a compatible sequence of $p$-power roots; for each $n\in \N$, we have $f_n\in (t-1)_{\pfd}$, and actually $(t-1)_{\pfd}=\overline{(f_n^{1/p^{\infty}})_{n\in \N}}$ is $p$-adically generated by the $f_n$'s and their $p$-power roots. In particular, there is an element $\sigma=f_0$ which admits a compatible sequence of $p$-power roots, with $\sigma\in (t-1)_{\pfd}$ and $\sigma\equiv t-1 \mod p$. As explained in \Cref{prop:no_finite_approximation}, the element $\sigma$ is necessarily a non-trivial convergent power series in the $p$-adic topology -- the author notes that this is a concrete example which highlights the importance of $p$-completion in the theory.

The first impulse of this work was to see if it was possible to understand such an element as $\sigma$ mentioned above. To approach this question, we noted the following: it is well known that the category of perfectoid algebras over a perfectoid ring is equivalent to a category of complete perfect algebras over a perfect complete ring using the tilt construction. Actually, this equivalence works also if we simply consider \emph{the reduction modulo p}, instead of the tilt. This is, for example, observed in \cite[Proposition 3.2.6]{bhatt2025Aspects} for the derived reduction modulo $p$ (which, in the context of $p$-torsion-free perfectoid rings, agrees with the usual reduction modulo $p$) -- we show in this work that the classical reduction modulo $p$ actually always works to construct such an equivalence. This is the content of \Cref{proposition:eidtilt}. We identify the target category among $\F_p$-algebras and give it a name for the sake of clarity in the article: \emph{eidetic perfectoid rings} (\Cref{definition:eid}).

Going back to the matter of perfectoid ideals which initially interested us, the idea was to use the fact that perfectoid rings were totally determined by their reduction modulo $p$, where it was potentially easier to make explicit calculations (mainly because the freshman's dream identity $(a+b)^p=a^p+b^p$ holds modulo $p$). An important aspect was also that when taking the reduction modulo $p$, in contrast with the tilt construction, \emph{no completeness} was required in the theory. As $p$-adic completion is one of the things which sometimes make things difficult to calculate explicitly, this new category without any topology seemed interesting to the author to investigate. As an example, the reduction in this context of the perfectoid ring $\Z_p^{\cycl}\abracket{t^{1/p^\infty}}$ can be identified with
\[\F_p[x^{1/p^\infty},t^{1/p^\infty}]/(x).\]
Note the presence of infinitely many nilpotent elements in this ring: keeping these nilpotents in the theory is actually what makes possible the reconstruction of the initial perfectoid ring from its reduction modulo $p$.

As a first step in this direction of understanding perfectoid ideals using this reduction, we show in this work that there is a correspondence between perfectoid ideals and so-called \emph{eidetic perfectoid ideals} in the eidetic reduction (\Cref{definition:eid-ideals}) which are simply ideals on which the absolute Frobenius acts surjectively. The following is the main result in \Cref{section:pfdideals_modp}.

\begin{thm}[\Cref{theorem:eid-tilt-ideals}]\label{thm:perfectoid-ideals-correspondence}
Let $R$ be a perfectoid ring and $S=R/\varpi^p R$ the associated eidetic perfectoid ring (see \Cref{proposition:eidtilt}). Then reduction modulo $\varpi^p$ induces a bijection
\[\{J\subseteq  R \mid J \text{ is a perfectoid ideal}\}\to \{I\subseteq  S \mid I\text{ is an eidetic perfectoid ideal}\}.\]
\end{thm}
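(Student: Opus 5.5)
The approach is to pass through quotients, using the correspondence between perfectoid rings and eidetic perfectoid rings (\Cref{proposition:eidtilt}). A perfectoid ideal $J\subseteq R$ is, by definition, the kernel of a surjection $R\to R'$ onto a perfectoid ring; equivalently $J=J_{\pfd}$, i.e.\ $R/J$ is perfectoid. Reducing modulo $\varpi^p$ gives a surjection $S=R/\varpi^pR\to R'/\varpi^pR'$, whose kernel is $I=(J+\varpi^pR)/\varpi^pR$. The target is an eidetic perfectoid ring, so the first step is to show that kernels of surjections between eidetic perfectoid rings are exactly the ideals on which Frobenius is surjective (\Cref{definition:eid-ideals}). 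For the direction needed here: if $x\in I$, choose a $p$-th root $y\in S$ of $x$, which exists because Frobenius is surjective on $S$. Its image in $R'/\varpi^p$ satisfies $\bar y^p=0$. Since Frobenius on an eidetic perfectoid ring induces an isomorphism $S/(\varpi)\to S/(\varpi^p)$ (more precisely, its kernel is exactly $(\varpi^{1})$, up to the normalisation fixed in \Cref{definition:eid}), we get $\bar y\in \varpi R'/\varpi^p$. Hence $y\in I+\varpi S$, say $y=i+\varpi s$, and then $x=y^p=i^p+\varpi^p s^p=i^p$ lies in $\operatorname{Frob}(I)$. So the map $J\mapsto I$ is well defined.

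For the inverse, start from an eidetic perfectoid ideal $I\subseteq S$. First show that $S/I$ is again eidetic perfectoid. Frobenius on $S/I$ is surjective. For the kernel condition, suppose $y^p\in I$. Write $y^p=i^p$ with $i\in I$; then $(y-i)^p=0$ in characteristic $p$, so $y-i\in\varpi S$ by the eidetic property of $S$, and therefore $y\in I+\varpi S$, as required. By \Cref{proposition:eidtilt}, $S/I$ then lifts to a perfectoid ring $R'$ with $R'/\varpi^p=S/I$. The map $R\to R'$ is obtained from the equivalence, since maps of perfectoid rings correspond to maps of their reductions. Set $J=\ker(R\to R')$. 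Surjectivity of $R\to R'$ follows from surjectivity modulo $\varpi^p$ together with $\varpi$-adic completeness of both rings.

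It remains to check that the two constructions are inverse to each other.
\begin{itemize}
\item Starting from $I$: the reduction of $J$ is $I$, because $R'/\varpi^p=S/I$ and $J+\varpi^pR=\ker(R\to R'/\varpi^p)$.
\item Starting from $J$: let $J'$ be the ideal produced from its reduction $I$. Then $R/J$ and $R/J'$ are perfectoid rings under $R$ with the same reduction $S/I$ as $S$-algebras. Full faithfulness in \Cref{proposition:eidtilt} gives an $R$-isomorphism $R/J\cong R/J'$, hence $J=J'$.
\end{itemize}

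I expect the main obstacle to be that \Cref{proposition:eidtilt} must be applied relatively, i.e.\ as an equivalence between perfectoid $R$-algebras and eidetic perfectoid $S$-algebras. The identity $J+\varpi^pR=\ker(R\to R'/\varpi^pR')$ also needs care, since $R'/\varpi^p$ should mean $R'/\varpi^pR'$ computed with the image of $\varpi$. If the proposition is stated only absolutely, I would deduce the relative version by applying full faithfulness to the structure maps.
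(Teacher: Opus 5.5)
Your proposal is correct and follows the paper's proof: you identify perfectoid ideals with surjections $R\to R'$ onto ($\varpi$-complete) perfectoid rings, transport these through \Cref{proposition:eidtilt} to relatively perfect quotients $S\to S/I$, and check that $S/I$ is relatively perfect over $S$ exactly when $I^{[p]}=I$. The only difference is that you prove this last equivalence by hand, and the input you need there is $\ker F_{S'}=\varpi S'$ (the criterion of \Cref{lemma:relperf}), not just that $S'$ is eidetic as your wording suggests; the paper instead derives it from the induced isomorphism $(-)^p\colon S/(I,\alpha)\to S/I^{[p]}$ and two-out-of-three.
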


This is a correspondence of the form that appeared in \cite[Main Theorem A]{Dine2024}, and our account actually reproves this correspondence and slightly generalizes it (see \Cref{remark:dine_isihizuka}).

Another question is how to get the expression in \Cref{equation:expression_power}. In \Cref{section:on_equations} we show, using the theory of perfect prismatic envelopes (\Cref{section:prism-env}), that there are actually \emph{universal equations, in the form of power series} that, given an element $r\in R$ in a perfectoid ring, allow us to produce $p$-adic generators of $(r)_{\pfd}$ along with an expression as in \Cref{equation:expression_power}. Namely, we show the following.

\begin{proposition}[\Cref{prop:UniversalEquations}, \Cref{remark:over_an_algebra}]\label{prop:univ_eq_intro}
Fix a base perfectoid ring $R_0$ (for example $\Z_p^{\cycl}$ or the $p$-completion of $\Z_p[p^{1/p^{\infty}}]$). Then there exist power series
\[g_n\in R_0\abracket{t_j^{1/p^{\infty}}}_{j\in \N}\]
that admit a compatible sequence of $p$-power roots in the power series ring, such that for any perfectoid $R_0$-algebra $R$ and any element $r\in R$, with a chosen representation as $r=\sum_{i=0}^{\infty}\sharp(r_i^{1/p^i})p^i$, we have $$(r)_{\pfd}=\overline{(g_n(r_j)^{1/p^{\infty}})_{n\in \N}}.$$
Moreover,
\[r=\sum_{n=0}^{\infty}g_n(r_j)^{1/p^n}p^n.\]
\end{proposition}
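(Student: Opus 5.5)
Rather than build the $g_n$ by hand, I would take them from the universal case, where the two identities in the statement hold tautologically, and then push them forward to an arbitrary $R$ by base change.

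\textbf{Universal ring.} Let $A=R_0\abracket{t_j^{1/p^{\infty}}}_{j\in\N}$, the $p$-completed perfectoid polynomial ring in countably many variables with all $p$-power roots. It is perfectoid, and its universal property is this: a map $A\to R$ of perfectoid $R_0$-algebras amounts to a choice of elements $s_j\in R^\flat$ (compatible systems of $p$-power roots), sending $t_j^{1/p^k}\mapsto \sharp(s_j^{1/p^k})$. Set
\[
u=\sum_{i\ge 0}\sharp\bigl(t_i^{1/p^i}\bigr)p^i\in A .
\]
A representation $r=\sum_i \sharp(r_i^{1/p^i})p^i$ in $R$ is then exactly a map $\phi\colon A\to R$ with $\phi(u)=r$.

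\textbf{Generators of $(u)_{\pfd}$.} By the result quoted in the introduction (\cite[Lemma 4.6]{ishizuka2026decompositiontheoremperfectoidrings}), the perfectoid ideal $(u)_{\pfd}\subseteq A$ is $p$-adically generated by elements admitting compatible $p$-power roots. The work is to choose a countable family $g_n$ of such elements with
\[
u=\sum_n g_n^{1/p^n}p^n \qquad\text{and}\qquad (u)_{\pfd}=\overline{(g_n^{1/p^\infty})_n}.
\]
I would build the $g_n$ by successive approximation. Reduce modulo $p$ (or modulo $\varpi^p$) and use \Cref{thm:perfectoid-ideals-correspondence}: the image $I$ of $(u)_{\pfd}$ is the largest ideal of the eidetic reduction on which Frobenius is surjective and which lies in the eidetic perfection of $(\bar u)$. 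So $\bar u\in I$ has a compatible system of $p$-th roots inside $I$, and these lift to some $g_0\in (u)_{\pfd}$ with compatible roots and $g_0\equiv u \bmod p$.

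Since $A$ is $p$-torsion free, $(u-g_0)/p$ is a well-defined element of $A$. It lies in $(u)_{\pfd}$ because $A/(u)_{\pfd}$ is perfectoid and hence $p$-torsion free in this situation; making that precise is a point I would have to settle. Iterating gives $g_n$ with the required expansion, where the roots $g_n^{1/p^n}$ can be absorbed into the choice at each step. To get generation rather than mere containment, interleave a countable $p$-adic generating set of $(u)_{\pfd}$ into the sequence with zero contribution to the sum. If one prefers the paper's framing, the perfect prismatic envelope of \Cref{section:prism-env} provides exactly this universal object and the same generators.

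\textbf{Base change.} For $\phi\colon A\to R$ with $\phi(u)=r$, perfectoidization commutes with base change: $R/(r)_{\pfd}\cong (R\hat\otimes_A A/(u)_{\pfd})_{\pfd}$, and the right side is a quotient of $R$ because $R\to (R/I)_{\pfd}$ is surjective. Hence $(r)_{\pfd}=\overline{\phi((u)_{\pfd})R}=\overline{(g_n(r_j)^{1/p^\infty})}$. Applying $\phi$ to the expansion of $u$, which is continuous and preserves compatible roots, gives $r=\sum_n g_n(r_j)^{1/p^n}p^n$.

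\textbf{Main obstacle.} The hard step is the construction in the universal case: showing $(u-g_0)/p\in(u)_{\pfd}$ and that the generation statement survives passing to the closure. I would handle both via the $p$-torsion-freeness and $p$-completeness of $A/(u)_{\pfd}$, which I expect to be the genuine difficulty.
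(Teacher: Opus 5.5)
Your reduction to a universal pair $(A,u)$ and your base-change step are fine: $R/\overline{(\phi(g_n)^{1/p^{\infty}})}$ is perfectoid and kills $r$, while $A\to R/(r)_{\pfd}$ kills $u$, and these give both inclusions. The universal construction, however, has a genuine gap, exactly where you suspect it.

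\textbf{The division step is unjustified.} Everything after $g_0$ rests on $(u-g_0)/p\in(u)_{\pfd}$, i.e.\ on $A/(u)_{\pfd}$ being $p$-torsion-free. The reason you give, \emph{perfectoid, hence $p$-torsion-free}, is false: quotients of $p$-torsion-free perfectoid rings by perfectoid ideals routinely have $p$-torsion, e.g.\ $\Z_p^{\cycl}/(p)_{\pfd}\cong\F_p$. Without this, the recursion producing $g_1,g_2,\dots$ never starts. So the expansion $u=\sum_n g_n^{1/p^n}p^n$, which is the real content of the statement, is not established.

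\textbf{$A$ itself can have $p$-torsion.} $A=R_0\abracket{t_j^{1/p^{\infty}}}_{j\in\N}$ has $p$-torsion whenever $R_0$ does, e.g.\ $R_0=\Z_p^{\cycl}\times\F_p$. The statement allows such bases, so even writing $(u-g_0)/p$ is not justified in general.

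\textbf{Interleaving is unnecessary.} Inserting extra terms would in any case change the sum. Once the expansion holds, $\overline{(g_n^{1/p^{\infty}})}$ is a perfectoid ideal contained in $(u)_{\pfd}$ and containing $u$. By minimality it equals $(u)_{\pfd}$, exactly as in \Cref{proposition:equations_equivalent_to_Bhatt}.

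\textbf{How to repair it.} First, build the $g_n$ over a $p$-torsion-free base that maps to every perfectoid ring, such as $A_0/(d)$ from \Cref{remark:universal_oriented_perfect_prism}, and then specialize as in \Cref{remark:over_an_algebra}. Second, prove torsion-freeness by Yoneda. On perfectoid $R_0$-algebras, $A$ corepresents $R\mapsto A_{\inf}(R)$ via Witt coordinates, with $u$ corresponding to $\theta_R$. Hence $A/(u)_{\pfd}$ corepresents $R\mapsto\ker\theta_R=d\,A_{\inf}(R)\cong A_{\inf}(R)$, where $d$ is the image of a generator of $\ker\theta_{R_0}$ and is a non-zero-divisor. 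Therefore $A/(u)_{\pfd}\cong R_0\abracket{s_j^{1/p^{\infty}}}_{j\in\N}$, which is $p$-torsion-free when $R_0$ is. With this in hand, your successive approximation via \Cref{lemma:approximations} goes through.

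\textbf{Comparison with the paper.} The paper sidesteps the torsion issue entirely. It stays in the $\delta$-ring $B$, picks a preimage $\sigma$ of $x/d$, and takes the Teichm\"uller lifts of the Witt components of $\alpha=x-\sigma d$. These lie in the kernel because maps of Witt vectors act componentwise. Since $\theta(\alpha)=\theta(x)$, the expansion follows directly, with no division by $p$.
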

These power series $g_n$ are not unique at all, and not canonical in any sense, but they provide for every $R_0$-algebra the ``same way'' to construct the perfectoidization of an ideal: take generators of the ideal, and then apply these $p$-adic power series to get elements with a system of compatible $p$-power roots which generate $p$-adically the perfectoidization of the ideal.

In the last section of this work, \Cref{section:program}, we go back to the original example \Cref{equation:zp1}. Namely, using the eidetic reduction described in \Cref{section:pfdideals_modp}, we explain how one can, in principle, by solving linear systems over $\F_p$ (\Cref{section:algori}), get approximations of an element $f_0=\sigma$ as in \Cref{equation:expression_power}. Namely, using a SageMath script\footnote{The SageMath code is available at this \href{https://github.com/leo-nc-git/perfectoid_kernel}{GitHub repository}.} implementing the situation, we are able to get the following result.

\begin{proposition}[\Cref{prop:kernel_results_2_2}]\label{prop:kernel_results_intro}
For $p=2$, there is an element $\sigma=f_0$ as in \Cref{equation:expression_power} which satisfies
\[
\begin{aligned}
\sigma \equiv{}&t-1\\
&+2t^{1/8}\Bigl(1-\xi_{16}^2-\xi_{16}^4-\xi_{16}^6
 +(1+\xi_{16}^2+\xi_{16}^4+\xi_{16}^6)t^{1/4}\\
&\qquad +(1+\xi_{16}^2-\xi_{16}^4+\xi_{16}^6)t^{1/2}
 +(1-\xi_{16}^2+\xi_{16}^4-\xi_{16}^6)t^{3/4}\Bigr)\\
&+4(1+t^{1/2})\Bigl(\xi_{16}^2+\xi_{16}^3+\xi_{16}^4+\xi_{16}^7
 +(\xi_{16}^2+\xi_{16}^4+\xi_{16}^5+\xi_{16}^7)t^{1/16}\\
&\qquad +(\xi_{16}+\xi_{16}^5)(t^{1/8}+t^{3/8})
 +(1+\xi_{16}^2+\xi_{16}^5+\xi_{16}^7)t^{3/16}\\
&\qquad +(\xi_{16}^3+\xi_{16}^4+\xi_{16}^6+\xi_{16}^7)t^{1/4}
 +(\xi_{16}+\xi_{16}^2+\xi_{16}^3+\xi_{16}^4)t^{5/16}\\
&\qquad +(1+\xi_{16}+\xi_{16}^2+\xi_{16}^3)t^{7/16}\Bigr)\pmod{8}.
\end{aligned}
\]
Recall that this element $\sigma$ admits a compatible sequence of $2$-power roots, $\sigma\in (t-1)_{\pfd}$ and $\sigma\equiv t-1 \mod 2$. In the above, $\xi_{16}$ denotes a primitive sixteenth root of unity.

\end{proposition}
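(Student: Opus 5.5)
This is essentially a computational result, so the plan is to reduce the existence of $\sigma$ modulo $8$ to finite linear algebra over $\F_2$ in the eidetic reduction. Then we exhibit and certify a solution, with the SageMath script doing the search.

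First, we reduce to the eidetic side. By \Cref{thm:perfectoid-ideals-correspondence}, $J=(t-1)_{\pfd}\subseteq R=\Z_2^{\cycl}\abracket{t^{1/2^\infty}}$ corresponds to the eidetic perfectoid ideal $I=J/\varpi^2 J\subseteq S=R/\varpi^2R$. We take $\varpi$ to be a compatible system with $\varpi^2\mid 2$, so that $S\cong\F_2[x^{1/2^\infty},t^{1/2^\infty}]/(x)$. The ideal $I$ is the largest ideal containing the image of $t-1$ on which Frobenius is surjective; concretely, it should be the union over $k$ of the ideals generated by elements with compatible chains of square roots lying over $t-1$. Since $J$ is generated by elements with compatible $2$-power roots (\cite[Lemma 4.6]{ishizuka2026decompositiontheoremperfectoidrings}), any $s\in J$ with $s\equiv t-1\bmod 2$ can be replaced by such an element. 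Its Teichmüller-type lift $\sigma=\lim (\tilde s_k)^{2^k}$ of compatible roots then lies in $J$ and is congruent to $t-1$ modulo $2$. Moreover, $\sigma \bmod 8$ is determined by the root $\sigma^{1/8}$ modulo $2$ (or modulo $\varpi^2$), because $a\equiv b \bmod 2$ implies $a^{8}\equiv b^{8}\bmod 16$. So it suffices to find an element $y\in S$ with $y^8\equiv t-1$, lying in $I$ together with its whole Frobenius-root tower, and then compute $\tilde y^{8}\bmod 8$ for any lift $\tilde y$.

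Second, we set up the algorithm of \Cref{section:algori}. We work in the finite-dimensional $\F_2$-subspace of $S$ spanned by monomials $x^{a}t^{b}$ with $a<1$ and $a,b\in 2^{-N}\Z$ for suitable bounds, where the exponent of $x$ controls the $2$-adic precision via $x\leftrightarrow \xi$-expressions of $\varpi$. Frobenius is $\F_2$-linear, so the condition "$y_{k+1}^2=y_k$ with each $y_k\in I$" becomes a linear system. The obstruction is that $I$ itself is not given finitely. We would handle this by using \Cref{prop:univ_eq_intro} only for existence: it guarantees that some solution exists. We then search for $y$ as an $\F_2$-solution of $y^8=t-1+(\text{terms killed modulo the precision})$ for which the square-root tower stays inside the ideal generated by the roots of $t-1$ and the nilpotents $x^{a}(\dots)$, truncated at the working precision.

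Third, the solver is run with $N=4$, which accounts for the appearance of $\xi_{16}$ and $t^{1/16}$. We lift the resulting $y$ by replacing $x^{1/2^k}$ with the appropriate expression in $\xi_{16}-1$ and compute $\tilde y^8 \bmod 8$. This yields the displayed expression, and we check the reduction modulo $2$ and the root-compatibility at that precision.

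I expect the main obstacle to be justifying that the truncation is faithful. We must show that the finite linear problem's solution genuinely extends to an element of $J$, rather than merely agreeing with one at finite level. The plan is to show that modulo $8$ the answer depends only on finitely many layers of the tower. We also need that the set of valid $\sigma \bmod 8$ is an affine coset of $(J\cap 2R)/8$-type corrections, which are controlled by the same finite computation. The explicit output is then certified by direct verification.
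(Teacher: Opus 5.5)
There is a genuine gap in your second step. Your overall architecture matches the paper: pass to $S$, find a root of $y=t-1$ in $I$, lift it, and raise it to a power. But you never obtain a testable description of the eidetic ideal $I$, and the substitute you propose is wrong.

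Two small corrections first. $I$ is the image $(J+2R)/2R$, not $J/\varpi^2J$. It is also the \emph{smallest} eidetic perfectoid ideal containing $y$; the largest is $S$ itself.

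The condition that the tower ``stays inside the ideal generated by the roots of $t-1$ and the nilpotents'' does not characterize $I$. Read literally, that ideal contains $y^{1/4}=t^{1/4}-1$, which is not in $I$. Read as ``roots lying in $I$'', it is circular. \Cref{prop:univ_eq_intro} only gives existence, not linear constraints.

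The missing ingredient is the known perfectoidization $\Z_2^{\cycl}\abracket{t^{1/2^\infty}}/(t-1)\to C(\Z_2,\Z_2^{\cycl})$, given by $t^{1/2^n}\mapsto (i\mapsto \xi_{2^n}^i)$ \cite[Proposition 3.18]{ishizuka2026perfectoidizaitonfinitealgebrasperfectoid}. Since this map is surjective with kernel $(t-1)_{\pfd}$, $I$ is the kernel of its reduction modulo $\xi_2-1=-2$. Concretely, $I$ is the preimage of $\ker E$, where $E$ sends $y^{1/2^n}$ to the function $i\mapsto (1+x^{1/2^{n-1}})^i-1$ on $\Z/2^n$. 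For example, $E(y^{1/4})$ takes the value $x^{1/2}\neq 0$ at $i=1$, which confirms $y^{1/4}\notin I$.

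On $A_{L,M}=\F_2[a,Y]/(a^{2^L},Y^{2^M})$, the map $E$ factors as $E_{L,M}$ followed by an injection. So membership in $I$ is \emph{exactly} the finite linear condition $E_{L,M}(f)=0$. The faithfulness-of-truncation problem you anticipate therefore does not arise.

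You also do not need to control the tower or any coset of corrections. Suppose $f_n\in I$ satisfies $f_n^{2^n}=y$, equivalently $f_n\equiv y^{1/2^n}\bmod x^{1/2^n}$.
\begin{itemize}
\item The lower roots are powers of $f_n$, hence in $I$.
\item The higher roots exist inside $I$ because $I^{[2]}=I$ (\Cref{theorem:eid-tilt-ideals}).
\item \Cref{lemma:approximations} then gives $\widetilde f_n^{\,2^n}\equiv\alpha\bmod 2^{n+1}$ for some $\alpha\in(t-1)_{\pfd}$ with compatible roots and $\alpha\equiv t-1\bmod 2$.
\end{itemize}
Hence a fourth root ($n=2$) already determines $\alpha$ modulo $8$, as your own estimate $a\equiv b\bmod 2\Rightarrow a^4\equiv b^4\bmod 8$ shows.

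Insisting on an eighth root is fatal in practice. The paper reports no $n=3$ solution for $p=2$ up to $M=8$, $L=7$ (\Cref{remark:other_results}). Since $A_{L,M}\subseteq A_{L',M'}$ compatibly with $E$ whenever $L\le L'$ and $M\le M'$, your search at $N=4$ would return nothing.

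Finally, the displayed congruence belongs to one specific solution among the $2^{80}$ at level $M=4$, $L=3$, and different $f_2$ give different $\alpha$. So the proof must exhibit that particular $f_2$ (the one in \Cref{prop:kernel_results_2_2}) and verify $E(f_2)=0$ and $f_2^4=y$. It must then expand a lift modulo $8$, substituting $x^{1/2^k}\mapsto\xi_{2^{k+1}}-1$. The claim ``this yields the displayed expression'' cannot come from an unspecified solution.
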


The calculations of the linear systems involved in determining $\sigma$ grow exponentially in complexity; see \Cref{remark:github}, \Cref{remark:number_of_solutions} and \Cref{remark:other_results} for an account.

\begin{notation}\label{notation:setup} Here is some notation that we use throughout the article.
\begin{itemize}
    \item We fix a prime number $p\in \N$.
    % \item When $R$ is a perfectoid ring, $\varpi\in R$ is an element such as in \Cref{notation:varpi}.
    \item If $R$ is a perfectoid ring (\Cref{def:pfd}), we denote by $\Pfd_R$ the category of $\varpi$-complete perfectoid $R$-algebras, where $\varpi$ is an element as in \Cref{notation:varpi}. Whenever we consider a perfectoid ring $R$, we implicitly choose such a $\varpi\in R$.
    \item If $S$ is a ring such that $pS=0$, then we denote by $F_S\colon S\to S$ the absolute Frobenius map.
    \item If $A$ is any ring and $a\in A$, then $A[a]=\{b \in A\mid ba=0\}$.
    \item When we write $\abracket{-}$, it means that we consider algebras implicitly completed in a topology that should be clear from context or that we will specify on the spot. The notation $\overline{(-)}$ stands for the closure of a subset in the implicit topology.
    \item We occasionally use (mostly for convenience) the language of animated rings (see, for example, \cite[Section 5.1.1]{cesnavicius2024purity}).
    \item $\xi_n$ denotes a primitive $n$-th root of unity.
\end{itemize}

\end{notation}

\subsection*{Acknowledgements} The author wants to thank Ryo Ishizuka for general discussions on the topic of the paper. The author also wants to thank Pierre-Louis Frabel for discussing the subject with him. The author was supported by FNS grant \#200021-231484.

\subsubsection*{AI disclosure} The SageMath software used to compute the results in \Cref{prop:kernel_results_2_2} was written with the help of GitHub Copilot, GPT-5.5 Sol and GPT-6.1 Sol. At the final stage of the paper, the author asked GPT-6.1 Sol to review the paper as a mathematical assistant; it found some imprecisions and asked for clarifications, which the author then addressed.

\section{Recollections on perfectoid rings and ideals}

Recall the following definition.

\begin{definition}\label{def:pfd} A ring $R$ is \emph{perfectoid} if it contains an element $\varpi\in R$ such that $\varpi^p \mid p$, the ring is $\varpi$-complete and Fontaine's $\theta$ map (\cite[Section 2.1]{FarguesFontaine2018})
\[\theta\colon A_{\inf}(R) \to R\]
is surjective with a principal kernel.
\end{definition}

\begin{notation}\label{notation:varpi} For any perfectoid ring $R$, an element $\varpi\in R$ will be implicitly taken to have the properties in \Cref{def:pfd}. Recall that, up to multiplication by a unit in $R$, any such element admits a compatible system of $p$-power roots (\cite[Lemma 3.9]{BMS18}). We usually assume that such a system has been chosen and that $\varpi$ is the first term of this sequence; in this case, we denote the system of compatible $p$-power roots of such an element by $\varpi^{\flat}\in R^{\flat}$. Note that the ideal $\varpi R$ is unambiguously defined.
Also, recall that we can always choose $\varpi$ (possibly generating a different ideal) such that $\varpi^p=pu$, where $u\in R$ is a unit. Therefore, in this article, one can always read $\varpi^p R$ as $pR$ in this important special case.
We also choose an element $d\in A_{\inf}(R)$ that generates the kernel of Fontaine's map $\theta_R\colon A_{\inf}(R)\to R$. We can choose $d$ such that $d=[\varpi^{\flat p}]b-p$ for $b\in A_{\inf}(R)$ and $\varpi^{\flat }\in R^{\flat}$; if $\varpi^p=pu$ for a unit $u\in R$, then $b$ can be chosen to be a unit.
    
\end{notation}

We take the opportunity to advertise the following alternative definition of perfectoid rings (\Cref{prop:pfd_Derived_Def}), which, at least to the author's knowledge, does not appear in the literature in this exact form involving derived quotients. Although the content of the statement appears in
\cite[Corollary 16.3.75]{gabber2018foundationsringtheory}, the nature of the proof there is different. We need the following technical lemma for the proof -- this \Cref{lemma:principal_miracle} is also a key tool for \Cref{proposition:eidtilt}, and was the original motivation for its formulation.

\begin{lemma}\label{lemma:principal_miracle}Let $T$ be an $\F_p$-algebra with an element $a\in T$ with the property that $a^p=0$ and
\[(-)^p\colon T/aT\to T\]
is an isomorphism. Let $a^{\flat}$ be any preimage of $a$ under $T^{\flat}\to T$. Then the kernel of the surjective map $T^{\flat}\to T$ is the principal ideal $a^{\flat p}T^{\flat}$.
\end{lemma}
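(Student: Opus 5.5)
The plan is to work directly with the description $T^{\flat}=\varprojlim_{F_T}T$, where elements are sequences $(x_0,x_1,\dots)$ with $x_{n+1}^p=x_n$, and the map $T^{\flat}\to T$ is $x\mapsto x_0$. Surjectivity is clear, since $F_T$ is surjective by hypothesis. Write $a^{\flat}=(a_0,a_1,a_2,\dots)$ with $a_0=a$ and $a_n^{p^n}=a$. The inclusion $a^{\flat p}T^{\flat}\subseteq\ker$ is immediate: the $0$-th coordinate of $a^{\flat p}$ is $a^p=0$. Everything therefore rests on the reverse inclusion. Concretely, given $x=(x_n)$ with $x_0=0$, we must produce $y=(y_n)\in T^{\flat}$ with $x_n=a_n^p y_n=a_{n-1}y_n$ for all $n\ge 1$.

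\emph{Step 1: describe the kernels of iterated Frobenius.} I claim that for every $n\ge1$,
\[\ker(F_T^n)=a_{n-1}T.\]
The inclusion $\supseteq$ holds because $a_{n-1}^{p^n}=a^p=0$. For $\subseteq$ I induct on $n$. The case $n=1$ is exactly the injectivity of $(-)^p\colon T/aT\to T$. For the induction step, suppose $F^n(z)=0$. Then $F^{n-1}(z^p)=0$, so $z^p\in a_{n-2}T=a_{n-1}^pT$. Since Frobenius is surjective, $z^p=a_{n-1}^pw^p$ for some $w$. Hence $(z-a_{n-1}w)^p=0$, which gives $z-a_{n-1}w\in aT\subseteq a_{n-1}T$. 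Now apply this to an element $x$ of the kernel. Since $x_n^{p^n}=x_0=0$, we get $x_n=a_{n-1}y'_n$ for some $y'_n\in T$.

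\emph{Step 2: make the choices compatible.} The elements $y'_n$ are only determined modulo $\mathrm{Ann}(a_{n-1})$, and a priori $y'^p_{n+1}\neq y'_n$. From $x_{n+1}^p=x_n$ we only get
\[a_{n-1}\bigl(y'^p_{n+1}-y'_n\bigr)=0.\]
My first attempt is to redefine the choices by $y_n:=y'^{p^k}_{n+k}$ for suitable $k$, which is again a valid factor because $a_{n-1}y'^{p^k}_{n+k}=(a_{n+k-1}y'_{n+k})^{p^k}=x_{n+k}^{p^k}=x_n$. The aim is then to show that the ambiguity dies: if $d\in\mathrm{Ann}(a_m)$, then suitable Frobenius powers of $d$ should vanish, or at least become independent of the choices. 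I would analyze $\mathrm{Ann}(a_m)$ via the isomorphism $T/aT\cong T$; for instance, writing $d=e^p$ shows that $a_md=0$ is equivalent to $a_{m+1}e\in aT$.

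\emph{Fallback for Step 2.} If this direct approach does not close up, I would instead use the pro-isomorphism of inverse systems
\[\varprojlim_{F}T\cong\varprojlim_{F}T/aT,\]
which comes from the factorization $T\to T/aT\xrightarrow{\sim}T$ of Frobenius. In the second model, the coordinates of $y$ can be defined canonically, one at a time, by inverting the isomorphism $(-)^p\colon T/aT\to T$. The compatibility $y_{n+1}^p=y_n$ is then automatic, and one checks $a^{\flat p}y=x$ coordinatewise.

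I expect Step 2, arranging the factors $y_n$ into a genuinely compatible system, to be the main obstacle. Step 1 is a routine induction.
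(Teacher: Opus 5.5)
Your Step 1 is correct; it sharpens the paper's base case $\ker F_T=aT$. The gap is Step 2, which is the whole content of the lemma: it is not carried out, and the mechanisms you suggest for it fail as stated.

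The ambiguity need not die. Take $T=\F_p[s^{1/p^{\infty}}]/(s)\times P$ with $P$ a nonzero perfect $\F_p$-algebra and $a=(s^{1/p},0)$. The hypotheses hold, and every preimage $a^{\flat}$ has $P$-component $0$, so $0\times P\subseteq\mathrm{Ann}(a_m)$ for every $m$. No Frobenius power of this part of the ambiguity vanishes, and the $P$-component of $y'^{p^k}_{n+k}$ is simply whatever you chose. Here $a^{\flat p}$ is a zero-divisor in $T^{\flat}$, so $y$ is genuinely non-unique, and its coordinates cannot be determined ``canonically, one at a time'' from $x$. The fallback does not remove the choice either. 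In $\varprojlim_F T/aT$ the transition map is the Frobenius of $T/aT$, whose kernel $a_1T/aT$ is in general nonzero. The condition you must meet, $\bar x_m=\bar a_{m-1}\bar y_m$ in $T/aT$, is the same division problem as before, only modulo $a$.

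What is missing is a genuine existence argument, and there are two ways to supply it.

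\emph{The paper's route (successive approximation).} The paper avoids choosing coordinates. Pick $w_1\in T^{\flat}$ whose first coordinate $\sigma$ satisfies $x_1=a\sigma$; then $z_1:=x-a^{\flat p}w_1$ has vanishing first coordinate. At each stage $z_N^{1/p^N}$ lies in the kernel. Applying the same step to it and raising back to the $p^N$-th power shows that the next correction is a multiple of $a^{\flat p^{N+1}}=a^{\flat p}\cdot a^{\flat(p^{N+1}-p)}$. Since $a^{\flat(p^{N+1}-p)}\to 0$ in the inverse-limit topology, for which $T^{\flat}$ is complete, the corrections sum to some $\alpha$ with $x=a^{\flat p}\alpha$.

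\emph{Repairing your coordinatewise route (Mittag-Leffler).}
\begin{enumerate}
\item Put $S_n=\{y\in T: a_{n-1}y=x_n\}$. This is a coset of $\mathrm{Ann}(a_{n-1})$, nonempty by Step 1, with $F(S_{n+1})\subseteq S_n$.
\item One has $F^2(\mathrm{Ann}(a_{m+1}))=F(\mathrm{Ann}(a_m))$. Indeed, if $f=g^p$ with $a_mf=0$, then $a_{m+1}g=at$ for some $t$. Write $a=a_1a_{m+1}c$ with $c=a_{m+1}^{p^m(p-1)-1}$. Then $g-a_1ct$ is killed by $a_{m+1}$ and has the same $p^2$-th power as $g$, because $a_1^{p^2}=0$.
\item Consequently, for $k\geq 1$ the sets $F^k(S_{n+k})$ are nested cosets of the single group $F(\mathrm{Ann}(a_n))$. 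Hence they all equal $S'_n:=F(S_{n+1})$, and $F\colon S'_{n+1}\to S'_n$ is surjective.
\item Choose $y_1\in S'_1$ and successive $p$-th roots $y_{n+1}\in S'_{n+1}$; this gives the required compatible system.
\end{enumerate}

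Without one of these two arguments, your proposal only shows $x_n\in a_{n-1}T$ coordinate by coordinate. That does not by itself give divisibility by $a^{\flat p}$ in $T^{\flat}$.
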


\begin{proof}
Recall that the map $T^{\flat}\to T$ is surjective because $T$ is semiperfect. 
Denote $K=\ker(T^{\flat}\to T)$. If $y=(y_n)\in K$, it simply means that $y_0=0$. We show below the non-trivial inclusion $K\subset a^{\flat p}T^{\flat}$.

We show by induction on $N\geq 1$ that there exist elements $x_n,z_n\in T^{\flat}$ with the property that
\[y=a^{\flat p}\left(\sum_{n=1}^N(a^{\flat})^{p^{n}-p}x_n\right)+z_{N}\] 
and $(z_N)_N=0$. Here, the notation $(-)_N$ applied to elements of $T^{\flat}$ denotes taking the $N$-th coordinate.

\begin{itemize}
    \item For $N=1$, as $y_1^p=0$, we have $y_1=a\sigma$ for some $\sigma\in T$. Let $x_1$ be any lift of $\sigma$ in $T^{\flat}$. Define $z_1:=y-a^{\flat p}x_1$. By construction, $(z_1)_1=y_1-a\sigma=0$.
    \item  Now we conduct the induction step. As $(z_N)_N=0$, we have $z_N^{1/p^N}\in K$. Therefore, we can treat it as $y$ in the preceding argument to get some $\widetilde{x}_{N+1},\widetilde{z}_{N+1}\in T^{\flat}$ with $z_N^{1/p^N}=a^{\flat p}\widetilde{x}_{N+1}+\widetilde{z}_{N+1}$ with $(\widetilde{z}_{N+1})_1=0$. We take this to the power $p^N$ to get, with $x_{N+1}=\widetilde{x}_{N+1}^{p^N}$ and $z_{N+1}=\widetilde{z}_{N+1}^{p^N}$
    $$z_N=a^{\flat p^{N+1}}x_{N+1}+z_{N+1}.$$
    Note that $(z_{N+1})_{N+1}=0$ by construction.
\end{itemize}

Now note that both sequences $\left(\sum_{n=1}^N(a^{\flat})^{p^{n}-p}x_n\right)_{N\geq 1}$ and $(z_n)_{n\geq 1}$ are Cauchy for the inverse limit topology
\footnote{A sequence $(s_n)_{n\geq 1}$ of elements of $T^{\flat}$ converges to zero in the inverse limit topology if for any $M\in \N$ there exists an $L\in\N$ with $(s_{n})_M=0$ for any $n\geq L$. Note that then $(s_n)_{M'}=0$ for any $0\leq M'\leq M$ as $(s_n)_{M'}=(s_n)_{M}^{p^{M-M'}}$. Note therefore that by induction, a Cauchy sequence in this topology defines a unique element of the tilt, showing that the tilt is complete with respect to this uniform structure.} on $T^{\flat}$ with $\lim_{n\to \infty}z_n=0$. If $\alpha:=\sum_{n=1}^{\infty}(a^{\flat})^{p^{n}-p}x_n$, then $y=a^{\flat p}\alpha$, which proves the desideratum.
% % Namely, as $$(-)^p\colon R/\varpi R\to R/\varpi^{p}R$$ induces an isomorphism $$(-)^p\colon R/\varpi^{1/p^n}\to R/\varpi^{1/p^{n-1}}R$$ for every $n\in \N$, the above claim follows.

% Note that $$(-)^p\colon R/\varpi R\to R/\varpi^{p}R$$ is an isomorphism, there is an induced isomorphism $$(-)^p\colon R/\varpi^{1/p}\to R/\varpi R.$$ Therefore, we have $y_1=\varpi^{1/p}\sigma$ for some $\sigma \in R/\varpi R$. Lift $\sigma$ to a $x_1\in R^{\flat}$ along the surjection $R^{\flat}\to R/\varpi R$. Now $(y-\varpi^{\flat}x_1)_1=0$. Therefore, if $z_1:=y-\varpi^{\flat}x_1$ we have that $z_1^{1/p}\in K$. Applyng again the exact same process gives by induction elements $x_n,z_n\in R^{\flat}$ for $n\in \N$ such that for any $N\in \N$ we have

% with the property that $(z_N)_N=0$. Note that both $\left(\sum_{n=1}^N\varpi^{\flat p^n -1}x_n\right)$ and $(z_n)$ converges in the inverse limit topology on $R^{\flat}$ with $\lim_{n\to \infty}z_n=0$. If $\alpha=\sum_{n=0}^{\infty}x_n\varpi^{\flat p^n -1}$ then $y=\varpi^{\flat}\alpha$, concluding the proof as desired.

\end{proof}

\begin{proposition}[Equivalent definition of perfectoids]\label{prop:pfd_Derived_Def} Let $R$ be a ring. Then the following conditions are equivalent.
\begin{enumerate}
    \item $R$ is perfectoid.
    \item \begin{enumerate}
        \item[(i)] There exists $\varpi\in R$ such that $\varpi^p \mid p$ and 
         \[(-)^p\colon R/^L\varpi R\to R/^L\varpi^p R\]
    induces an isomorphism of animated rings on derived quotients,
    \item[(ii)] and $R$ is $\varpi$-(derived) complete.
    \end{enumerate}
\end{enumerate}
    
\end{proposition}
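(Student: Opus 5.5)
We prove both directions by comparing $R$ with $A_{\inf}(R)/d$ through derived quotients by $[\varpi^{\flat}]$, or a power of it. Since we have a map $A_{\inf}(R)\to R$ sending $[\varpi^{\flat}]$ to $\varpi$, we get induced maps $A_{\inf}(R)/^L(d,[\varpi^{\flat}]^k)\to R/^L\varpi^k$. We will also use the known non-derived criterion \cite[Lemma 3.10]{BMS18}: for a $\varpi$-complete $R$ with $\varpi^p\mid p$, $R$ is perfectoid if and only if $(-)^p\colon R/\varpi R\to R/\varpi^pR$ is an isomorphism. This identifies condition (2)(i) on $\pi_0$ with perfectoidness. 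The real content is therefore the statement about $\pi_1$, which is the map $R[\varpi]\to R[\varpi^p]$, $x\mapsto x^p$.

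\textbf{(1)$\Rightarrow$(2).} Write $R=A_{\inf}(R)/d$ with $d=[\varpi^{\flat p}]b-p$ as in \Cref{notation:varpi}.
\begin{itemize}
\item $A_{\inf}(R)$ is $p$-torsion free, and modulo $[\varpi^{\flat}]$ we have $d\equiv -p$. Hence
\[R/^L\varpi\simeq A_{\inf}(R)/^L(d,[\varpi^{\flat}])\simeq A_{\inf}(R)/^L(p,[\varpi^{\flat}])\simeq R^{\flat}/^L\varpi^{\flat}.\]
\item In the same way, $R/^L\varpi^p\simeq R^{\flat}/^L\varpi^{\flat p}$, because $d\equiv -p$ modulo $[\varpi^{\flat}]^p$ as well.
\item Under these identifications, the map $(-)^p$ becomes the map $R^{\flat}/^L\varpi^{\flat}\to R^{\flat}/^L\varpi^{\flat p}$ induced by the Frobenius of $R^{\flat}$. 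This Frobenius is an automorphism carrying $\varpi^{\flat}$ to $\varpi^{\flat p}$, so the induced map is an equivalence.
\item The one point to check is compatibility of the identifications with $(-)^p$. For this I would observe that all these animated rings live over $\F_p$ (they receive a map from $A_{\inf}(R)/^Lp$), and that the Frobenius is functorial.
\item Derived $\varpi$-completeness is immediate, since $R$ is classically $\varpi$-complete.
\end{itemize}

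\textbf{(2)$\Rightarrow$(1).} Set $T=R/\varpi^pR$ and $a=\varpi$.
\begin{itemize}
\item The $\pi_0$ part of (i) says exactly that $(-)^p\colon T/aT\to T$ is an isomorphism, and clearly $a^p=0$.
\item \Cref{lemma:principal_miracle} then gives $\ker(T^{\flat}\to T)=a^{\flat p}T^{\flat}$.
\item By $\varpi$-completeness, $T^{\flat}=R^{\flat}$. Surjectivity of $T^{\flat}\to T$, together with completeness, gives surjectivity of $\theta$.
\item Since $p\in\varpi^pR$, we can write $p=\theta([\varpi^{\flat p}]b)$ for some $b$. Put $d:=[\varpi^{\flat p}]b-p\in\ker\theta$.
\end{itemize}
It remains to show that $A_{\inf}(R)/d\to R$ is an isomorphism. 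Both sides are derived $[\varpi^{\flat}]$-complete, so by derived Nakayama it suffices to check that
\[R^{\flat}/^L\varpi^{\flat p}\simeq A_{\inf}(R)/^L(d,[\varpi^{\flat}]^p)\longrightarrow R/^L\varpi^p\]
is an equivalence.
\begin{itemize}
\item On $\pi_0$ this map is $R^{\flat}/\varpi^{\flat p}\to T$, which is an isomorphism by \Cref{lemma:principal_miracle}.
\item On $\pi_1$ it is the map $R^{\flat}[\varpi^{\flat p}]\to R[\varpi^p]$, which I expect to be the main obstacle. Here I would use the $\pi_1$ part of (i): the map $R[\varpi]\to R[\varpi^p]$, $x\mapsto x^p$, is bijective. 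Iterating it produces, for every $x\in R[\varpi^p]$, a compatible system of $p$-power roots lying in the torsion submodules. Via the identification of $R^{\flat}$ with $\varprojlim_{x\mapsto x^p}R$, this system is an element of $R^{\flat}[\varpi^{\flat p}]$, which gives surjectivity.
\item For injectivity, an element of the kernel has all coordinates determined, through the bijections, by its $0$-th coordinate, which is $0$; so the element itself is $0$.
\item Finally, I would double-check that the $\pi_1$ map really corresponds to the first-coordinate projection under these identifications. I would compute it by the same Koszul-complex bookkeeping as in (1)$\Rightarrow$(2), now run in reverse.
\end{itemize}
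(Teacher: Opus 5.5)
Your proof is correct in outline, but several steps need repair.

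\textbf{Comparison with the paper.} Your (2)$\Rightarrow$(1) has the same skeleton as the paper's proof: \Cref{lemma:principal_miracle} for $\pi_0$, the $\pi_1$-part of (i) to match torsion in $R^\flat$ with torsion in $R$, the element $d=[\varpi^{\flat p}]b-p$, and derived Nakayama. Your variants buy something.

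\begin{itemize}
\item Taking $T=R/\varpi^pR$ and $a=\varpi$ uses the $\pi_0$ hypothesis verbatim. The paper first extracts $\varpi^{1/p}$ via \cite[Lemma 3.9]{BMS18} and works with $T=R/\varpi R$.
\item You compute $\pi_1$ of $A_{\inf}(R)/^L(d,[\varpi^\flat]^p)\simeq R^\flat/^L\varpi^{\flat p}$ directly. The check you postponed does go through: the class of $y\in R^\flat[\varpi^{\flat p}]$ is the Koszul cycle $(0,[y])$, which is also a cycle for $(d,[\varpi^\flat]^p)$. So the map is exactly $y\mapsto y^\sharp$.
\item The paper needs $d$ distinguished and $A_{\inf}(R)/d$ perfectoid in order to quote \cite{cesnavicius2024purity} for its torsion. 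Run on $A_{\inf}(R)/^Ld\to R$, your Nakayama argument needs neither, and it even shows that $d$ is a non-zero-divisor.
\end{itemize}

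For (1)$\Rightarrow$(2) the paper simply cites \cite{BMS18} and \cite{cesnavicius2024purity}. Your identification of $(-)^p$ with the Frobenius of $R^\flat$ is more conceptual, but it is where your write-up is thinnest.

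\textbf{The criterion in your first paragraph is false as stated.} Take $R=\F_p[t^{1/p^\infty}]/(t^{p+1/p})$ with $\varpi=t$. Then $R$ is $t$-complete and $(-)^p\colon R/t\to R/t^p$ is the isomorphism $\F_p[t^{1/p^\infty}]/(t)\to\F_p[t^{1/p^\infty}]/(t^p)$. Yet $R$ is not perfect: Frobenius kills $t^{1+1/p^2}\neq 0$. Correspondingly, $(-)^p\colon R[t]\to R[t^p]$ kills $t^{p-1+1/p}\neq 0$. So the $\pi_0$ condition does not characterize perfectoid rings, and the $\pi_1$ condition is essential in (2)$\Rightarrow$(1) as well. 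Your actual argument never uses the false claim, so only the framing needs to change.

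\textbf{The compatibility in (1)$\Rightarrow$(2) is not justified.} Saying that everything lives over $\F_p$ and Frobenius is functorial does not identify $(-)^p$. This map is not the Frobenius of anything (\Cref{remark:derived_p_power}). Moreover, your equivalences $R/^L\varpi^k\simeq R^\flat/^L\varpi^{\flat k}$ come from the congruence $d\equiv-p$, not from a map compatible with the $D$-algebra structures that define $(-)^p$. Two ways to fix this:
\begin{itemize}
\item View $A_{\inf}(R)$ as a $\Z[t,s]$-algebra via $t\mapsto[\varpi^\flat]$, $s\mapsto b$, so that $R\simeq A_{\inf}(R)\otimes^L_{\Z[t,s]}D$ as $D$-algebras. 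For a polynomial $\Z[t,s]$-algebra $P$ one has $(P\otimes D)/t=P/(p,t)$ and $(P\otimes D)/t^p=P/(p,t^p)$, and $(-)^p$ is the map induced by $F_{P/p}$. By animation, $(-)^p$ on $R$ is then the map induced by $F_{R^\flat}$.
\item Alternatively, check $\pi_0$ and $\pi_1$ directly. Your Teichm\"uller computation shows that $\sharp\colon R^\flat[\varpi^{\flat k}]\to R[\varpi^k]$ is bijective for perfectoid $R$, and $\sharp$ is multiplicative.
\end{itemize}

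\textbf{Surjectivity on $\pi_1$ needs one more step.} The system $y=(x,x_1,x_2,\dots)$ with $x_n\in R[\varpi]$ lies in $R^\flat[\varpi^{\flat p}]$ only if $\varpi^{1/p^{n-1}}x_n=0$ for $n\geq1$, which is stronger than $\varpi x_n=0$. It does follow from injectivity in (b). For $z\in R[\varpi]$ put $w=\varpi^{1/p^m}z$. Then $w\in R[\varpi]$ and, up to a unit, $w^{p^m}=\varpi z^{p^m}=0$, so iterating injectivity gives $w=0$. Your injectivity argument is fine, since $\varpi^{1/p^{n-1}}y_n=0$ already implies $y_n\in R[\varpi]$.

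\textbf{Classical versus derived completeness.} Hypothesis (ii) only gives derived completeness. However, $T^\flat=R^\flat=\varprojlim_{x\mapsto x^p}R$, the surjectivity of $\theta$, and the definition of perfectoid all use classical $\varpi$-completeness. As in the paper, injectivity in (b) gives $R[\varpi^\infty]=R[\varpi]$, and with this bounded torsion the two notions of completeness coincide.
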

\begin{proof}Note that derived $\varpi$-completeness and classical $\varpi$-completeness are the same in the context of this proposition because of the bounded $\varpi$-torsion of $R$, which we will see holds.
    
    To check that this map of animated rings is an isomorphism, it suffices to check that it is an isomorphism on $\pi_1$ and $\pi_0$, as the rings are $1$-truncated. Here this translates to

\begin{enumerate}
    \item[\textit{(a)}] $(-)^p\colon R/\varpi R\to R/\varpi^p R$
    \item[\textit{(b)}] $(-)^p\colon R[\varpi]\to R[\varpi^p]$
\end{enumerate}
being isomorphisms -- see \Cref{remark:derived_p_power} for a precise account of the matter. For the implication from (1) to (2), we refer to \cite[Lemma 3.10]{BMS18} for $(a)$ and to the end of \cite[Section 2.1.2]{cesnavicius2024purity} for $(b)$.

We now prove the converse direction. Note that from (a) we get the surjectivity of the Frobenius on $R/pR$. We can then apply \cite[Lemma 3.9]{BMS18} to replace $\varpi$ by a unit multiple admitting compatible $p$-power roots that we denote by $\varpi^{\flat}\in R^{\flat}$; note that the statements are insensitive to this change.

First, we claim that if $(a)$ holds then the untilt map
\begin{equation*}
R^{\flat}/\varpi^{\flat}R^{\flat}\to R/\varpi R
\end{equation*}

is an isomorphism. Showing that $R^{\flat}/\varpi^{\flat} R^{\flat}\to R/\varpi R$ is an isomorphism is exactly showing that the kernel of the projection $\lim_{x\mapsto x^p}R/\varpi R=R^{\flat}\to R/\varpi R$ is the principal ideal $\varpi^{\flat}R^{\flat}$. To this end, note that \Cref{lemma:principal_miracle} implies the desideratum for $T=R/\varpi R$ and $a=\varpi^{1/p}$.

% % Also, if $(b)$ holds, we have for every $n\in \Z$ that
% \[R[\varpi^{1/p^n}]=R[\varpi].\]
We now explore the consequences of (b).
Say that for some $m\geq 2$ we have $\varpi^m x=0$ for some $x\in R$. Then $\varpi\varpi^{m-1}x=0$. Then $\varpi^{m-1}x\in R[\varpi]$. But also $(\varpi^{m-1}x)^p=0$ as $p\geq 2$. So by the injectivity of $R[\varpi]\to R[\varpi^p]$, we have $\varpi^{m-1}x=0$. Now we can repeat the argument until we reach $\varpi x=0$. Therefore, $(b)$ can now be understood as saying, if $R[\varpi^{\infty}]:=\bigcup_{m\geq1}R[\varpi^m]$, that
\[R[\varpi^{\infty}]\xrightarrow{(-)^p}R[\varpi^{\infty}]\]
is an isomorphism.

We now claim that the following untilt map (which in this case is a morphism of $p$-torsion abelian groups)
\[R^{\flat}[\varpi^{\flat}]\to R[\varpi]\]
is an isomorphism. Namely, we can see that it is injective: if $(x_n)\in R^{\flat}[\varpi^{\flat}]$ is sent to zero, it means that $x_0=0$. But then $x_{n}^{p^n}=0$ and, as $x_n\in R[\varpi]$, we have $x_n=0$ by hypothesis. That the map is surjective is also a direct consequence of the surjectivity in $(b)$ and the induced surjectivity for every $n\in \N$ of $(-)^p\colon R/\varpi^{1/p^n}\to R/\varpi^{1/p^{n-1}}R$.

Now we can conclude: namely, take an element $d$ in the kernel of $A_{\inf}(R)\to R$ as in \Cref{notation:varpi}. Namely, as $R/pR$ is semiperfect, we know that this map is surjective. If $\varpi^p\beta=p$, take a lift $b\in A_{\inf}(R)$ of $\beta$. Let $d=[\varpi^{\flat p}]b-p$. This element is distinguished (to see this, perform the computation of the difference in $W_2$ and use the $\varpi^{\flat}$-completeness of $R^{\flat}$) and as $R^{\flat}$ is $\varpi^{\flat}$-complete, the ring $A_{\inf}(R)/d$ is perfectoid by \cite[Equation 2.1.2.1]{cesnavicius2024purity}.

Now we want to argue that
\begin{equation}\label{equation:Ainfcomparision}
    A_{\inf}(R)/(d)\to R
\end{equation}
is an isomorphism. Note that the tilt of the perfectoid ring $A_{\inf}(R)/(d)$ is $R^{\flat}$. So the $\varpi$-torsion of this ring is identified with $R^{\flat}[\varpi^{\flat}]$ by the end of \cite[Section 2.1.2]{cesnavicius2024purity}.

Note that in the map of \Cref{equation:Ainfcomparision}, both rings are $[\varpi^{\flat}]$-complete (seen as $A_{\inf}(R)$-algebras, say) and therefore we can check that the map is an isomorphism using derived Nakayama. But on $\pi_0$ the derived quotient by $[\varpi^{\flat}]$ is 
\[R^{\flat}/\varpi^{\flat}R^{\flat}\to R/\varpi R\]
which we showed first was an isomorphism and on $\pi_1$ of the derived quotient we get 
\[R^{\flat}[\varpi^{\flat}]\to R[\varpi]\]
which we also showed was an isomorphism. 
\end{proof}

\begin{rem}\label{remark:derived_p_power}In the context of \Cref{prop:pfd_Derived_Def}, we explain precisely the map
 \[(-)^p\colon R/^L\varpi R\to R/^L\varpi^p R\]
 on animated quotients, and its functoriality. Note that \emph{this is not} the absolute Frobenius on animated $\F_p$-algebras (the target is not the same as the source). Let 
 $$D=\Z[t,s]/(t^ps-p).$$
 Note that $t$ is a non-zero-divisor in this ring. Note that choosing a $\varpi\in R$ with $\varpi^p$ dividing $p$ gives a map $D\to R$ sending $t\mapsto \varpi$ and $s$ to an appropriate element in $R$. We fix such a $D$-algebra structure. We consider $\Ani_D$, the $\infty$-category of animated $D$-algebras. For any polynomial $D$-algebra $P$, we can consider the quotients $P/tP$ and $P/t^pP$ which are both $\F_p$-algebras. We can animate the functor (see \cite[Section 5.1.4]{cesnavicius2024purity}) to get the base change functors
 \[(-/^L t),(-/^Lt^p)\colon \Ani_D\to \Ani_{\F_p}.\]
 Now, for each polynomial $D$-algebra $P$, we consider the $\F_p$-algebra map $(-)^p\colon P/tP\to P/t^pP$. Animating again, this defines a natural transformation $(-)^p\colon (-/^L t)\to (-/^Lt^p)$ as desired. 

The fact that this gives the ordinary $p$-power map on associated discrete quotients follows because when Kan extending the above functor defined on polynomial algebras to discrete algebras $\Alg_D\to \Alg_{\F_p}$ this retrieves the ordinary quotients and the usual $p$-power map. 
 
 To see the effect on $\pi_1$, consider $E=D[X]/(tX)$. An element $x\in R[\varpi]$ corresponds to a $D$-algebra map $E\to R$ sending $X\mapsto x$. By functoriality, it suffices to prove that the induced map $\pi_1(E/^Lt)=E[t]\to E[t^p]=\pi_1(E/^Lt^p)$ sends $X$ to $X^p$. Note that $E$ is 
%  we have $E=D[X]\otimes^L_{D[Z],0,tX}D$, 
 presented by the Koszul complex $D[X]\xrightarrow{tX}D[X]$. Base changing $E$ to $(-/^L t)$ and $(-/^Lt^p)$ gives the quasi-ideal (see \cite{drinfeld2021notionringgroupoid}) description of the corresponding animated algebra map $(-)^p$, where $\widetilde u$ is a lift of $u$ to $(D/t^pD)[X]$.
\[\begin{tikzcd}
{D/tD[X]} \arrow[d, "u\mapsto(tX)^{p-1}\widetilde u^{\,p}"] \arrow[r, "0"] & {D/tD[X]} \arrow[d, "(-)^p"] \\
{D/t^pD[X]} \arrow[r, "tX"]                         & {D/t^pD[X]}                 
\end{tikzcd}\]
The identification of the kernels of the horizontal complexes with $E[t]$ and $E[t^p]$ is given by 
\[D/tD[X]\xrightarrow{\cdot X} E[t]\]
 and
 \[t^{p-1}(D/t^pD[X])\xrightarrow{\cdot\frac{X}{t^{p-1}}}E[t^p].\]
Following the isomorphisms, we see that $X\mapsto X^p$ as desired.

\end{rem}

This paper deals with the following topic.

\begin{definition}\label{definition:perfectoid_ideal}Let $R$ be a perfectoid ring. An ideal $J\subseteq R$ is called \emph{perfectoid} if $R/J$ is a perfectoid ring.
\end{definition}

\begin{rem}\label{remark:perfectoidization_semipfd}Thanks to Bhatt's surjectivity theorem \cite[Proposition 7.11]{bhatt2022Prismsa}, we can see that for any ideal $I\subseteq R$, the smallest perfectoid ideal containing $I$, denoted by $I_{\pfd}$, has the property that $R\to R/I_{\pfd}$ is initial among maps to perfectoid rings that send $I$ to zero. See \Cref{lemma:initexist} and \Cref{remark:perfectoidization_of_ideals} for details on this reasoning. A great deal of this article is devoted to understanding the structure of $I_{\pfd}$ for any ideal $I\subseteq R$.
\end{rem}
 
\section{Perfectoid ideals modulo \texorpdfstring{$p$}{p}}\label{section:pfdideals_modp}

In this section, the goal is to prove \Cref{theorem:eid-tilt-ideals}. Throughout this section, $R$ is a fixed perfectoid ring and $\varpi$ is as in \Cref{notation:varpi}.

\subsection{Tilting equivalence}

In this section, we recall a form of the tilting equivalence in \Cref{proposition:eidtilt}. See \cite[Proposition 2.1.9]{cesnavicius2024purity} for the usual tilting equivalence and \cite[Proposition 3.2.6]{bhatt2025Aspects} for a general statement. The difference between \Cref{proposition:eidtilt} and \cite[Proposition 3.2.6]{bhatt2025Aspects} is that we are taking the underived quotient by $\varpi$, which is different in the presence of torsion. We give a proof with no mention of deformation theory. The two keys are the fact that the tilt is actually determined by the classical quotient by $\varpi^p$ and \Cref{lemma:principal_miracle}.

For the sake of clarity in this article, we give a name to the target category. This category captures perfectoid rings "purely algebraically" in the sense that rings are not required to be complete with respect to any topology in this category.

\begin{definition}\label{definition:eid} A ring $S$ is \emph{eidetic\footnote{ The name is loosely inspired by \href{https://en.wikipedia.org/wiki/Eidetic_reduction}{eidetic reduction}.} perfectoid} if $pS=0$ and there exists an $\alpha\in S$ such that $\alpha^p=0$ and
\[(-)^p\colon S/\alpha S\to S\]
is an isomorphism.
The \emph{category of eidetic perfectoid rings} has eidetic perfectoid rings as objects; its morphisms are ring maps $S\to S'$ such that $S'$ is \emph{relatively perfect} as an $S$-algebra. We will denote it by $\Eid$. If $S$ is eidetic perfectoid, then we denote by $\Eid_S$ the coslice category, whose objects we call eidetic perfectoid $S$-algebras.
    
\end{definition}

\begin{example}\label{example:eid} Let $R$ be a perfectoid ring and $\varpi$ as in \Cref{notation:varpi} with $\varpi^p=pu$ for a unit $u\in R$. Then $S=R/pR$ is eidetic perfectoid, $\alpha$ being the image of $\varpi$ in the quotient.  We will see that any eidetic perfectoid ring is of this form in \Cref{proposition:eidtilt}. As a trivial example, if $R$ is perfect in characteristic $p$, then taking $\alpha=0$ works; the category of perfect $\F_p$-algebras is equivalent to the category $\Eid_{\F_p}$.    
\end{example}

Note the following.

\begin{lemma}\label{lemma:relperf}Let $S$ be an eidetic perfectoid ring and $S\to S'$ be a map. The following are equivalent.
\begin{enumerate}
    \item The $S$-algebra $S'$ is relatively perfect, meaning that the relative Frobenius $F_{S'\mid S}$ is an isomorphism.
    \item As a ring, $S'$ is eidetic perfectoid and $\ker(F_{S'})=\alpha S'$ for any $\alpha\in S$ with $\ker(F_S)=\alpha S$.
\end{enumerate}
    
\end{lemma}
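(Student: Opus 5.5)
The plan is to identify the relative Frobenius of $S'$ over $S$ explicitly with the $p$-power map $S'/\alpha S'\to S'$. Once this is done, both conditions become the same statement.

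First I record what the hypothesis on $S$ gives. Since $(-)^p\colon S/\alpha S\to S$ is an isomorphism, the absolute Frobenius $F_S$ is surjective and $\ker(F_S)=\alpha S$. So the ideal $\alpha S$ is intrinsic: it is $\ker(F_S)$, independently of the choice of $\alpha$. This justifies the phrase ``for any $\alpha\in S$ with $\ker(F_S)=\alpha S$'' in (2). Conversely, any such $\alpha$ satisfies $\alpha^p=0$, and the induced map $(-)^p\colon S/\alpha S\to S$ is an isomorphism. Hence any such $\alpha$ also witnesses that $S$ is eidetic perfectoid. Note also that $pS'=0$ holds automatically, since $S'$ is an $S$-algebra.

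Next I compute the Frobenius twist $S'^{(1)}=S\otimes_{F_S,S}S'$. Here $S$ is regarded as an $S$-algebra via $F_S$. As $F_S$ is surjective with kernel $\alpha S$, this $S$-algebra is isomorphic to $S/\alpha S$. Therefore
\[S\otimes_{F_S,S}S'\cong (S/\alpha S)\otimes_S S'= S'/\alpha S'.\]
Concretely, an element $b^p\otimes s'$ corresponds to the class of $bs'$. Under this identification, the relative Frobenius $F_{S'\mid S}\colon b^p\otimes s'\mapsto b^p s'^p=(bs')^p$ becomes the map $(-)^p\colon S'/\alpha S'\to S'$. This map is well defined because $\alpha^p=0$ in $S'$. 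The only point needing care is checking that this identification is compatible with the ring structures. This is routine, and I expect no real obstacle.

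Finally I conclude both implications. If (1) holds, then $(-)^p\colon S'/\alpha S'\to S'$ is an isomorphism. Since the image of $\alpha$ in $S'$ satisfies $\alpha^p=0$ and $pS'=0$, this says exactly that $S'$ is eidetic perfectoid, witnessed by the image of $\alpha$. Injectivity of this map gives $\ker(F_{S'})=\alpha S'$, and this holds for every $\alpha\in S$ with $\ker(F_S)=\alpha S$. Conversely, assume (2). Then $\ker(F_{S'})=\alpha S'$ gives injectivity of $(-)^p\colon S'/\alpha S'\to S'$. Surjectivity of $F_{S'}$ follows from $S'$ being eidetic perfectoid, since its $p$-power map from a quotient of $S'$ onto $S'$ is an isomorphism. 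Hence $(-)^p\colon S'/\alpha S'\to S'$ is an isomorphism, which by the computation above means $F_{S'\mid S}$ is an isomorphism.
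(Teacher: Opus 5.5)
Your proposal is correct and follows the paper's proof. Like the paper, you identify the $S$-algebra $F_S\colon S\to S$ with the quotient $S\to S/\alpha S$, so that $F_{S'\mid S}$ becomes $(-)^p\colon S'/\alpha S'\to S'$ and the equivalence is immediate. You also spell out two points the paper leaves implicit: that $\alpha S=\ker(F_S)$ does not depend on the choice of $\alpha$, and that $F_{S'}$ is surjective in the direction (2)$\Rightarrow$(1).
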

\begin{proof}Recall that the relative Frobenius is defined as the unique map from the ring pushout
\[\begin{tikzcd}
S \arrow[d, "(-)^p"'] \arrow[r]     & S' \arrow[d] \arrow[rd, "(-)^p", bend left]              &    \\
S \arrow[r] \arrow[rr, bend right] & {S'\otimes_{S,(-)^p}S} \arrow[r, "F_{S'\mid S}", dashed] & S'
\end{tikzcd}\]
As $S$ is eidetic perfectoid, we can take some $\alpha\in S$ with $(-)^p\colon S/\alpha S\to S$ being an isomorphism and we can understand the $S$-algebra $S\xrightarrow{(-)^p}S$ as the quotient map $S\to S/\alpha S$ with inverse given by $s_1\otimes s_2\to s_1s_2^{1/p}$ -- the $1/p$-root is well-defined because the quotient is modulo the kernel of the Frobenius. Therefore we can rewrite the relative Frobenius $F_{S'\mid S}$ as the $p$-power map
\[(-)^p\colon S'/\alpha S'\to S'.\]
Now the equivalence in the statement is clear.
\end{proof}

% \begin{example}\label{example:eid} Let $R$ be a perfectoid ring. Let $\varpi\in R$ be an element such that $\varpi^p=pu$ where $u\in R^{\times}$. Then $S=R/pR$ is eidetic perfectoid, $\alpha$ being the image of $\varpi$ in the quotient.  We will see that any eidetic perfectoid ring is of this form in \Cref{proposition:eidtilt}. As a trivial example if $R$ is perfect in characteristic $p$, then taking $\alpha=0$ works.
    
% \end{example}

We prove the following version of the tilting equivalence, which is a small addendum to the usual tilting equivalence.

\begin{proposition}
    \label{proposition:eidtilt} Let $R$ be a perfectoid ring. Let $S=R/\varpi^p R$. Then $S$ is eidetic perfectoid and we have an equivalence of categories 
\[\Pfd_R\xrightarrow{\sim}\Eid_S\]
given by $R'\mapsto R'/\varpi^p R'$.

% with inverse given by sending $S'\in \Eid_S$ to the $R$-algebra
% $$R=W(S^{\flat})/\ker(\theta_R)\to W(S'^{\flat})/\ker(\theta_R)W(S'^{\flat}).$$
    
\end{proposition}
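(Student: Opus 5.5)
I will first check that $S=R/\varpi^pR$ is eidetic perfectoid, and that $R'\mapsto R'/\varpi^pR'$ lands in $\Eid_S$. For any $R'\in\Pfd_R$, condition (a) in the proof of \Cref{prop:pfd_Derived_Def} says that $(-)^p\colon R'/\varpi R'\to R'/\varpi^pR'$ is an isomorphism. Take $\alpha$ to be the image of $\varpi$ in $S':=R'/\varpi^pR'$. Then $\alpha^p=0$, and the isomorphism above says precisely that $(-)^p\colon S'/\alpha S'\to S'$ is an isomorphism. Since $\varpi^p\mid p$, we also have $pS'=0$. The kernel of $F_{S'}$ is therefore $\alpha S'$, with the same $\alpha$ as for $S$. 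By \Cref{lemma:relperf}, $S'$ is relatively perfect over $S$, so the functor is well defined.

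The key observation is that the tilt only depends on $S$. Since $\lim_{x\mapsto x^p}R'/\varpi^pR'=R'^\flat$, we get $S'^\flat=R'^\flat$.

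\textbf{Inverse functor.} Fix $d=[\varpi^{\flat p}]b-p\in A_{\inf}(R)=W(S^\flat)$ generating $\ker\theta_R$. For $S'\in\Eid_S$, I define
\[G(S'):=W(S'^\flat)/d\,W(S'^\flat).\]
To see that $G(S')$ is perfectoid, I need $S'^\flat$ to be $\varpi^\flat$-complete, where $\varpi^\flat$ is a lift of $\alpha$. Apply \Cref{lemma:principal_miracle} with $T=S'$ and $a=\alpha^{1/p}$: the kernel of $S'^\flat\to S'/\alpha S'$ is the principal ideal $\varpi^\flat S'^\flat$. Iterating the $p$-power isomorphism then shows that $\ker(S'^\flat\to S')=\varpi^{\flat p}S'^\flat$, and more generally that $\ker(\mathrm{pr}_n)=\varpi^{\flat p^{n+1}}S'^\flat$. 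So the inverse-limit topology coincides with the $\varpi^\flat$-adic topology, and $S'^\flat$ is $\varpi^\flat$-complete. Since $d$ is distinguished, $G(S')$ is perfectoid by \cite[Equation 2.1.2.1]{cesnavicius2024purity}, and it is an $R$-algebra via $A_{\inf}(R)$. Functoriality in $S'$ is clear.

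\textbf{Composites.} For $R'\in\Pfd_R$, we have $R'^\flat=(R'/\varpi^pR')^\flat$, and $W(R'^\flat)/d\cong R'$ by the usual tilting theory (or by the Nakayama argument at the end of the proof of \Cref{prop:pfd_Derived_Def}). Hence $G(F(R'))\cong R'$. Conversely, I need to show
\[G(S')/\varpi^p G(S')\cong S'.\]
The left side is $W(S'^\flat)/(d,[\varpi^{\flat p}])$. Because $d\equiv -p$ modulo $[\varpi^{\flat p}]$, this equals $W(S'^\flat)/(p,[\varpi^{\flat p}])=S'^\flat/\varpi^{\flat p}$, which is $S'$ by the kernel computation above.

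\textbf{Full faithfulness.} Maps $R'\to R''$ in $\Pfd_R$ correspond to maps $R'^\flat\to R''^\flat$ of perfect $R^\flat$-algebras: this is standard tilting, since $R'=W(R'^\flat)\otimes_{A_{\inf}(R)}R$. Likewise, ring maps $S'\to S''$ correspond to maps of tilts. In one direction, a map $S'\to S''$ induces a map on tilts by functoriality. In the other, a map $S'^\flat\to S''^\flat$ of $S^\flat$-algebras sends $\varpi^{\flat p}$ to $\varpi^{\flat p}$, so it descends to $S'^\flat/\varpi^{\flat p}=S'\to S''$. These constructions are mutually inverse because $S'^\flat\to S'$ is surjective and tilting is a limit.

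\textbf{Main obstacle.} The delicate step is the kernel identification $\ker(S'^\flat\to S')=\varpi^{\flat p}S'^\flat$, together with the completeness it yields. This is exactly where the classical rather than derived quotient works without any torsion hypothesis. \Cref{lemma:principal_miracle} handles it once I check that $a=\alpha^{1/p}$, for some choice of root, satisfies the lemma's hypotheses on $T=S'/\alpha S'$ (or I apply it with $T=S'$ and $a=\alpha$, then use the shift by the Frobenius isomorphism).
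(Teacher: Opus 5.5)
Your proof is correct and is essentially the paper's argument: the two inputs are $(R'/\varpi^pR')^\flat=R'^\flat$ and \Cref{lemma:principal_miracle}, which gives $\ker(S'^\flat\to S')=\varpi^{\flat p}S'^\flat$. Apply the lemma either directly with $T=S'$, $a=\alpha$, or with $T=S'/\alpha S'$, $a=\alpha^{1/p}$, but not with $T=S'$, $a=\alpha^{1/p}$ as you first wrote, since then $a^p=\alpha$ need not vanish. The only difference is packaging: the paper first reduces to characteristic $p$ via the usual tilting equivalence, takes $(-)^\flat$ as the inverse, and cites an external lemma for completeness, whereas you compose with untilting $W(-)/d$ directly and obtain the $\varpi^\flat$-completeness of $S'^\flat$ from $\ker(\mathrm{pr}_n)=\varpi^{\flat p^{n+1}}S'^\flat$.
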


\begin{proof} 
Using the tilting equivalence \cite[Proposition 2.1.9]{cesnavicius2024purity} and that we have a functorial isomorphism $(-)^{\sharp}\colon R'^{\flat}/(\varpi^{\flat})^pR'^{\flat}\to R'/\varpi^pR'$ in $\varpi$-complete perfectoid $R$-algebras (given by \Cref{lemma:principal_miracle}, for example), we can suppose that $R$ is in characteristic $p$ and show that the reduction modulo $\varpi^p$
\[\Perf_R^{\varpi}\xrightarrow{\mod \varpi^p} \Eid_S\]
where $\Perf_R^{\varpi}$ denotes the category of $\varpi$-complete perfect $R$-algebras, is an equivalence of categories. Note that one can see that reduction modulo $\varpi^p$ lands in relatively perfect $S$-algebras using the criterion of \Cref{lemma:relperf}. We claim that an inverse functor is given by the inverse limit perfection functor. Note that it lands in complete objects, for example by \cite[Lemma 3.3]{ishizuka2026decompositiontheoremperfectoidrings}. For any $R'\in \Perf_R^{\varpi}$ the natural transformation
\[R'\to (R'/\varpi^p R')^{\flat}\]
is an isomorphism because $R'$ is assumed to be $\varpi$-complete and perfect: indeed, we can postcompose the above with the isomorphism $((-)^{p^n})_{n\geq 0}\colon \varprojlim_{(-)^p}R'/\varpi^p R'\to \varprojlim_{n\geq 0} R'/\varpi^{p^{n+1}}$ and then the composition of these maps is naturally identified with $\varpi$-completion. Using this map when $R'=R$ and \mbox{$(-)^{\flat}$-functoriality} to define the $R$-algebra structures, we therefore see that $(-)^{\flat}\colon \Eid_S\to \Perf^{\varpi}_R$ is a well-defined left inverse.
Now, if $S'\in \Eid_S$, we only need to show that the kernel of the natural transformation
\[S'^{\flat}\to S'\]
is given by $\varpi^p S'^{\flat}$. This now follows from \Cref{lemma:principal_miracle}, concluding the proof.

\end{proof}

As mentioned above, taking the derived quotient by $\varpi^p$ is not needed in \Cref{proposition:eidtilt}. We take the opportunity to mention the following instance of a similar phenomenon.

\begin{lemma}\label{lemma:pComplete_underived}Let $R\to R'$ be a morphism of perfectoid rings. Suppose that the map $R/\varpi^p R\to R'/\varpi^p R'$ is flat. Then $R\to R'$ is a $\varpi$-completely flat map. 
\end{lemma}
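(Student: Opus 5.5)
We must show $R\to R'$ is $\varpi$-completely flat. By definition, this means that $R'\otimes^L_R R/\varpi R$ is discrete and flat over $R/\varpi R$. Equivalently, by the usual dévissage, it suffices to do this modulo $\varpi^p$: we want $R'\otimes^L_R R/^L\varpi^p$ to be concentrated in degree $0$ and flat over $R/^L\varpi^p$.

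The plan compares the derived and underived quotients using the structure of $\varpi$-torsion in perfectoid rings recalled in the proof of \Cref{prop:pfd_Derived_Def}.

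\textbf{Step 1 (flatness in degree $0$).} The animated ring $R/^L\varpi^p$ has $\pi_0=R/\varpi^pR=S$ and $\pi_1=R[\varpi^p]=R[\varpi]$; similarly for $R'$. Since $R'/^L\varpi^p=R'\otimes^L_R R/^L\varpi^p$, the desired statement is that $R'/^L\varpi^p$ is flat over $R/^L\varpi^p$. For animated rings this means two things:
\begin{enumerate}
\item $\pi_0$ is flat, which is exactly our hypothesis that $S\to S'$ is flat;
\item $\pi_1(R/^L\varpi^p)\otimes_{S}S'\to \pi_1(R'/^L\varpi^p)$ is an isomorphism.
\end{enumerate}
Once flatness of $R'/^L\varpi^p$ over $R/^L\varpi^p$ is known, $R'\otimes^L_R R/\varpi^p$ is discrete, being a base change of a flat map. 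Flatness over $R/\varpi$ then follows by base change, which gives $\varpi$-complete flatness.

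\textbf{Step 2 (torsion comparison).} The task is to show
\[R[\varpi]\otimes_{R/\varpi^p}R'/\varpi^p\xrightarrow{\sim}R'[\varpi].\]
Using the tilt, $R[\varpi]\cong R^\flat[\varpi^\flat]$ as shown in the proof of \Cref{prop:pfd_Derived_Def}. By \Cref{proposition:eidtilt}, $S'^\flat=R'^\flat$. I would identify $R[\varpi]$ intrinsically from $S$, as the kernel-type object $\ker(S\xrightarrow{\alpha}S)$ mapped via $p$-th powers. Concretely, property (b) gives an isomorphism $(-)^p\colon R[\varpi]\to R[\varpi^p]$. Composing, $R[\varpi]$ is identified with the image of Frobenius on the annihilator of $\varpi$ in $S=R/\varpi^p$ restricted to appropriate elements.

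The cleaner route I would try first is the following. Flatness of $S\to S'$ gives
\[\operatorname{Ann}_{S'}(\varpi^{1/p^n})=\operatorname{Ann}_S(\varpi^{1/p^n})\otimes_S S',\]
since kernels of multiplication maps commute with flat base change. Then I would express $R[\varpi]$ functorially as an inverse limit, along Frobenius, of annihilator-type subquotients of $S$. This uses the isomorphisms $(-)^p\colon R/\varpi^{1/p^n}\to R/\varpi^{1/p^{n-1}}$, in the spirit of the surjectivity argument in \Cref{prop:pfd_Derived_Def}. Alternatively, one can use relative perfectness (\Cref{lemma:relperf}) so that $F$ commutes with $-\otimes_S S'$.

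\textbf{Main obstacle.} The hard part is that inverse limits do not commute with tensor products. To get around this, I would exploit the fact that Frobenius identifies $R[\varpi]$ with a \emph{single} finite stage. Bounded torsion (shown in the proof of \Cref{prop:pfd_Derived_Def}) gives $R[\varpi^\infty]=R[\varpi]$. Hence $R[\varpi]\hookrightarrow S$ is injective and equals $\operatorname{Ann}_S(\varpi)\cap(\text{lifts killed by }\varpi\text{ in }R)$. This should be characterized as $\operatorname{Ann}_S(\varpi)\cap F(\operatorname{Ann}_S(\varpi^{1/p}))$, or something similar, a finite expression built from kernels and images of Frobenius and multiplication maps. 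Such an expression commutes with the flat base change $S\to S'$, using that $F_{S'}$ is the base change of $F_S$ by relative perfectness. Verifying this characterization precisely is where I expect the real work.
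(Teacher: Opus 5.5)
Your reduction matches the paper's: $\varpi$-complete flatness follows from flatness of $R/^L\varpi^p\to R'/^L\varpi^p$, and since $\pi_0$ is flat by hypothesis, this comes down to bijectivity of $R[\varpi]\otimes_S S'\to R'[\varpi]$. (A small slip in your first paragraph: $R'\otimes^L_R R/^L\varpi^p=R'/^L\varpi^p$ is not concentrated in degree $0$ when $R'$ has $\varpi$-torsion. Step 1 states the correct condition.)

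\textbf{The gap.} The proposal stops exactly where the content of the lemma lies. You offer the description of the image $J\subseteq S$ of $R[\varpi]$ as a Frobenius image of an annihilator as ``should be \dots\ or something similar'' and never verify it. What you have is the formal reduction plus a conjecture. Your guess is in fact right, and the intersection is redundant, since $t\varpi^{1/p}=0$ implies $t^p\varpi=0$. It is exactly what the paper's Claim proves after tilting: for perfect $A$ and $a=\varpi^{\flat p}$, the image of $A[a]$ in $B=A/aA$ is $\{t^p : t\in\operatorname{Ann}_B(a^{1/p})\}$.

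\textbf{How to close it directly in $S$.} Prove $J=F_S(\operatorname{Ann}_S(\varpi))$.
\begin{itemize}
\item For $\subseteq$: by property (b) in the proof of \Cref{prop:pfd_Derived_Def}, every $x\in R[\varpi]$ is $y^p$ with $y\in R[\varpi]$, so $\bar x=\bar y^{\,p}$ with $\bar y\in\operatorname{Ann}_S(\varpi)$.
\item For $\supseteq$, which is the non-formal point: an element of $\operatorname{Ann}_S(\varpi)$ need not lift to a $\varpi$-torsion element, but its $p$-th power does after a correction. If $\varpi\tilde t=\varpi^p s$ in $R$, then $\varpi^p(\tilde t^{\,p}-\varpi^{p(p-1)}s^p)=0$. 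So $\tilde t^{\,p}-\varpi^{p(p-1)}s^p$ lies in $R[\varpi^p]=R[\varpi]$, and it reduces to $t^p$ because $p(p-1)\geq p$.
\end{itemize}
Apply the same description to $R'$. Flatness gives $\operatorname{Ann}_{S'}(\varpi)=\operatorname{Ann}_S(\varpi)S'$, and then $(\sum\alpha_it_i)^p=\sum\alpha_i^pt_i^p$ gives $J'\subseteq JS'$. The reverse inclusion is automatic.

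Finally, $J\otimes_SS'\to S'$ is injective by flatness, and $R'[\varpi]\to S'$ is injective by $R'[\varpi^\infty]=R'[\varpi]$. Hence the comparison map is an isomorphism. No inverse limits or relative perfectness are needed.

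\textbf{Comparison with the paper.} The only difference is that you stay in $R$ and use (b) to extract $p$-th roots inside $R[\varpi]$. The paper instead passes to $R^\flat$, where $p$-th roots exist automatically and $A[a]=A[a^2]$ gives injectivity into $A/aA$.
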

\begin{proof}
Here the subtlety is that $R$ and $R'$ are not necessarily $\varpi$-torsion-free -- $\varpi$-complete flatness is equivalent to the map of animated rings $R/^L\varpi^p\to R'/^L \varpi^p$ being flat; see \cite[Section 5.2.1]{cesnavicius2024purity}. Here we only suppose that the usual quotient $R/\varpi^p R\to R'/\varpi^p R'$ is flat.\footnote{Note that for maps between arbitrary $p$-complete rings, an analogous statement is false, as shown by $\Z_p\to \Z_p\abracket{x}/(px)$.} Therefore we need to show that under this hypothesis, the map $R[\varpi^p]\otimes_{R/\varpi^p R} R'/\varpi^p R'\to R'[\varpi^p]$ is automatically an isomorphism. To this end, using that the above can be rewritten as $R^{\flat }[\varpi^{\flat p}]\otimes_{R^{\flat}/\varpi^{\flat p}R^{\flat}}R'^{\flat}/\varpi^{\flat p}R'^{\flat}\to R'^{\flat }[\varpi^{\flat p}]$, it suffices to show the claim below.

\begin{claim}Let $A\to A'$ be a map of perfect rings and $a\in A$ such that $A/aA\to A'/aA'$ is flat. Then
\[A[a]\otimes_{A/aA}A'/aA'\to A'[a]\]
is an isomorphism.
\end{claim}
We write $B=A/aA$ and $B'=A'/aA'$. Note that as $A[a]=A[a^2]$ and $A'[a]=A'[a^2]$ because those rings are perfect, the natural maps $A[a]\to B$ and $A'[a]\to B'$ are injective. Denote by $J$ and $J'$ the respective images. The statement is now equivalent to $B'J=J'$. Let $j\in J'$; by definition there is some $x\in A'$ with $xa=0$ which goes to $j$ in $B'$. Taking the Frobenius inverse of this equation and setting $y=x^{1/p}$, we get $ya^{1/p}=0$ in $A'$. Note that therefore $\overline{y}$, the image of $y$ in $B'$, is in $\Ann_{B'}(a^{1/p})$. Flatness of $B\to B'$ gives that $\overline{y}=\sum \alpha_i t_i$ with $\alpha_i\in B'$ and $t_i\in \Ann_{B}(a^{1/p})$. Now
$j=\sum \alpha_i^p t_i^p$. But now note that $t_i^p\in J$: indeed, as $t_ia^{1/p}=0$ in $B$, we have $\widetilde{t_i}^pa=a^ps_i$ in $A$ for some $s_i\in A$ and a lift $\widetilde{t_i}$ of $t_i$. Therefore $a(\widetilde{t_i}^p-a^{p-1}s_i)=0$, implying that $t_i^p\in J$, which concludes the proof.

% To prove the claim suffices to show that $B/J\otimes_B B'\to B/J'$ is an isomorphism. Surjectivity is evident. 

% Let's write $R=A\times_C B$ and $R'=A'\times_{C'}B'$ with $A=R/R[p]$, $B=(R/p)_{\red}$ and $C=(A/pA)_{\red}$ and similarly for $R'$ with $(-)'$ notation -- see \cite[Section 2.1.3]{cesnavicius2024purity} for this decomposition. Note that $R/pR=A/pA\times_C B$ and $S/pS=A'/pA'\times_{C'}B'$. Write $I=\ker(B\to C)$ and $I'=\ker(B'\to C')$. Those ideals canonically identify to $R[p]$ and $R'[p]$ respectively. Now in the below diagram depicting exact sequences,
% \[\begin{tikzcd}
% 	0 & I & B & C & 0 \\
% 	0 & {I'} & {B'} & {C'} & 0
% 	\arrow[from=1-1, to=1-2]
% 	\arrow[from=1-2, to=1-3]
% 	\arrow[from=1-3, to=1-4]
% 	\arrow["{(-)\otimes_{R/pR}R'/pR'}", shift right=4, dashed, from=1-3, to=2-3]
% 	\arrow[from=1-4, to=1-5]
% 	\arrow[from=2-1, to=2-2]
% 	\arrow[from=2-2, to=2-3]
% 	\arrow[from=2-3, to=2-4]
% 	\arrow[from=2-4, to=2-5]
% \end{tikzcd}\]
% we see that the second line is obtained by the flat tensor $(-)\otimes_{R/pR}R'/pR'$ of the first line, in particular $I\otimes_{R/pR}R'/pR'=I'$ as desired.
\end{proof}

We give a precise example of an eidetic perfectoid ring.

\begin{example}\label{example:eid-detailled}Consider the perfectoid ring $\mathbb{Z}_p^{\mathrm{cycl}}$. It can be
described as the $p$-adic completion of
\[
\frac{\mathbb{Z}_p[z_n\mid n\geq 1]}
{\left(
z_1^{p-1}+z_1^{p-2}+\cdots+z_1+1,\;
z_{n+1}^p-z_n\mid n\geq 1
\right)}
\]
where $z_n$ is sent to a primitive $p^n$-root of unity. We can consider, up to a unit, that $\varpi^p$ is $\xi_p-1$ where $\xi_p$ is a primitive $p$-th root of unity. Then the corresponding eidetic perfectoid ring can be described as (with $x=z-1$)
\[S=\F_p[z^{1/p^{\infty}}]/(z-1)=\F_p[x^{1/p^{\infty}}]/(x).\]
An interesting phenomenon that the author wants to point out is that the semiperfectoid ring $\Z_p^{\cycl}\abracket{t^{1/p^{\infty}}}/(t-1)$ modulo $\xi_p-1$ is (with $y=t-1$)
\[S\otimes_{\F_p}S= \F_p[x^{1/p^{\infty}},y^{1/p^{\infty}}]/(x,y).\]
This ring is not eidetic (over $S$) as $(x^{1/p},y^{1/p})$ is the kernel of the Frobenius, and this ideal is not principal. Indeed, a generator would lift at some finite level $\F_p[x^{1/p^n},y^{1/p^n}]/(x,y)$ where it is clear that it is absurd by Krull's height theorem in the ambient polynomial ring, for example. Note that this argument also shows that $\Z_p^{\cycl}\widehat{\otimes}_{\Z_p}\Z_p^{\cycl}$ is not perfectoid.
% An intersting phenomenon that the author wants to point out is that $\Z_p^{\cycl}\widehat{\otimes}_{\Z_p^{\cycl}}\Z_p^{\cycl}$ has the same eidetic reduction as $\Z_p^{\cycl}\abracket{t^{1/p^{\infty}}}/(t-1)$ by the above calculation.

The above will be studied in more detail in \Cref{section:program}.
\end{example}

\begin{rem}\label{remark:lost_info_semipfd}Note that the tilting correspondence \Cref{proposition:eidtilt} breaks down for semiperfectoid rings and loses information with respect to the perfectoidization. Take $R$ to be a $p$-torsion-free perfectoid ring. Then $R\abracket{t^{1/p^{\infty}}}/(t)$ and $R\abracket{t^{1/p^{\infty}}}/(t-p)$ have the same reduction modulo $p$ (and therefore also the same tilt). But these rings do not have the same perfectoidization: namely, after a $p$-completely faithfully flat base change to a perfectoid ring where $p$ satisfies the condition on $r$ in \cite[Notation 3.17]{ishizuka2026perfectoidizaitonfinitealgebrasperfectoid} (this base change can be taken to be the identity map if there are already such elements in $R$), the perfectoidizations differ. Namely, one is given by $R$ and the other is given by \cite[Proposition 3.18]{ishizuka2026perfectoidizaitonfinitealgebrasperfectoid}.
    
\end{rem}

\subsection{Ideal correspondence}

The analogous definition of perfectoid ideals for eidetic perfectoid rings becomes a really simple algebraic condition. Recall that for a ring $S$ in characteristic $p$ which is semiperfect, if $I\subseteq S$ is an ideal, then
\[I^{[p]}=\{i^p\mid i\in I\}\]
is also an ideal, namely the Frobenius image of the surjective Frobenius map.
\begin{definition}\label{definition:eid-ideals} Let $S$ be an eidetic perfectoid ring. An ideal $I\subseteq  S$ is called \emph{eidetic perfectoid} if $I^{[p]}=I$. This is equivalent to the map $F\colon I\to I$ being surjective.
\end{definition}

We can now prove the main theorem of this section.

\begin{thm}\label{theorem:eid-tilt-ideals} Let $R$ be a perfectoid ring and $S=R/\varpi^p R$ the associated eidetic perfectoid ring (see \Cref{proposition:eidtilt}). Then reduction modulo $\varpi^p$ induces a bijection
\[\{J\subseteq  R \mid J \text{ is a perfectoid ideal}\}\to \{I\subseteq  S \mid I\text{ is an eidetic perfectoid ideal}\}\]
\end{thm}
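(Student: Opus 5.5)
The idea is to reduce everything to \Cref{proposition:eidtilt}: perfectoid quotients $R/J$ correspond to surjections in $\Pfd_R$, and eidetic perfectoid quotients of $S$ correspond to surjections in $\Eid_S$. We identify both sets of ideals with quotient objects on each side, so the equivalence of categories yields the bijection.

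\textbf{Well-definedness.} If $J$ is perfectoid, then $R'=R/J\in\Pfd_R$, since perfectoid rings are $\varpi$-complete. Hence $R'/\varpi^pR'=S/I$ with $I=(J+\varpi^pR)/\varpi^pR$, and $S/I\in\Eid_S$. By \Cref{lemma:relperf}, $S/I$ is relatively perfect over $S$. Relative perfectness of a quotient $S/I$ over $S$ means that
\[(-)^p\colon S/(I+\alpha S)\to S/I\]
is an isomorphism. We compare this with $(-)^p\colon S/\alpha S\to S$, which is an isomorphism.
\begin{itemize}
\item Surjectivity onto $I$ says that every element of $I$ is of the form $s^p$. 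Injectivity says that $s^p\in I$ forces $s\in I+\alpha S$. Together these give $I=\{s^p : s\in I+\alpha S\}$.
\item Since $\alpha^p=0$, we have $\{s^p : s\in I+\alpha S\}=I^{[p]}$.
\end{itemize}
So $I$ is eidetic perfectoid. Conversely, if $I^{[p]}=I$, the same bookkeeping shows that $S/(I+\alpha S)\to S/I$ is bijective. Surjectivity holds because $S\to S$ is onto. For injectivity, if $s^p\in I=I^{[p]}$, then $s^p=i^p$ with $i\in I$, so $(s-i)^p=0$, hence $s-i\in\alpha S$. Thus \emph{eidetic perfectoid ideals are exactly the ideals $I$ with $S/I\in\Eid_S$}.

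\textbf{Injectivity.} Suppose $J,J'$ are perfectoid ideals with the same image $I$. Then $R/J$ and $R/J'$ have isomorphic reductions $S/I$ as $S$-algebras. By full faithfulness in \Cref{proposition:eidtilt}, there is an isomorphism of $R$-algebras $R/J\cong R/J'$. Any $R$-algebra map between quotients of $R$ is compatible with the projections, so $J=J'$.

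\textbf{Surjectivity.} Given $I$ with $S/I\in\Eid_S$, essential surjectivity of \Cref{proposition:eidtilt} gives $R'\in\Pfd_R$ with $R'/\varpi^pR'\cong S/I$ over $S$. The delicate point is to show that $R\to R'$ is surjective. Since $R\to R'\to R'/\varpi^pR'$ is surjective, we get $R'=\mathrm{im}(R)+\varpi^pR'$. Iterating, $R'=\mathrm{im}(R)+\varpi^{pn}R'$ for all $n$.

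Surjectivity onto $R'$ then follows by $\varpi$-adic successive approximation. Write $x=r_0+\varpi^p x_1$, then $x_1=r_1+\varpi^p x_2$, and so on. The series $\sum r_n\varpi^{pn}$ converges in $R$ because $R$ is $\varpi$-complete, and it maps to $x$ because $R'$ is $\varpi$-adically separated, being perfectoid and hence classically complete.

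Alternatively, one can check surjectivity on tilts: $R^\flat\to R'^\flat$ is surjective as an inverse limit of the surjections $S\to S/I$ along Frobenius, the transition kernels being compatible since $I^{[p]}=I$. This lifts to $A_{\inf}$ and then to $R$.

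Put $J=\ker(R\to R')$, so $R/J\cong R'$ is perfectoid. It remains to identify the image of $J$ in $S$ with $I$. This holds because $S/\bar J=R'/\varpi^pR'\cong S/I$ as quotients of $S$.

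I expect the main obstacle to be the surjectivity step: the quotient produced by the equivalence must genuinely be a quotient of $R$, and this requires the completeness and separatedness arguments rather than anything formal.
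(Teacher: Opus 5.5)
Your proposal is correct and follows essentially the same route as the paper. You transport perfectoid quotients of $R$ to relatively perfect quotients of $S$ via \Cref{proposition:eidtilt}, spelling out the successive-approximation argument for surjectivity that the paper compresses into ``surjections can be checked modulo $\varpi^p$''. You then show that $S/I$ is relatively perfect over $S$ if and only if $I=I^{[p]}$, which you verify elementwise where the paper uses a two-out-of-three argument for $S/(I,\alpha)\xrightarrow{(-)^p} S/I^{[p]}\to S/I$. One slip is cosmetic: the fact that every element of $I$ is a $p$-th power comes from the semiperfectness of $S$ (that is, $S/\alpha S\cong S$), not from surjectivity of $S/(I+\alpha S)\to S/I$, and your conclusion $I=I^{[p]}$ is unaffected.
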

\begin{proof}Note that perfectoid ideals (\Cref{definition:perfectoid_ideal}) correspond one-to-one to equivalence classes of surjections
$R\to R'$
to a perfectoid ring $R'$. Because those rings are $\varpi$-complete, surjections are preserved and can be checked modulo $\varpi^p$. Using the equivalence of categories \Cref{proposition:eidtilt}, we therefore see that those surjections are in one-to-one correspondence with surjections
\[S\to S'\]
with $S'\in \Eid_S$. Let $\alpha$ denote the image of $\varpi$ in $S$. For an ideal $I\subseteq  S$, the condition that $S/I$ is relatively perfect over $S$ is equivalent to the condition that
$(-)^p\colon S/(I,\alpha)\to S/I$
is an isomorphism (\Cref{lemma:relperf}). Note that $(-)^p\colon S/\alpha S\to S$ is an isomorphism by assumption and therefore there is an induced isomorphism $(-)^p\colon S/(I,\alpha)\to S/I^{[p]}$. Now, looking at the commutative diagram
\[\begin{tikzcd}
{S/(\alpha,I)} \arrow[r, "(-)^p"] \arrow[rd, "(-)^p"'] & S/I^{[p]} \arrow[d] \\
                                                       & S/I            
\end{tikzcd}\]
where the vertical arrow is the natural quotient map, we see using the two-out-of-three principle for isomorphisms that $S/I$ is relatively perfect over $S$ if and only if $I=I^{[p]}$. This concludes the proof.
\end{proof}

\begin{rem}\label{remark:ideal_correspondence_more_explicitly} The opposite direction in \Cref{theorem:eid-tilt-ideals} can be made more explicit. Note that the tilt $(-)^{\flat}\colon \Eid_S\to \Perf^{\varpi^{\flat}}_{R^{\flat}}$ preserves surjections (as a morphism between $\varpi^{\flat}$-complete objects is surjective if and only if it is modulo $\varpi^{\flat}$), so if $S\to S'$ is a surjection in $\Eid_S$ with kernel $I$, then the kernel of $S^{\flat}\to S'^{\flat}$ is given by $I^{\flat}=\{(i_n)_{n\in \N}\in S^{\flat}=\varprojlim_{(-)^p}S \mid i_n\in I\}$. Now, using the functor $W(-)/\ker(\theta)$, we see that the corresponding ideal in $R$ is $p$-adically generated\footnote{We mean by this the process of taking the usual generated ideal and then its $p$-adic closure.} by elements of the form
$$\lim_{n\to \infty} (\widetilde{i}_n)^{p^n}$$
where $\widetilde{i}_n$ is \emph{any} lift of $i_n$ to $R$ and the limit is taken in the $\varpi$-adic topology.       
See \Cref{lemma:approximations} for a subsequent remark on the above, in a formulation which is useful for the application in \Cref{section:program}. In particular, we explain how an element $i_n$ yields a more precise approximation of the above limit in the $\varpi$-adic topology as $n$ grows.
\end{rem}

\begin{lemma}\label{lemma:approximations}Let $R$ be a perfectoid ring and $r \in R$ an element. Consider the map $R\to R/(r)_{\pfd}$ and denote by $S\to S'$ the induced map at $\varpi^p$-reduction. Denote by $i=\overline{r}\in S$ the image in the quotient. Let $(i_n)_{n\in \N}$ be a compatible system of $p$-power roots of $i$ in $S$, such that $i_n$ is in the kernel of $S\to S'$ for every $n\in \N$. Let $\widetilde{i}_n$ be any lift of $i_n$ to $R$. Then
$$\alpha:=\lim_{n\to \infty} (\widetilde{i}_n)^{p^n}\in R$$
is in the kernel of $R\to R/(r)_{\pfd}$, admits a compatible sequence of $p$-power roots, and is equal to $r$ modulo $\varpi^p R$.

Moreover, for $n\in \N$, we have
$$(\widetilde{i}_n)^{p^n} \equiv \alpha \mod \varpi^{p(n+1)} R.$$
\end{lemma}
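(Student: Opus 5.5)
The plan is to show that the sequence $(\widetilde{i}_n)^{p^n}$ is $\varpi$-adically Cauchy, with exactly the rate claimed in the statement, using an elementary congruence estimate for $p$-th powers. Then I use the same estimate to build the compatible $p$-power roots, and finally use separatedness of $R/(r)_{\pfd}$ to show that $\alpha$ lies in the kernel.

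\emph{Step 1: the basic estimate.} If $a\equiv b \mod \varpi^m R$ with $m\geq p$, then $a^p\equiv b^p \mod \varpi^{m+p}R$. To see this, write $a=b+\varpi^m c$ and expand. The middle binomial terms are divisible by $p\varpi^m$, hence by $\varpi^{m+p}$ because $\varpi^p\mid p$. The last term is $\varpi^{mp}c^p$, and $mp\geq m+p$ since $m\geq p\geq 2$. Iterating, $a\equiv b \mod \varpi^{p}$ gives $a^{p^n}\equiv b^{p^n} \mod \varpi^{p(n+1)}$.

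\emph{Step 2: convergence and the rate.} Since $i_{n+1}^p=i_n$ in $S=R/\varpi^pR$, we have $\widetilde{i}_{n+1}^p\equiv \widetilde{i}_n \mod \varpi^p$. Raising to the $p^n$-th power and applying Step 1 gives
\[\widetilde{i}_{n+1}^{p^{n+1}}\equiv \widetilde{i}_n^{p^n}\mod \varpi^{p(n+1)}R.\]
So the sequence is Cauchy and converges by $\varpi$-completeness of $R$. Telescoping the differences from index $n$ onward gives $(\widetilde{i}_n)^{p^n}\equiv\alpha \mod \varpi^{p(n+1)}R$. Here I use that $\varpi^{p(n+1)}R$ is closed; this holds since perfectoid rings have bounded $\varpi^\infty$-torsion, or one can simply pass to the limit in the quotient. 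The same estimate shows the limit is independent of the chosen lifts. The case $n=0$ gives $\alpha\equiv\widetilde{i}_0\equiv r \mod \varpi^p$, because $i_0=i=\overline r$.

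\emph{Step 3: $p$-power roots.} For $k\geq 0$, apply Step 2 to the shifted system $(i_{n+k})_n$, which is a compatible system of $p$-power roots of $i_k$. This gives elements
\[\alpha_k:=\lim_n \widetilde{i}_{n+k}^{p^n}.\]
By continuity of $x\mapsto x^p$, we get $\alpha_k^p=\lim_n\widetilde{i}_{n+k}^{p^{n+1}}=\alpha_{k-1}$, which is a reindexing of the defining sequence of $\alpha_{k-1}$. Also $\alpha_0=\alpha$, so $(\alpha_k)$ is a compatible system of $p$-power roots of $\alpha$. Moreover $\alpha_k\equiv\widetilde{i}_k \mod \varpi^p$, again by the case $n=0$.

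\emph{Step 4: $\alpha$ lies in the kernel.} Let $J=(r)_{\pfd}$ and $R'=R/J$, so that $S'=R'/\varpi^pR'$. Each $i_k$ maps to $0$ in $S'$ by hypothesis. Since $\alpha_k\equiv\widetilde{i}_k \mod \varpi^p$, the image $\beta_k$ of $\alpha_k$ in $R'$ lies in $\varpi^pR'$. Hence the image $\beta=\beta_k^{p^k}$ of $\alpha$ lies in $\varpi^{p^{k+1}}R'$ for every $k$. Since $R'$ is perfectoid, it is $\varpi$-complete and in particular $\varpi$-adically separated, so $\beta=0$, i.e.\ $\alpha\in J$.

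The only delicate point is the closedness or separatedness used in passing to limits. I expect it to be harmless because perfectoid rings have bounded $\varpi$-torsion; otherwise the argument is routine.
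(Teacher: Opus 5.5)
Your proof is correct and is essentially the paper's argument written out in full. The paper gets the existence of the limit, its compatible $p$-power roots and its membership in $(r)_{\pfd}$ from the functorial bijection $\varprojlim_{x\mapsto x^p}R\cong\varprojlim_{x\mapsto x^p}R/\varpi^pR$ (Fargues--Fontaine, Prop.~2.1.2 with $Q=X^p$), which is what your Steps 2--3 reprove by hand, and it gets the congruence from the same binomial induction as your Step 1. Your Step 4 replaces functoriality of $\sharp$ along $R\to R/(r)_{\pfd}$ by the observation that the image of $\alpha$ lies in $\varpi^{p^{k+1}}R'$ for every $k$, which is equally valid; also, the closedness of $\varpi^{p(n+1)}R$ needs no torsion bound, since it is an open subgroup for the $\varpi$-adic topology.
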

\begin{proof}
The fact that $\alpha$ is in the kernel of $R\to R/(r)_{\pfd}$ follows from inspecting the functoriality of $W(-)/\ker(\theta)$ (see, for example, \cite[Proposition 2.1.2]{FarguesFontaine2018} applied with $Q=X^p$). Now the last remark follows from the usual fact (which is also implicitly used in the existence of this limit) that if $x\equiv y \mod I$ for any ideal $I\subset R$ then $x^p\equiv y^p \mod pI+I^p$ by the binomial formula. Conduct an induction using this fact with $I=(\varpi^p)$ and use that $(p)\subseteq (\varpi^p)$ by assumption.
\end{proof}

\begin{rem}\label{remark:dine_isihizuka} Let $R$ be any perfectoid ring. Note that if $S=R/\varpi^p R$ is eidetic perfectoid, then eidetic perfectoid ideals are also in one-to-one correspondence with
\[\{K\subseteq R^{\flat} \mid K \emph{ is closed and radical}\}\leftrightarrow \{I\subseteq  S \mid I\emph{ is an eidetic perfectoid ideal}\}\]
where closed means closed in the $\varpi^{\flat}$-topology -- one gets the above claim using \Cref{theorem:eid-tilt-ideals} for the $\varpi^{\flat}$-complete perfectoid ring $R^{\flat}$. Now, combining the applications of \Cref{theorem:eid-tilt-ideals} to $R$ and $R^{\flat}$ and using the isomorphism $R^{\flat}/\varpi^{\flat} R^{\flat}\to R/\varpi R$, we recover \cite[Main Theorem A]{Dine2024} (with the same bijection as in their paper),
\[\{K\subseteq R^{\flat} \mid K \emph{ is closed and radical}\}\leftrightarrow \{J\subseteq  R \mid J \emph{ is a perfectoid ideal}\}\]
This slightly generalizes their statement to cases where the perfectoid ring $R$ is not necessarily $p$-torsion-free. This last correspondence is given by tilting and untilting exactly as described in \cite{Dine2024}. In particular, it preserves inclusion and intersection of ideals.
\end{rem}

Note that the following is a direct consequence of \Cref{theorem:eid-tilt-ideals}.
\begin{cor}\label{corollary:determined} Let $R$ be a perfectoid ring and let $J_1$ and $J_2$ be perfectoid ideals. Then $J_1=J_2$ if and only if $(J_1,\varpi^p)=(J_2,\varpi^p)$. In particular, as one can take a $\varpi$ such that $\varpi^p R=pR$, we have $J_1=J_2$ if and only if $(J_1,p)=(J_2,p)$.
    
\end{cor}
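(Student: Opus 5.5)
The plan is to deduce this from the injectivity half of \Cref{theorem:eid-tilt-ideals}. The forward implication is immediate: if $J_1=J_2$ then $(J_1,\varpi^p)=(J_2,\varpi^p)$. For the converse, the key observation is that the ideal $(J_i,\varpi^p)\subseteq R$ is exactly the preimage in $R$ of the reduction $\overline{J_i}:=(J_i+\varpi^pR)/\varpi^pR\subseteq S=R/\varpi^pR$.

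So if $(J_1,\varpi^p)=(J_2,\varpi^p)$, taking images in $S$ gives $\overline{J_1}=\overline{J_2}$ as ideals of $S$. Since $J_1$ and $J_2$ are perfectoid ideals, \Cref{theorem:eid-tilt-ideals} says that reduction modulo $\varpi^p$ is a bijection from perfectoid ideals of $R$ onto eidetic perfectoid ideals of $S$. In particular it is injective, so $J_1=J_2$.

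The only point needing a moment's care is that ``reduction modulo $\varpi^p$'' in the theorem means $J\mapsto (J+\varpi^pR)/\varpi^pR$, the kernel of $S\to R'/\varpi^pR'$ where $R'=R/J$. This is indeed the image of $J$ in $S$, because $R'/\varpi^pR'=R/(J,\varpi^p)$. Hence equality of the ideals $(J_i,\varpi^p)$ is precisely equality of the reductions used in the theorem.

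For the final sentence, I will invoke \Cref{notation:varpi}: we may choose $\varpi$ with $\varpi^p=pu$ for a unit $u$, so that $\varpi^pR=pR$. The first statement applied with this choice of $\varpi$ then gives the $p$-version. I expect no real obstacle here; the whole content lies in \Cref{theorem:eid-tilt-ideals}, and the remaining bookkeeping is routine.
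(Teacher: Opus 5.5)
Your proposal is correct and matches the paper, which records the corollary as a direct consequence of the injectivity in \Cref{theorem:eid-tilt-ideals}. The ingredients are the same as yours: $(J,\varpi^p)$ is the preimage in $R$ of the reduction of $J$ in $S$, and for the final sentence one uses the choice $\varpi^p=pu$ with $u$ a unit from \Cref{notation:varpi}.
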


\section{On equations for perfectoidizations of ideals}\label{section:on_equations}

% \Cref{section:prism-env} and \Cref{section:Andre} provide recollections to give context for the article.

\begin{notation}\label{notation:PrismEnvelSection}Throughout \Cref{section:on_equations}, unless otherwise specified, $R$ is a perfectoid ring and we write $A=A_{\inf}(R)$. We also continue to follow \Cref{notation:varpi}.
    
\end{notation}

\subsection{Recollections on perfect prismatic envelopes}\label{section:prism-env}

In this section, we define perfect prismatic envelopes and give several key classical examples to illustrate them and explain their pertinence, which may be useful for future reference. We will use them in \Cref{section:equation} to show that perfectoid ideals are governed by universal equations.

\begin{rem}\label{remark:prismatic-enveloppe} 
% Let $R$ be a perfectoid ring and $A=A_{\inf}(R)$. 
Let $d\in A$ be a generator of the kernel of Fontaine's map $\theta_R\colon A\to R$. Let $(a_e)_{e\in E}\subseteq  A$ be a family of elements indexed by a set $E$. Then one can consider the following (derived) pushout in the category of (animated\footnote{One can ignore the derived pushout if one only cares about the perfection. But the correct object without perfection is the derived one; in this article, it is especially convenient for \Cref{example:key_flatness}, where the language of derived tensor products is more suitable. See \cite[Appendix A]{bhatt2022prismatizationpadicformalschemes} for an exposition of animated $\delta$-rings.}) $(p,d)$-complete $\delta$-rings:
\[\begin{tikzcd}
A\{E\} \arrow[r, "e\mapsto de-a_e"] \arrow[d, "e\mapsto 0"'] & A\{E\} \arrow[d] \\
A \arrow[r]                                                  & A\left\{\frac{E}{d}\right\}
\end{tikzcd}\]
% This $\delta$-$A$-algebra $A\left\{\frac{E}{d}\right\}$ has the following universal property: a map from this $\delta$-$A$-algebra is the same as map of $\delta$-rings $A\to B$ to a $\delta$-ring $B$ together with a choice of elements $b_e\in B$ such that $db_e=a_e$. 
Now consider
\[A\left\{\phi^{-\infty}\left(\frac{E}{d}\right)\right\}:=\left(\varinjlim_{n\in \N}A\left\{\frac{\phi^{-n}(E)}{\phi^{-n}(d)}\right\}\right)^{(p,d)\wedge}=\left(\varinjlim_{\phi}A\left\{\frac{E}{d}\right\}\right)^{(p,d)\wedge}\]
the $\phi$-perfection\footnote{Note that for any $n\in \N$, one sees that 
$A\to A\left\{\frac{\phi^{-n}(E)}{\phi^{-n}(d)}\right\}$
is naturally identified with
$A\xrightarrow{\phi^n}A\to A\left\{\frac{E}{d}\right\}$.} and then $(p,d)$-completion of the above. 
This is a perfect (therefore discrete \cite[Remark A.24]{bhatt2022prismatizationpadicformalschemes}) $(p,d)$-complete $\delta$-ring.
We call this the \emph{perfect prismatic envelope of $E$}. 
% As taking the $\phi$-perfection and $(p,d)$-completion is a left adjoint to the inclusion in the category of perfect and $(p,d)$-complete $\delta$-rings,
This object has the following universal property: a map
$$A\left\{\phi^{-\infty}\left(\frac{E}{d}\right)\right\}\to B$$
where $B$ is a perfect and $(p,d)$-complete $\delta$-ring, is the same as a ring map\footnote{Note that any ring map between perfect $\delta$-rings is automatically a $\delta$-ring map thanks to \cite[Corollary 2.31]{bhatt2022Prismsa}.} $A\to B$ together with a choice of elements $b_e\in B$ such that $db_e=a_e$.

Note also that as $A\left\{\phi^{-\infty}\left(\frac{E}{d}\right)\right\}$ is perfect, the element $d$ is a non-zero-divisor in this ring, being distinguished (\cite[Lemma 2.34]{bhatt2022Prismsa}). Therefore $\left(A\left\{\phi^{-\infty}\left(\frac{E}{d}\right)\right\},(d)\right)$ defines a perfect prism.
\end{rem}

For the sake of completeness and to provide context for this article, we reprove the following. The proof is a slightly more precise variation of \cite[Proposition 7.2, Corollary 7.3]{bhatt2022Prismsa}. Note that no transfinite induction is needed below, in contrast to the proof in \cite[Proposition 7.2]{bhatt2022Prismsa}.
\begin{lemma}\label{lemma:initexist}
    Let $I\subseteq  R$ be an ideal. Then there is a ring morphism 
    $$R/I\to(R/I)_{\pfd}$$
    to a perfectoid ring which is initial among maps to perfectoid rings.   
    
    Namely, let $I':=\theta_R^{-1}(I)$. If we set (see \Cref{remark:prismatic-enveloppe})
    \[C:=A\left\{\phi^{-\infty}\left(\frac{I'}{d}\right)\right\}\]
     then $R/I\to C/dC$ is this initial map.
    \begin{proof}
    % Because perfectoid rings are $p$-complete, witout loss of generality, we can suppose that $I$ is $p$-adically closed.
    
    % We go equivalent to prove the analogue statements for $(p,d)$-complete perfect $\delta$-rings.
 
 First, note that by construction $C$ is a $(p,d)$-complete perfect $\delta$-ring, so that $C/dC$ is perfectoid. 
     % Recall also that any ring map between perfect $p$-complete, $p$-torsion free $\delta$-ring map is automatically a $\delta$-map.
     
     To check that $R/I\to C/dC  $ has the correct universal property, let $A_{\inf}(R)\to B$ be a map to a perfect $(p,d)$-complete $\delta$-ring such that every element of $I'$ is sent to zero after reduction modulo $d$. This means that the image of every element in $I'$ is in the ideal $dB$. Using $d$-torsion-freeness, we get that there is a unique element $\frac{i'}{d}$ in $B$ for every element $i'\in I'$. It now follows from the universal property of $C$, as recalled in \Cref{remark:prismatic-enveloppe}, that there is a unique map $C\to B$ making the required diagram commute, concluding the proof.
    \end{proof}
\end{lemma}

\begin{rem}\label{remark:initexist}In the proof of \Cref{lemma:initexist}, we could replace $I'$ by any set of elements generating $I'$. In particular, in the case of a principal ideal $I=(r)$ for $r\in R$, if $a\in A_{\inf}(R)$ is any lift of $r$, then
\[(R/rR)_{\pfd}
=\left(A\left\{\phi^{-\infty}\left(\frac{a}{d}\right)\right\}\right)/(d).\]
Let us note that $A\left\{\phi^{-\infty}\left(\frac{a}{d}\right)\right\}$ is the perfection and $(p,d)$-completion of the $\delta$-ring (with $\delta(t_i)=t_{i+1}$ for $i\geq 0$)
\[A[t_0,t_1,\dots,]/(\delta^i(dt_0-a)).\]
Again, we can ignore the derived aspect because we are taking the perfection of this $\delta$-ring.
\end{rem}

\begin{rem}\label{remark:crys_comp}
An important result\footnote{Note that this isomorphism of group schemes can actually be seen as the engine for Hodge-Tate comparison; see \cite[Construction 5.10]{bhatt2022prismatizationpadicformalschemes}.} in computing prismatic envelopes is \cite
[Lemma 2.36]{bhatt2022Prismsa}, which shows that the squares in 
$$\begin{tikzcd}
\G_a^{\sharp} \arrow[d] \arrow[r] & W\times_{\G_a}\G_a^{\sharp} \arrow[r, "\proj"] \arrow[d] & W \arrow[d, "F"] \\
1 \arrow[r, "0"']                 & W \arrow[r, "\cdot p=FV"']                               & W               
\end{tikzcd}$$ 
are pullback squares at the level of group schemes over $\Z_{(p)}$, where $W$ stands for the Witt scheme. Namely, \cite
[Lemma 2.36]{bhatt2022Prismsa} gives the square on the right-hand side -- the further pullback to the neutral element immediately gives the identification $\tau\colon W[F]\xrightarrow{\sim} \G_a^{\sharp}$ of the kernel of the Frobenius.\footnote{ For the record, we note that when choosing coordinates of $\G_a^{\sharp}$ as
$$\G_a^{\sharp}\cong \Spec\left(\Z_{(p)}\left[\frac{t^{p^n}}{p^{v_p(p^n!)}}\right]\right)_{n\in \N}$$ 
and using Joyal coordinates for Witt vectors, the isomorphism $\tau$ is given on points, for a $\Z_{(p)}$-algebra $R$, by
$$(\alpha_n)\in W[F](R)\to ((-1)^{n}\alpha_n)\in \G_a^{\sharp}(R)$$
We also note that the vertical map $W\times_{\G_a}\G_a^{\sharp}\to W$ is expressed on points as $(x,y)\mapsto V^{-1}(x-\tau(y))$.} 

On the algebra side, as the algebra of functions on the Witt scheme is the free $\delta$-ring on an element, the above implies that for any $\delta$-$\Z_{(p)}$-algebra $A$ and any element $a\in A$, we have
\[A\left\{\frac{\phi(a)}{p}\right\}\cong \Z_{(p)}\left[\frac{t^{p^n}}{p^{v_p(p^n!)}}\right]\widehat{\otimes}^L_{\Z_{(p)}[t]}A,\]
where the $\Z_{(p)}[t]$-algebra structure on $A$ is given by $t\mapsto a$. If $A$ is $p$-torsion-free and $a$ is a non-zero-divisor in $A/pA$, then by \cite[Lemma 2.38]{bhatt2022Prismsa} the above tensor product is classical and the resulting ring is $p$-torsion-free. For future reference, we note that in this case, using this isomorphism, we have
\begin{equation}\label{equation:envelope_mod_p}
    A\left\{\frac{\phi(a)}{p}\right\}/(p)\cong A/(p,a^p)\left[t_i\right]/(t_i^p)_{i\geq 1}
\end{equation}

% The above also allows for this powerful comparison about prismatic envelopes 

\end{rem}

\begin{example}\label{example:key_flatness}A key flatness property goes as follows. Let $A\to P$ be a map of perfect prisms which are both transversal ($A/dA$ and $P/dP$ are $p$-torsion-free) and let $f\in P$ be such that $f$ is a non-zero-divisor in $P/(p,d)$ and such that\footnote{Note that our assumptions ensure that those quotients are also derived. Here using the animated \emph{language} to work with the correct objects and properties is really useful.} $A/(p,d)\to P/(p,d,f)$ is (faithfully) flat. Then
\[A\to P\left\{\frac{f}{d}\right\}\]
 is $(p,d)$-completely (faithfully) flat. To see this, let $D=A\left\{\frac{d}{p}\right\}$ which is discrete and $p$-torsion-free by our assumptions and the recalled calculation at the end of \Cref{remark:crys_comp} (using $d=\phi(\phi^{-1}(d))$). The same goes for $P_D=P\widehat{\otimes}_A^L D$. Consider $P\left\{\frac{f}{d}\right\}\widehat{\otimes}_A^L D=P_D\{\frac{f}{p}\}$ which is also discrete and $p$-torsion-free by the same argument: $P_D$ is $p$-torsion-free and by \Cref{equation:envelope_mod_p} $P_D/pP_D=P/(p,d)[t_i]/(t_i^p)$ so $f$ is a non-zero-divisor in this ring by assumption. Now, using our flatness hypothesis together with \Cref{equation:envelope_mod_p} we see that $D\to P_D\left\{\frac{f}{p}\right\}$ is $p$-completely (faithfully) flat as modulo $p$ this reads
\[
A/(p, d)[t_i] / (t_i^p) \longrightarrow \Big( P/(p, d, f)[t_i] / (t_i^p) \Big) [y_i] / (y_i^p)
\]
 Note that there is a natural augmentation map $D\to A/(p,d)$ defined by sending the variables $t_i$ to zero. Now, benefiting from this factorization we see
 \[P\left\{\frac{f}{d}\right\}\otimes_A^L A/(p,d)=P_D\left\{\frac{f}{p}\right\}\otimes_{D}^L A/(p,d) \]
 and, as $D\to P_D\left\{\frac{f}{p}\right\}$ was just shown to be $p$-completely (faithfully) flat, this concludes the proof.
\end{example}

\begin{example}\label{example:crystalisation} In this example, as we only care about the perfection, we can use the classical counterpart of the pushout in \Cref{remark:prismatic-enveloppe} safely -- we do not change the notation but implicitly take $\pi_0$ of the potentially animated objects and use the universal properties in the ordinary category of $\delta$-rings.

We show what happens for perfect prismatic envelopes when $p\in I$. We start with the case $(p)=I$. First, we claim that in $A\{\frac{p}{d}\}^{\wedge (p,d)}$, the element $\frac{p}{d}$ is a unit. Namely, if $u=\frac{p}{d}$, we have $p=ud$, so applying $\delta$ we get
\[\delta(p)\equiv u^p\delta(d) \mod (p,d),\]
implying the claim using $(p,d)$-completeness. Note that the argument applies in a symmetric manner for $A\left\{\frac{d}{p}\right\}$. We therefore deduce
\[A\left\{\frac{p}{d}\right\}\cong A\left\{\frac{d}{p}\right\}\]
Now, using also that $A$ is perfect, we can use \Cref{remark:crys_comp} to compute the reduction modulo $p$ of this ring, which is
\[(R^{\flat}/\varpi^{\flat p})[t_i]_{i\geq 1}/(t_i^p)\]
As a consequence, the perfection $A\left\{\phi^{-\infty} \left(\frac{d}{p}\right)\right\}$ is isomorphic to the Witt vectors of the perfection of $(R^{\flat}/\varpi^{\flat p})[t_i]_{i\geq 1}/(t_i^p)$, which is $(R/\varpi)_{\red}$.

Now, for the general case, note that if $p\in I$, then

$$A\left\{\phi^{-\infty}\left(\frac{I}{d}\right)\right\}\cong\left(A\left\{\phi^{-\infty}\left(\frac{p}{d}\right)\right\}\right)\left\{\phi^{-\infty}\left(\frac{I}{d}\right)\right\}.$$
Therefore we can assume that $p=d$ and apply the exact same argument as above to deduce that 
\[A\left\{\phi^{-\infty}\left(\frac{I}{d}\right)\right\}^{\wedge (p,d)}\cong W\left(\left(R^{\flat}/(\varpi,I)\right)_{\red}\right).\]
\end{example}

\begin{example}\label{example:TeichmullerEnvellope}Another simple and key example is when $a\in A$ is such that $\delta(a)=0$. Note the following.
\begin{claim}Let $A$ be a perfect $\delta$-ring which is $(p,d)$-complete where $d\in A$ is a distinguished element. Let $a\in A$ with $\delta(a)=0$. Then $a=0$ if and only if $d$ divides $a$.
\begin{proof}Let $R=A/d$ be the associated perfectoid ring and $R^{\flat}=A/p$ its tilt. If $a$ is divisible by $d$, then $\theta(a)=0$. Recall that there is $(a_n)\in R^{\flat}=\varprojlim_{(-)^p}R$ such that $[(a_n)]=a$. We deduce that $a_n\in R$ is nilpotent for every $n\in \N$, and therefore zero, as perfectoid rings are reduced. As a consequence, $a=0$.
\end{proof}
\end{claim}
Now we can deduce that 
\[A\left\{\phi^{-\infty}\left(\frac{a}{d}\right)\right\}=A/(a^{1/p^{\infty}})^{\wedge (p,d)}.\]
Indeed, we see that $a^{1/p^n}=0$ in this ring by the above claim for every $n\in \N$. Moreover, this ring is $d$-torsion-free, so as $d\frac{a}{d}=a=0$, we deduce that $\frac{a}{d}=0$ in this ring, and the same holds for $\phi^{-n}\left(\frac{a}{d}\right)$ for every $n\in \N$. Now, we see that there is a well-defined map
\[A/(a^{1/p^{\infty}})^{\wedge (p,d)}\to A\left\{\phi^{-\infty}\left(\frac{a}{d}\right)\right\}^{(p,d)\wedge }\]
which is surjective. Also, note that the map given by sending $\frac{a}{d}\mapsto 0$ in $A/(a^{1/p^{\infty}})^{\wedge (p,d)}$ is also surjective and realizes the above map as a section, concluding the proof.
\end{example}

\begin{rem}\label{remark:perfectoidization_of_ideals}Together with André's lemma \cite[Theorem 7.14]{bhatt2022Prismsa} (see \Cref{lemma:Andre_analysis} and \Cref{cor:Andre_cover} for a recollection using perfected prismatic envelopes), the calculation in \Cref{example:TeichmullerEnvellope} shows Bhatt's surjectivity theorem \cite[Proposition 7.11]{bhatt2022Prismsa}: the map $R\to (R/I)_{\pfd}$ is surjective, and the kernel of this map is the smallest perfectoid ideal containing $I$.
\end{rem}

\subsection{Perfect prismatic envelopes and André's lemma}\label{section:Andre}

Having presented prismatic envelopes together with some examples, we take the opportunity to recall a proof of André's lemma in this language. The goal of this short section is to show the close relationship between typical André covers and perfectoid ideals, see \Cref{proposition:idealpfd_and_andre}.

\begin{lemma}[{André's lemma \cite[Theorem 7.14]{bhatt2022Prismsa}}]\label{lemma:Andre_analysis}Let $A$ be a perfect prism, let $d\in A$ be a distinguished element, and let $m(t)\in A[t]$ be a monic polynomial.
% Let $m(t)\in A[t]$ be a monic polynomial Then the map of prisms
% \[A\to A\abracket{y^{1/p^{\infty}}}\left\{\frac{m(y)}{d}\right\}^{(p,d)\wedge}\]
% is $(p,d)$-completely free and $(p,d)$-completely ind-syntomic cover. As a consequence (see the proof) 
Then the map to the perfect prismatic envelope
\[A\to A\abracket{y^{1/p^{\infty}}}\left\{\phi^{-\infty}\left(\frac{m(y)}{d}\right)\right\}\]
is $(p,d)$-completely faithfully flat.
\end{lemma}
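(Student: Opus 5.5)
The plan is to deduce the statement from \Cref{example:key_flatness}, after base changing to a transversal situation. Faithful flatness survives base change, and perfection is a filtered colimit of $(p,d)$-completely flat maps followed by completion. So it suffices to show that
\[A\to A\abracket{y^{1/p^{\infty}}}\left\{\frac{m(y)}{d}\right\}\]
is $(p,d)$-completely faithfully flat. To be precise, I will work with the colimit along $\phi$ of the envelopes of $\phi^{-n}(m(y))/\phi^{-n}(d)$; each stage is identified via $\phi^n$ with the first one (as in the footnote of \Cref{remark:prismatic-enveloppe}).

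\textbf{Step 1: reduction to a transversal base.} We can test $(p,d)$-complete faithful flatness after a $(p,d)$-completely faithfully flat base change $A\to A'$ of perfect prisms, because prismatic envelopes commute with base change, the source object being defined by a pushout. So I first pass to a universal transversal perfect prism. Concretely, $A$ receives a map from
\[A_0=\Z_p\abracket{q^{1/p^\infty},c_i^{1/p^\infty}}^{\wedge}\]
with $d$ a generator of the appropriate form. Here the coefficients of $m$ are handled by adjoining free variables together with their $p$-power roots, after first extending to a perfect prism where the coefficients of $m$ admit compatible roots. The expected trick is that the whole statement is a base change of a \emph{universal} one. That universal case is the perfect prism $A_0=\Z_p\abracket{u^{1/p^\infty},c_0^{1/p^\infty},\dots,c_{n-1}^{1/p^\infty}}$, taken with distinguished element $d=p-[u]$ (transversal), and the envelope of $m(y)=y^n+\sum c_i y^i$. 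This step is only legitimate because the derived envelope commutes with derived base change. The delicate point is that an arbitrary $d$ and arbitrary coefficients need not come from Teichmüller lifts, so I would instead express $A$ as a $(p,d)$-completely faithfully flat extension (in the sense of André flatness of perfect prisms) of a base change of the universal prism. I expect this reduction to be the main obstacle. What I would try first is to prove directly that $P=A\abracket{y^{1/p^\infty}}$ and $f=m(y)$ satisfy the hypotheses of \Cref{example:key_flatness} whenever $A$ is transversal, and to handle general $A$ by the derived base change argument.

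\textbf{Step 2: checking the hypotheses of \Cref{example:key_flatness}.} Set $P=A\abracket{y^{1/p^\infty}}$, which is perfect, and note that $P/dP=(A/d)\abracket{y^{1/p^\infty}}$ is $p$-torsion-free when $A/d$ is. Then $P/(p,d)=\overline{A}[y^{1/p^\infty}]$ with $\overline{A}=A/(p,d)$. The polynomial $m(y)$ is monic, hence a non-zero-divisor there. Moreover,
\[\overline{A}[y^{1/p^\infty}]/(m(y))=\varinjlim_k \overline{A}[y^{1/p^k}]/(m(y)),\]
and each term of this colimit is a finite free $\overline{A}$-module of positive rank, since $m(y)$ is monic in $y^{1/p^k}$ as well, of degree $p^k\deg m$. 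A filtered colimit of faithfully flat modules is flat, and it is faithful because it is nonzero modulo every maximal ideal. So \Cref{example:key_flatness} gives that $A\to P\{\frac{m(y)}{d}\}$ is $(p,d)$-completely faithfully flat.

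\textbf{Step 3: passing to the perfection.} The same argument applies with $\phi^{-n}(m(y))$ and $\phi^{-n}(d)$ at each stage. Transition maps between flat objects give a flat colimit, which stays flat after $(p,d)$-completion, so the perfection and completion are $(p,d)$-completely flat. Faithfulness is checked modulo $(p,d)$: there the colimit of faithfully flat $\overline{A}$-algebras is nonzero at every point.
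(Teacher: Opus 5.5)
\textbf{Gap in Step 1.} Your Steps 2 and 3 match the paper's argument. Step 1, which you flag as the main obstacle, is not carried out, and neither reduction you sketch works.

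Descent along a $(p,d)$-completely faithfully flat map of perfect prisms $A\to A'$ with $A'/dA'$ $p$-torsion-free is unavailable in general. If $A/d$ is an $\F_p$-algebra (e.g.\ the crystalline prism $(W(k),(p))$), then so is $A'/dA'$, so it cannot be $p$-torsion-free unless it is zero. Producing such an $A'$ via ``Andr\'e flatness'' would also be circular, since that is the statement being proved.

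Your candidate universal object $\Z_p\abracket{u^{1/p^{\infty}},c_i^{1/p^{\infty}}}$ with $d=p-[u]$ is not universal either. A map of prisms out of it forces $d$ to be a unit multiple of the image of $p-[u]$. Hence $p=u^{\sharp}$ must admit compatible $p$-power roots in $A/d$, which is false, e.g., for $A/d=\Z_p^{\cycl}$. It also forces the coefficients of $m$ to satisfy $\delta=0$, i.e.\ to be Teichm\"uller lifts.

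\textbf{The missing input.} What you need is the right universal transversal base, which is what the paper uses. Take the universal oriented perfect prism $A_0$ of \Cref{remark:universal_oriented_perfect_prism}: it corepresents an \emph{arbitrary} distinguished element, and $A_0/d$ is $p$-torsion-free. Adjoin free perfect $\delta$-variables for the coefficients of $m$ as in \Cref{remark:free_perf_delta_ring}. Modulo $d$ this is a completed perfect polynomial ring over $A_0/d$, so it is still $p$-torsion-free.

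The map from this prism to $A$ classifying $d$ and the coefficients of $m$ exhibits your map as the $(p,d)$-completed base change of the universal one. This uses that envelopes are pushouts and that completed derived tensor products of perfect $\delta$-rings are discrete. Since $(p,d)$-complete faithful flatness is stable under base change, it suffices to treat the universal case.

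\textbf{Remaining steps.} From there, your Step 2 is exactly the paper's verification of the hypotheses of \Cref{example:key_flatness}, and your Step 3 is its passage to the perfection. One detail needs adding in Step 3: stage $n$ gives $(p,\phi^{-n}(d))$-complete faithful flatness. This agrees with $(p,d)$-complete faithful flatness because $(p,d)=(p,\phi^{-n}(d)^{p^n})$ (Stacks Project, Tag 0H75).
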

\begin{proof}
By stability under derived $(p,d)$-complete base change\footnote{We mean that if $A'\to B$ and $A'\to C$ are maps between perfect $(p,d)$-complete $\delta$-$A$-algebras then the derived $(p,d)$-complete base change is discrete and coincides with the classical $(p,d)$-completion of the tensor product. One way to see this is that this derived base change is $W((B/p)\widehat{\otimes}^{\varpi^{\flat}}_{A'/p}(C/p))$. Indeed, this can be checked modulo $p$ using derived Nakayama (note that these rings are $p$-torsion-free). Modulo $p$, the derived $\varpi$-complete tensor product is the same as the classical completion of the usual tensor product because these rings are perfect.} we can assume that $A/d$ is $p$-torsion-free by \Cref{remark:universal_oriented_perfect_prism}. 
We first consider the map $A\to A\abracket{y^{1/p^{\infty}}}\left\{\frac{m(y)}{(d)}\right\}$. 
Note that this map is $(p,d)$-completely faithfully flat as it fits in \Cref{example:key_flatness}. As $A$ is perfect, we deduce that $A\xrightarrow{\phi^n}A \to A\abracket{y^{1/p^{\infty}}}\left\{\frac{m(y)}{(d)}\right\}$ is $(p,\phi^{-n}(d))$-completely faithfully flat. By \cite[\href{https://stacks.math.columbia.edu/tag/0H75}{Tag 0H75}]{stacks-project}, this turns out to be equivalent to $(p,d)$-complete faithful flatness, as $(p,d)=(p,\phi^{-n}(d)^{p^n})$.
Therefore, when taking the colimit over $n\in \N$ in the category of $(p,d)$-complete $\delta$-rings, the map
\[A\to A\abracket{y^{1/p^{\infty}}}\left\{\phi^{-\infty}\left(\frac{m(y)}{d}\right)\right\}\]
is a $(p,d)$-completely faithfully flat cover, as desired.
% $(p,d)$. Those rings are $d$-torsion free because they are perfect $\delta$-rings. Therefore taking the derived quotient modulo $d$ is the same as taking the usual quotient modulo $d$. At this stage we have a map of perfectoid rings which is flat after classical reduction modulo $p$, so by \Cref{lemma:pComplete_underived}, the map is actually derived $p$-completely flat, concluding.
\end{proof}

The following is therefore obtained by taking \Cref{lemma:Andre_analysis} modulo $d$.
\begin{cor}\label{cor:Andre_cover}Let $R$ be a perfectoid ring and let $m(t)\in R[t]$ be a monic polynomial. Then there exists a $p$-completely faithfully flat cover $R\to R'$ where $m(t)$ has a root and $R'$ is a perfectoid ring.
\end{cor}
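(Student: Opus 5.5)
We deduce \Cref{cor:Andre_cover} from \Cref{lemma:Andre_analysis} by lifting everything to $A=A_{\inf}(R)$, applying the lemma, and reducing modulo $d$.

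\textbf{Step 1: lift and apply the lemma.} Let $A=A_{\inf}(R)$ and let $d$ generate $\ker\theta_R$ as in \Cref{notation:varpi}, so $(A,d)$ is a perfect prism. Since $\theta_R$ is surjective, we lift each coefficient of $m(t)$ to $A$. Keeping the leading coefficient equal to $1$ gives a monic lift $\widetilde m(t)\in A[t]$. \Cref{lemma:Andre_analysis} then gives that
\[B:=A\abracket{y^{1/p^{\infty}}}\left\{\phi^{-\infty}\left(\frac{\widetilde m(y)}{d}\right)\right\}\]
is $(p,d)$-completely faithfully flat over $A$.

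\textbf{Step 2: the quotient is perfectoid and contains a root.} By \Cref{remark:prismatic-enveloppe}, $B$ is a perfect $(p,d)$-complete $\delta$-ring in which $d$ is a non-zero-divisor, so $(B,dB)$ is a perfect prism. Hence $R':=B/dB$ is perfectoid. In $B$ we have $\widetilde m(y)=d\cdot\frac{\widetilde m(y)}{d}\in dB$. Therefore the image of $y$ in $R'$ is a root of $m(t)$, viewed via $R=A/d\to B/d$.

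\textbf{Step 3: transfer faithful flatness.} It remains to show that $R\to R'$ is $p$-completely faithfully flat. The plan is to derived base change the $(p,d)$-completely faithfully flat map $A\to B$ along $A\to A/d=R$. Since $d$ is a non-zero-divisor on both $A$ and $B$, we have $B\otimes^L_A A/d=B/dB$, which is discrete. Reducing further modulo $p$, we get
\[R'\otimes^L_R R/p\;\simeq\; B\otimes^L_A A/(p,d).\]
The right-hand side is flat (faithfully flat on $\pi_0$ and discrete) over $A/(p,d)=R/p$ by the definition of $(p,d)$-complete faithful flatness. Note that $R\otimes^L_A A/(p,d)$ agrees with $R\otimes^L_R R/^Lp$ after the identification $R=A/^L d$; this is only bookkeeping in animated rings.

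This gives exactly that $R\to R'$ is $p$-completely faithfully flat. Alternatively, one can verify flatness of the classical reduction $R/\varpi^pR\to R'/\varpi^pR'$ and invoke \Cref{lemma:pComplete_underived}, which avoids the derived quotient by $p$.

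\textbf{Main obstacle.} The only delicate point is Step 3: checking that the derived reductions agree with the classical quotients and that faithful flatness passes from the $(p,d)$-adic setting to the $p$-adic one. The $d$-torsion-freeness recorded in \Cref{remark:prismatic-enveloppe} handles the first issue. For the second, I would work with the derived tensor products directly, as in the proof of \Cref{lemma:Andre_analysis}.
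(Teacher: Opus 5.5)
Your proof is correct and follows the paper's route: the paper obtains the corollary by reducing \Cref{lemma:Andre_analysis} modulo $d$, and your Step 3 identity $R'\otimes^L_R R/p\simeq B\otimes^L_A A/(p,d)$ (using that $d$ is a non-zero-divisor on $A$ and $B$) supplies the detail the paper leaves implicit. Drop the aside claiming that $R\otimes^L_A A/(p,d)$ agrees with $R/^Lp$: it is false, since $d$ acts by zero on $A/(p,d)$, so the left side has $\pi_1\cong A/(p,d)$ while $\pi_1(R/^Lp)=R[p]$; nothing in your argument uses it.
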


To conclude this subsection on recollections about André's lemma and its consequences, we note the close relation between perfectoid ideals and typical André covers.

\begin{proposition}\label{proposition:idealpfd_and_andre}Let $R$ be a perfectoid ring and let $r\in R$ be an element. Consider the typical André cover $R\to R\abracket{y^{1/p^{\infty}}}/(y-r)$ from \Cref{cor:Andre_cover}. Then the constant terms of the elements in $(y-r)_{\pfd}\subseteq R\abracket{y^{1/p^{\infty}}}$ are exactly the elements in $(r)_{\pfd}\subseteq R$.

\end{proposition}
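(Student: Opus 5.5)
The plan is to compare $(y-r)_{\pfd}$ with $(r)_{\pfd}$ through the \emph{constant term map}
\[c\colon P:=R\abracket{y^{1/p^{\infty}}}\to R,\qquad y^{1/p^n}\mapsto 0 .\]
This is a surjective ring map of perfectoid rings, and the inclusion $R\subseteq P$ is a section of it. Its kernel $K:=\ker(c)$ is the closure of the ideal generated by the $y^{1/p^n}$. Moreover $P=R\oplus K$ as $R$-modules, and $c$ is exactly the projection onto the first factor. Set $J:=(y-r)_{\pfd}\subseteq P$. The goal is $c(J)=(r)_{\pfd}$. Since $c$ is surjective, $c(J)$ is an ideal of $R$. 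The decomposition $P=R\oplus K$ gives $c(J)=(J+K)\cap R$: for $j\in J$ we have $c(j)=j-(j-c(j))$ with $j-c(j)\in K$, and conversely.

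\emph{Inclusion $c(J)\subseteq (r)_{\pfd}$.} The composite $P\xrightarrow{c}R\to R/(r)_{\pfd}$ goes to a perfectoid ring and sends $y-r$ to $-r=0$. By the universal property of $R\to R/I_{\pfd}$ (\Cref{remark:perfectoidization_semipfd}, \Cref{lemma:initexist}), this composite kills $J$. Hence $c(J)\subseteq (r)_{\pfd}$.

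\emph{Inclusion $(r)_{\pfd}\subseteq c(J)$.} First, $r=-c(y-r)\in c(J)$. It therefore suffices to show that $c(J)$ is a perfectoid ideal of $R$, because $(r)_{\pfd}$ is the smallest perfectoid ideal containing $r$. Via $c(J)=(J+K)\cap R$ we get $R/c(J)\cong P/(J+K)$, so it is enough to show that $J+K$ is a perfectoid ideal of $P$.

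\begin{itemize}
\item \emph{Reduction modulo $\varpi^p$.} Here $P/\varpi^pP=S[y^{1/p^\infty}]$ with $S=R/\varpi^pR$. By \Cref{theorem:eid-tilt-ideals}, the reductions $\bar J$ and $\bar K$ are eidetic perfectoid ideals. Since Frobenius is additive in characteristic $p$, the sum $\bar J+\bar K$ satisfies $(\bar J+\bar K)^{[p]}=\bar J+\bar K$. So by \Cref{theorem:eid-tilt-ideals} there is a unique perfectoid ideal $L\subseteq P$ with $L+\varpi^pP=J+K+\varpi^pP$.

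\item \emph{Comparison of $L$ with $J+K$.} I would show $L$ is the closure of $J+K$. The explicit description in \Cref{remark:ideal_correspondence_more_explicitly} presents $L$ as $p$-adically generated by limits $\lim(\widetilde i_n)^{p^n}$, where the $\widetilde i_n$ are lifts of elements of $\bar J+\bar K$. These lifts may be chosen in $J+K$, since $J+K$ surjects onto $\bar J+\bar K$. This gives $L=\overline{J+K}$.

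\item \emph{Closedness of $J+K$.} Since $P/J$ is perfectoid, $J+K$ is the preimage of the ideal of $P/J$ generated by the images of the $y^{1/p^n}$. I would use that $c$ restricted to $J$ is compatible with the $\varpi$-adic filtrations, via $c(\varpi^m P)=\varpi^m R$ and the splitting $P=R\oplus K$. The intended argument: given $s\in(r)_{\pfd}$, write $s=c(j_0)+\varpi^p s_1$ with $s_1\in(r)_{\pfd}$. This uses $\overline{c(J)}=\overline{(r)_{\pfd}}$ modulo $\varpi^p$, which follows from the two previous steps. One needs to check that $s_1$ can again be taken in $(r)_{\pfd}$, which uses the $\varpi$-torsion control from \Cref{prop:pfd_Derived_Def}. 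Iterating gives $s=c\bigl(\sum_m\varpi^{pm}j_m\bigr)$, with the sum converging in $J$ because $J$ is closed.
\end{itemize}

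The main obstacle is the closedness/approximation step. Everything else is formal from the universal property and \Cref{theorem:eid-tilt-ideals}. The delicate point is controlling divisibility by $\varpi^p$ inside $(r)_{\pfd}$ when $R$ has $\varpi$-torsion. If this causes trouble, I would first reduce to $R/\varpi R$ being well behaved, for instance by passing to the tilt via \Cref{remark:dine_isihizuka}, where the correspondence with closed radical ideals of $R^{\flat}$ makes closures easier to handle.
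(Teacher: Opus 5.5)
Your proof is correct in outline and takes a genuinely different route from the paper. The paper perfectoidizes in two stages. With $R'=P/J$, the ring $R'/(y)_{\pfd}$ is, by the universal properties, the initial perfectoid ring under $P$ killing both $y-r$ and $y$, so it equals $R/(r)_{\pfd}$. Then \Cref{example:TeichmullerEnvellope} (the perfect prismatic envelope of an element with $\delta=0$) identifies $(y)_{\pfd}\subseteq R'$ with the closure of $(y^{1/p^\infty})R'$. Hence the evaluation $R'\to R/(r)_{\pfd}$, $y^{1/p^n}\mapsto 0$, is the quotient by that closure.

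You avoid prismatic envelopes altogether. Your easy inclusion is the same universal-property argument. For the hard inclusion you instead use \Cref{theorem:eid-tilt-ideals} and additivity of Frobenius to show that $\overline{J+K}$ is perfectoid, and then remove the closure by a $\varpi$-adic approximation. The paper's route is shorter and explains the statement conceptually. Yours stays inside \Cref{section:pfdideals_modp} and gives the general fact that a sum of two perfectoid ideals is perfectoid (run your approximation on an element of $L$ rather than of $(r)_{\pfd}$). It also makes explicit the passage from the closure $\overline{(y^{1/p^\infty})R'}$ to the image of $K$, which the paper's last step leaves implicit.

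Two details need adjusting.

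\emph{Step 2.} The inclusion you actually use is $J+K\subseteq L$, not the one you argue. It gives $K\subseteq L$, so $R/c(L)\cong P/L$ is perfectoid. It also gives $r=-c(y-r)\in c(L)$. Together these yield $(r)_{\pfd}\subseteq c(L)\subseteq c(J)+\varpi^pR$. The inclusion $J+K\subseteq L$ follows from \Cref{remark:ideal_correspondence_more_explicitly} applied to $\bar J,\bar K\subseteq \bar L$, with lifts chosen in $L$. The lifting argument you wrote proves the opposite inclusion $L\subseteq\overline{J+K}$, which is true but not needed.

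\emph{Step 3.} The congruence only gives $s-c(j_0)=\varpi^p t$ with $t\in R$ and $\varpi^pt\in(r)_{\pfd}$. Since $(r)_{\pfd}\cap\varpi^pR$ can be strictly larger than $\varpi^p(r)_{\pfd}$, you cannot expect $t\in(r)_{\pfd}$. This already happens for $r=p$ with $R$ $p$-torsion-free and $\varpi^p=pu$: there $(p)_{\pfd}=\sqrt{pR}$ contains $\varpi^p$. The torsion control you point to does give what is needed. In the perfectoid ring $T=R/(r)_{\pfd}$ one has $T[\varpi^p]=T[\varpi]$, by the argument in the proof of \Cref{prop:pfd_Derived_Def}. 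Hence $s_1:=\varpi t\in(r)_{\pfd}$ and $s=c(j_0)+\varpi^{p-1}s_1$. As $p-1\geq 1$, iterating gives $s=c\bigl(\sum_m \varpi^{m(p-1)}j_m\bigr)$, with the series converging in the closed ideal $J$.
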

\begin{proof}
In the following diagram,
\[\begin{tikzcd}
R \arrow[r] \arrow[d]                                  & R/(r)_{\pfd} \arrow[d, "="] \\
R'=R\abracket{y^{1/p^{\infty}}}/(y-r)_{\pfd} \arrow[r] & R'/(y)_{\pfd}              
\end{tikzcd}\]
note that $(y)_{\pfd}\subset R'$ is given by the $p$-adic closure of $(y^{1/p^{\infty}})$, see \Cref{example:TeichmullerEnvellope}, for example. Now, setting $y^{1/p^n}$ to zero for all $n\in \N$ gives a surjective evaluation map
\[R'=R\abracket{y^{1/p^{\infty}}}/(y-r)_{\pfd} \to R/(r)_{\pfd}.\]
Therefore, it follows that the constant terms of the elements in $(y-r)_{\pfd}$ are exactly the elements in $(r)_{\pfd}$.
\end{proof}

\subsection{Equations for perfectoidizations of ideals}\label{section:equation}

We now pass to the main part of this section. 

One of the main questions in the context of perfectoidization of ideals is as follows: given $r\in R$, where $R$ is a perfectoid ring, how can we get to $(r)_{\pfd}$ from $r$? We show below that there exist (non-canonical) universal $p$-adic power series in infinitely many variables that solve this exact problem in a universal way.

\begin{rem}\label{remark:free_perf_delta_ring} Consider the free $\delta$-$A$-algebra $A\{x\}=A[x_0,\dots,x_n,\dots]$ with $\delta(x_n)=x_{n+1}$ for 
$n\geq 0$. We can consider its perfection and then its $(p,d)$-completion. We denote the resulting ring by
\[A\{\phi^{-\infty}(x)\}.\]
Note that in the category of perfect and $(p,d)$-complete $\delta$-$A$-algebras, a morphism out of this ring is the same as the choice of an element, namely the image of $x_0=x$.
Note that because this $\delta$-ring is perfect and $p$-complete, the natural map\footnote{The completion inside the Witt vectors is with respect to $t_0$.}
\[A\{\phi^{-\infty}(x)\}\to W(R^{\flat}\abracket{t_i^{1/p^{\infty}}}_{i\in \N})\]
sending $x=x_0$ to $(\overline{x},\overline{\delta(x)},\overline{\delta^2(x)},\dots)=(t_0,t_1,t_2,\dots)$ is an isomorphism.

Note that this ring modulo $d$ can therefore be identified with
\[R\abracket{t_i^{1/p^{\infty}}}_{i\in \N}\]
and that this ring (co)represents the functor $A_{\inf}(-)$ in the category of $p$-complete algebras over $R$.
% Now, one can consider the perfect prismatic envelope of $x$:
% \[\]
\end{rem}

\begin{rem}\label{remark:universal_oriented_perfect_prism}Consider the $\delta$-algebra underlying the universal oriented prism (\cite[Example 3.4]{bhatt2022Prismsa}), and denote by $d$ the specified distinguished element of this ring; then consider the $\phi$-perfection and $(p,d)$-completion of this $\delta$-algebra. We denote the resulting $(p,d)$-complete perfect $\delta$-ring by $A_0$.
A map from $A_0$ to another $(p,d)$-complete perfect $\delta$-ring $A$ is the same as the choice of a distinguished element $d\in A$. 

Using the same argument as in \Cref{remark:free_perf_delta_ring}, we see that the natural map
\[A_0\to W\left(\F_p\abracket{x_i^{1/p^{\infty}},x_1^{-1}}_{i\in \N}\right)\] 
is an isomorphism. Moreover, the latter can be identified (here the completion is with respect to $(p,x_0)$) with $\Z_p\abracket{x_i^{1/p^{\infty}},x_1^{-1}}_{i\in \N}$ as this is a $p$-torsion-free, $p$-complete residually perfect ring with the same reduction modulo $p$ as $A_0$. In the Witt vectors, the universal element $d$ corresponds to $(x_0,x_1,x_2,\dots)$, so in $\Z_p\abracket{x_i^{1/p^{\infty}},x_1^{-1}}_{i\in \N}$ it is identified with
\[\sum_{i=0}^{\infty}x_i^{1/p^i}p^i\]
using \cite[Example 9.3.49 (v) and Equation 9.3.47]{gabber2018foundationsringtheory}. Write $u=-\sum_{i=1}^{\infty}x_i^{1/p^i}p^{i-1}$ which is a unit in this ring. Now $d=x_0-pu$. As a consequence, we have an isomorphism
$$A_0/(d) \cong \Z_p\abracket{(pu)^{1/p^{\infty}},x_1^{\pm 1/p^{\infty}},x_2^{1/p^{\infty}},\dots}$$ 
and this is a $p$-torsion-free perfectoid ring which is weakly initial in the category of perfectoid rings, meaning that any perfectoid ring admits a map from it.

More precisely, a map from $A_0/(d)$ to a perfectoid ring $R$ is the same as choosing a unit $u\in R$ together with compatible $p$-power roots of $pu$, and also choosing an expression
\[u=\sum_{i=1}^{\infty} \sharp(v_i)^{1/p^i} p^{i-1}.\]
% We set $v_0=pu$.

\end{rem}

\begin{proposition}\label{prop:UniversalEquations}
    % Let $R$ be any perfectoid ring, choose an element $d\in A_{\inf}(R)$ that generates $\ker(\theta)$. Let $r\in R$ be any element and choose a lift $a\in A$.
There are power series (where the completion is with respect to $(p,x_0)$)
\[f_n\in \Z_p\abracket{x_i^{1/p^{\infty}},x_1^{-1},t_j^{1/p^{\infty}}}_{i,j\in \N}\]
indexed by $n\in \N$ such that for every perfectoid ring $R$, every element $r\in R$, any expression of $r$ as $r=\sum_{i=0}^{\infty}\sharp(r_i^{1/p^i})p^i$, and any choices as at the end of \Cref{remark:universal_oriented_perfect_prism}, we have
\[(r)_{\pfd}=\overline{(f_n(v_i,r_j)^{1/p^{\infty}})_{n\in \N}}\]
where the evaluations $f_n(v_i,r_j)\in R$ admit compatible sequences of $p$-power roots such that
\[r=\sum_{n=0}^{\infty}f_n(v_i,r_j)^{1/p^n}p^n.\]
    
\end{proposition}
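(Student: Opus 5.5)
The plan is to construct everything once, in a universal perfectoid ring, and then specialize to an arbitrary $R$. Let $A_0$ be as in \Cref{remark:universal_oriented_perfect_prism}, and set
\[A_{\mathrm{u}}:=A_0\{\phi^{-\infty}(x)\},\]
the free perfect $(p,d)$-complete $\delta$-$A_0$-algebra of \Cref{remark:free_perf_delta_ring}. Combining the two Witt vector identifications, one gets
\[A_{\mathrm{u}}\cong W\bigl(\F_p\abracket{x_i^{1/p^\infty},x_1^{-1},t_j^{1/p^\infty}}\bigr).\]
Its reduction
\[R_{\mathrm{u}}:=A_{\mathrm{u}}/d\cong \Z_p\abracket{x_i^{1/p^{\infty}},x_1^{-1},t_j^{1/p^{\infty}}}\]
is perfectoid. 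It carries the universal element $r_{\mathrm{u}}=\theta(x)=\sum_j t_j^{1/p^j}p^j$, and a map $R_{\mathrm{u}}\to R$ is exactly the data $(v_i,r_j)$ in the statement. The $f_n$ will be power series in $R_{\mathrm{u}}$, i.e.\ in the ring of the statement. Evaluation at $(v_i,r_j)$ is then just the image under the classifying map $R_{\mathrm{u}}\to R$.

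\textbf{Step 1 (universal expression).} Let $J_{\mathrm{u}}=(r_{\mathrm{u}})_{\pfd}$ and $R'_{\mathrm{u}}=R_{\mathrm{u}}/J_{\mathrm{u}}$. By \Cref{remark:ideal_correspondence_more_explicitly} the map $A_{\mathrm{u}}\to A_{\inf}(R'_{\mathrm{u}})$ is surjective with kernel $W(K)$, where $K\subseteq R_{\mathrm{u}}^{\flat}$ is a closed ideal stable under $p$-th roots. The lift $x=\sum_j[t_j^{1/p^j}]p^j$ of $r_{\mathrm{u}}$ maps to $\ker\theta_{R'_{\mathrm{u}}}=d\,A_{\inf}(R'_{\mathrm{u}})$. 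So $x=d\tilde c+w$ with $\tilde c\in A_{\mathrm{u}}$ and $w\in W(K)$. Write $w=\sum_n[k_n^{1/p^n}]p^n$ in Witt coordinates, so that $k_n\in K$ because $K$ is perfect. Put $f_n:=\theta([k_n])\in R_{\mathrm{u}}$. These admit the compatible roots $\theta([k_n^{1/p^m}])$, and applying $\theta$ gives
\[r_{\mathrm{u}}=\sum_n f_n^{1/p^n}p^n.\]
Each $f_n$ lies in $J_{\mathrm{u}}$ since $[k_n]\in W(K)$.

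\textbf{Step 2 (the $f_n$ generate).} Let $J$ be the $p$-adic closure of the ideal generated by all $f_n^{1/p^m}$. Then $J\subseteq J_{\mathrm{u}}$, and $r_{\mathrm{u}}\in J$ by Step 1. I claim $J$ is perfectoid, which by minimality forces $J=J_{\mathrm{u}}$.

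On the tilt side, let $K'$ be the closure of the ideal generated by all $k_n^{1/p^m}$. An ideal of a perfect ring generated by a set closed under $p$-th roots is radical: if $y^p\in I$ then $y=\phi^{-1}(y^p)\in I$. So $K'$ is closed and radical. By \Cref{remark:dine_isihizuka} it corresponds to a perfectoid ideal of $R_{\mathrm{u}}$, namely the closure of $\theta(W(K'))$. One then checks this closure coincides with $J$, using $\theta(\sum[a_n]p^n)=\sum\theta[a_n]p^n$ and continuity.

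Alternatively, one can argue via \Cref{theorem:eid-tilt-ideals}. The image of $J$ modulo $\varpi^p$ is generated by elements having $p$-th roots, hence is eidetic perfectoid. The corresponding perfectoid ideal is sandwiched between $J$ and $J_{\mathrm{u}}$, and the two agree modulo $\varpi^p$; \Cref{corollary:determined} should then finish. I expect this identification, matching ``closure of the generated ideal'' with the perfectoid ideal attached to $K'$, to be the main point requiring care.

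\textbf{Step 3 (base change).} Given $R$ and the data $(v_i,r_j)$, let $h\colon R_{\mathrm{u}}\to R$ be the classifying map, so $h(r_{\mathrm{u}})=r$. The ring $R'_{\mathrm{u}}\widehat\otimes_{R_{\mathrm{u}}}R$ is perfectoid, as a completed pushout of perfectoid rings: at the level of $A_{\inf}$ it is a completed tensor of perfect prisms, as in the footnote to \Cref{lemma:Andre_analysis}. It also has the universal property of $(R/r)_{\pfd}$. Hence $(r)_{\pfd}$ is the closure of the ideal generated by $h(J_{\mathrm{u}})$, which equals the closure of $(h(f_n)^{1/p^\infty})$. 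Finally, applying $h$ to the identity of Step 1 gives
\[r=\sum_n f_n(v_i,r_j)^{1/p^n}p^n.\]
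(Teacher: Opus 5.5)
Your proposal is correct and is essentially the paper's proof: the paper also works in the universal ring $B=A_0\{\phi^{-\infty}(x)\}$, uses Bhatt's surjectivity to pick a preimage $\sigma$ of $\frac{x}{d}$, takes the Witt components of $\alpha=x-\sigma d$ (your $w$) as the $f_n$, identifies the kernel with the closure of the ideal generated by their $p$-power roots, and then reduces modulo $d$. The differences are minor, with two slips to fix.

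\begin{itemize}
\item You check the generation step on the tilt side, via the closed radical ideal $K'$ and $\theta(W(K'))$, rather than repeating \Cref{example:TeichmullerEnvellope} at the prism level. This works once you use $\sharp(a)=\lim_m \widetilde{a}_m^{\,p^m}$ with lifts $\widetilde{a}_m\in J$ and the closedness of $J$.
\item You spell out the base change to an arbitrary $R$, which the paper leaves implicit.
\item Slip: the ring $\Z_p\abracket{x_i^{1/p^{\infty}},x_1^{-1},t_j^{1/p^{\infty}}}$ is $A_{\mathrm{u}}$ itself, not $A_{\mathrm{u}}/d$. So you should take, e.g., $f_n=[k_n]$.
\item Slip: your alternative argument via \Cref{corollary:determined} does not close as stated, because that corollary presupposes that $J$ is already perfectoid. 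Your first argument is the one to keep.
\end{itemize}
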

\begin{proof}
    Consider the perfect $\delta$-ring $A_0$ from \Cref{remark:universal_oriented_perfect_prism}. Now consider the free perfect and $(p,d)$-complete $\delta$-$A_0$-algebra from \Cref{remark:free_perf_delta_ring}. We denote the resulting ring by $B$. Note that $B$ can be identified with
    \[\Z_p\abracket{x_i^{1/p^{\infty}},x_1^{-1},t_j^{1/p^{\infty}}}_{i,j\in \N}\]
    where the completion is with respect to $(p,x_0)$.
    Now consider the perfect prismatic envelope
    \[B\to B\left\{\phi^{-\infty}\left(\frac{x}{d}\right)\right\}\]
    which is surjective by Bhatt's surjectivity theorem \cite[Proposition 7.11]{bhatt2022Prismsa} (see also the recollection in \Cref{remark:perfectoidization_of_ideals}). Let $\sigma\in B$ be any preimage of $\frac{x}{d}$. Consider $\alpha=x-\sigma d$, which is in the kernel of the above map. By Witt functoriality, we see that all Witt components of $\alpha$ are in the kernel of the above map. Now one can consider the isomorphic $B$-algebra $B\left\{\phi^{-\infty}\left(\frac{\alpha}{d}\right)\right\}$; in this ring, every Witt component of $\alpha$ is zero. Denote these components by $f_n$. Reasoning as in \Cref{example:TeichmullerEnvellope}, we see that the kernel of this map is the $(p,d)$-closure of the ideal generated by $p$-power roots of Witt components of $\alpha$.
    In particular, reducing modulo $d$, we can deduce the claim of the proposition.
\end{proof}

\begin{rem}\label{remark:does_not_depend_from_choices} Note that it is clear from the proof that the power series $f_n$ are not unique or canonical, as they depend on the choice of a preimage $\sigma$ of $\frac{x}{d}$ in $B$. Note also that any such power series $f_n$ in \Cref{prop:UniversalEquations} do not depend on the choices made at the end of \Cref{remark:universal_oriented_perfect_prism}.
    
\end{rem}

\begin{rem}\label{remark:over_an_algebra} Note that in the category of perfectoid rings over a fixed base perfectoid ring $R$, one can make choices for roots of $(pu)$ as at the end of \Cref{remark:universal_oriented_perfect_prism} which remain valid for any perfectoid $R$-algebra. For example, say $R$ is the $p$-completion of $\Z_p[p^{1/p^{\infty}}]$. Then one can choose $u=1$, $v_1=1$, the obvious $p$-power roots of $p$, and $v_i=0$ for $i\geq 2$ as legitimate choices.

    In general, fixing a base perfectoid ring $R$, it follows from \Cref{prop:UniversalEquations} that there are power series (where the completion is $p$-adic)
\[g_n\in R\abracket{t_j^{1/p^{\infty}}}_{j\in \N},\]
indexed by $n\in \N$ such that for every perfectoid $R$-algebra $S$, every element $r\in S$ and any expression of $r$ as $r=\sum_{i=0}^{\infty}\sharp(r_i^{1/p^i})p^i$, we have $$(r)_{\pfd}=\overline{(g_n(r_j)^{1/p^{\infty}})_{n\in \N}}.$$
Also, the evaluations $g_n(r_j)\in S$ admit compatible sequences of $p$-power roots such that
$$r=\sum_{n=0}^{\infty}g_n(r_j)^{1/p^n}p^n.$$
\end{rem}

\begin{rem}\label{remark: pfd_approx_remark} Note that $\theta\colon A_{\inf}(R)\to R$ is surjective, so any element $r$ of a perfectoid ring can be written as a sum of the form
\[r=\sum_{n\in \N}\sharp(r_n)p^n\]
for a sequence $(r_n)_{n\in \N}$ in $R^{\flat}$. One of the key aspects of \Cref{prop:UniversalEquations} is that it provides \emph{preferred approximations} of the element $r$. In \cite[Remark 3.19]{ishizuka2026perfectoidizaitonfinitealgebrasperfectoid}, it is explained that not every element $\sharp(r_0)$ as above is in $(r)_{\pfd}$ and that \Cref{prop:UniversalEquations} provides only very special approximations.

% Note that understanding how to produce these special approximations of elements would solve the question of how to construct $I_{\pfd}$ from the ideal $I$.
    
\end{rem}

We conclude this section by pointing out that the existence of power series $f_n$ as in \Cref{prop:UniversalEquations} is equivalent to Bhatt's surjectivity theorem \cite[Proposition 7.11]{bhatt2022Prismsa}.

\begin{proposition}\label{proposition:equations_equivalent_to_Bhatt} Let $R$ be a perfectoid ring and $I\subseteq R$ be an ideal. The following statements are equivalent. 
    \begin{enumerate}
        \item The map $R\to (R/I)_{\pfd}$ is surjective. (Bhatt's surjectivity theorem \cite[Proposition 7.11]{bhatt2022Prismsa})
        \item For every $r\in R$ and every map $R\to S$ to a perfectoid ring $S$ such that $r$ goes to zero, there exists an expression $r=\sum_{i=0}^{\infty}\sharp(r_i^{1/p^i})p^i$ such that for every $i\in \N$, $\sharp(r_i)$ goes to zero in $S$.
    \end{enumerate}
    
\end{proposition}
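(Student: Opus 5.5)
We prove both implications separately; the direction (1)$\Rightarrow$(2) is formal, while (2)$\Rightarrow$(1) goes through \Cref{example:TeichmullerEnvellope}. Throughout, we read (2) as quantified over all perfectoid $R$ and all ideals $I$, as the surjectivity theorem is.

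\textbf{From (1) to (2).} Let $r\in R$ map to zero under $g\colon R\to S$ with $S$ perfectoid. Then $g$ kills $r$, so it factors through $R\to R/(r)_{\pfd}$, and it suffices to treat $S=R/(r)_{\pfd}$. By (1), the kernel $(r)_{\pfd}$ is a perfectoid ideal, and we have the surjection $R\to R/(r)_{\pfd}$. We now argue exactly as in \Cref{lemma:approximations}, or as in the proof of \Cref{prop:UniversalEquations} but for this single ring. The map $\theta\colon A_{\inf}(R)\to R$ is surjective, and the induced map $A_{\inf}(R)\to A_{\inf}(R/(r)_{\pfd})$ is surjective, since tilts of surjections of perfectoid rings are surjective (\Cref{remark:ideal_correspondence_more_explicitly}). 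Its kernel $K$ satisfies $K\cap \ker\theta_R = d\cdot(\text{stuff})$, so a lift $a\in A_{\inf}(R)$ of $r$ can be adjusted by a multiple of $d$ until it lies in $K$. Indeed, the image of $a$ in $A_{\inf}(R/(r)_{\pfd})$ lies in $\ker\theta = d\,A_{\inf}(R/(r)_{\pfd})$, and we lift that multiple of $d$ back. Writing the adjusted $a=\sum [c_i]p^i$ in Witt/Teichmüller coordinates, $K$ is the set of Witt vectors whose coordinates all lie in $\ker(R^\flat\to(R/(r)_{\pfd})^\flat)$. Hence each $c_i$ maps to zero, and $r=\theta(a)=\sum \sharp(c_i)p^i$. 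Setting $r_i=c_i^{p^i}$ gives the required expression.

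\textbf{From (2) to (1).} This is the substantive direction. Let $I\subseteq R$ and write $C=A\{\phi^{-\infty}(I'/d)\}$ as in \Cref{lemma:initexist}. We show $R\to C/dC$ is surjective. The strategy is to use (2) to replace each generator of $I'$ by Teichmüller-type elements that already die in $C/dC$. For $r\in I$, apply (2) to $S=C/dC$ to obtain $r=\sum\sharp(r_i^{1/p^i})p^i$ with each $\sharp(r_i)\mapsto 0$ in $S$. Then $a_r:=\sum[r_i^{1/p^i}]p^i$ is a lift of $r$ to $A$.

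The key point is that each $[r_i]$ maps into $dC$. Since $\delta([r_i])=0$ and $C$ is a perfect $(p,d)$-complete $\delta$-ring, the Claim of \Cref{example:TeichmullerEnvellope} forces $[r_i]\mapsto 0$ in $C$, and so all roots $[r_i^{1/p^n}]$ vanish in $C$ as well. Let $J\subseteq R$ be the $p$-adic closure of the ideal generated by all $\sharp(r_i^{1/p^n})$ for $r\in I$. Then $J\supseteq I$, and by \Cref{example:TeichmullerEnvellope} together with \Cref{remark:initexist}, $R/J$ is perfectoid, namely $A/([r_i^{1/p^\infty}])^\wedge$ modulo $d$. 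The map $R\to C/dC$ factors through $R/J$, since everything in $J$ dies. Conversely, $R\to R/J$ kills $I$, so the universal property gives $C/dC\to R/J$. Both composites are identities by the universal properties. Hence $C/dC\cong R/J$ is a quotient of $R$, proving surjectivity.

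\textbf{Main obstacle.} The delicate step is the Witt-coordinate bookkeeping in (1)$\Rightarrow$(2): we must check that the kernel of $A_{\inf}(R)\to A_{\inf}(S')$ consists exactly of Witt vectors with coordinates in the tilted kernel, and that the correction by a multiple of $d$ can be made inside $A_{\inf}(R)$. This requires surjectivity of $A_{\inf}(R)\to A_{\inf}(S')$, which holds because tilting preserves surjections between perfectoid rings. In (2)$\Rightarrow$(1), one must also check that the identification $C/dC\cong R/J$ is compatible with $p$-adic closures. This should follow from $C$ being $(p,d)$-complete, together with the description in \Cref{example:TeichmullerEnvellope} of the envelope of Teichmüller elements as the $(p,d)$-completed quotient.
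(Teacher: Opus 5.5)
Your proof is correct. In each direction it uses the same core mechanism as the paper, but organized differently.

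\textbf{(1)$\Rightarrow$(2).} The paper just invokes \Cref{prop:UniversalEquations}. There, the step ``correct a lift of $r$ by a multiple of $d$ so that it dies in the target, then read off Witt components'' is carried out once in the universal ring $B$, with Bhatt's theorem applied there, and then specialized. You run the same step directly on $A_{\inf}(R)\to A_{\inf}(R/(r)_{\pfd})$, using that tilting preserves surjections and that $\ker W(f)=W(\ker f)$. Your version is more elementary and needs (1) only for principal ideals of $R$ itself. The paper's route buys the stronger conclusion that the expressions come from universal power series.

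\textbf{(2)$\Rightarrow$(1).} The paper reduces to finitely generated ideals and then, by induction, to principal ones. That reduction passes through intermediate rings such as $(R/(r_1))_{\pfd}$ and needs a colimit argument. It then cites \cite[Proposition 2.1.11 (c)]{cesnavicius2024purity} for the perfectoidness of $R/J$. You instead apply (2) to all $r\in I$ at once with $S=C/dC$ and run the same section argument. This shows directly that (2) for $R$ alone gives (1) for every ideal of $R$.

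Two small points on your write-up:
\begin{enumerate}
\item Killing $[r_i]$ in $C$ via the Claim of \Cref{example:TeichmullerEnvellope} is unnecessary. The ring $C/dC$ is reduced, and $\sharp(r_i^{1/p^n})^{p^n}=\sharp(r_i)\mapsto 0$.
\item The closure check you flag concerns the identification of $R/J$ with $Q/dQ$, where $Q=(A/([r_i^{1/p^\infty}]))^{\wedge}$, not $C/dC$. The kernel of $R\to Q/dQ$ contains $J$, since the generators $\sharp(r_i^{1/p^n})$ die and $Q/dQ$ is $p$-adically separated. It is contained in $J$: after subtracting a multiple of $d$ and lifting along the surjection $A\to Q$, any element of the kernel is $\theta$ of an element in the $(p,d)$-adic closure of the ideal generated by the Teichm\"uller lifts, and $\theta$ is continuous.
\end{enumerate}
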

\begin{proof}
    The implication from (1) to (2) follows from \Cref{prop:UniversalEquations}, where it is clear from the proof that we used Bhatt's surjectivity theorem. To prove that (2) implies (1), we can reduce to the case where $I$ is finitely generated and then, by induction, to the case where it is generated by one element. Then we can take $S=(R/(r))_{\pfd}$; now $\sharp(r_i)$ goes to zero in $S$ by assumption. Now notice that $R/\overline{(\sharp(r_i)^{1/p^{\infty}})_{i\in \N}}$ is a perfectoid $R$-algebra (see, for example, \cite[Proposition 2.1.11 (c)]{cesnavicius2024purity}) where $r=0$, so the map $R/\overline{(\sharp(r_i)^{1/p^{\infty}})_{i\in \N}}\to (R/(r))_{\pfd}$ admits a section, providing the desired surjectivity.
\end{proof}

% \subsection{Equations from perfected prismatic envelopes}\label{section:Ktheory}

The perspective of perfected prismatic envelopes allows us to give an alternative description of the perfectoidization of ideals. As we explain in \Cref{example:Teichmuller_case_Ktheory}, the equations obtained in \cite[Lemma 3.6]{antieau2024ktheorymathbfzpn} for prismatic envelopes highlight the role of completion in Bhatt's surjectivity theorem, even in the simplest case of elements admitting compatible sequences of $p$-power roots.
% \begin{rem}\label{remark:K-theoryequ}In this remark, we collect notations and results from \cite[Lemma 3.6]{antieau2024ktheorymathbfzpn}.
    
% \end{rem}

\begin{example}\label{example:Teichmuller_case_Ktheory}

% Let's fix $(A,d)$ perfect prism, with a distinguished element that we may take of the form $d=[\varpi^{\flat}]+pu$ and $r\in R=A/d$, $a\in A$ be any lift of $r$, write $s=a \mod p \in R^{\flat}$. 

Let $r\in R$ and let $a\in A$ be any lift of $r$. Using the tilting equivalence and \Cref{remark:initexist}, together with the notation from \Cref{remark:free_perf_delta_ring}, we see that
\[\left(R/(r)_{\pfd}\right)^{\flat}\cong \left(R^{\flat}[t_i^{1/p^{\infty}}]/(\delta^i(dx_0-a) \mod p)^{1/p^{\infty}}\right)^{\varpi^{\flat}\wedge}.\]

We can use equations from \cite[Lemma 3.6]{antieau2024ktheorymathbfzpn} to describe the right-hand side. Let's focus first on the case
\[\delta(a)=0\]
which is the case when $a$ is a Teichmüller lift of an element in $R^{\flat}$. The goal of this example is to highlight the role of completion in the computation and to show how to get directly to the expected form of the perfectoidization purely from the description of the prismatic envelope in \cite[Lemma 3.6]{antieau2024ktheorymathbfzpn}.

% We follow the notations from \cite[Lemma 3.6]{antieau2024ktheorymathbfzpn}. 
In this case (see \cite[Lemma 3.6, Remark 3.7]{antieau2024ktheorymathbfzpn}) we see that the $\delta$-ideal generated by $dx_0-a$ is given modulo $p$ by
\[s=\varpi^{\flat} t_0 \quad t_i^p=\varpi^{\flat p^{i+1}}\lambda_i t_{i+1}\]
where $\lambda_i$ is a unit in $R^{\flat}$ and $s=a \mod p$. We first show that, without completion, the following holds:
\[R^{\flat}[t_i^{1/p^{\infty}}]/(\delta^i(dx_0-a) \mod p)^{1/p^{\infty}}=\left(R^{\flat}\left[\frac{s^{p^n}}{\varpi^{\flat (n+1)p^{n}}}\right]\right)_{\perf}.\]
% I hope that this is can be explained by prismatic to crystalline comparison the right hand side looking like a (perfect) PD-hull of some sort. 
We prove by induction on $i\geq 0$ that
\[s^{p^i}=\varpi^{\flat (i+1)p^i}\sigma_i t_i\]
where $\sigma_i$ is a unit in $R^{\flat}$. Let's prove the induction step. If $s^{p^i}=\varpi^{\flat (i+1)p^i}\sigma_i t_i$, then
\[s^{p^{i+1}}=\varpi^{\flat (i+1)p^{i+1}}\sigma_i^p \underbrace{t_i^p}_{=\varpi^{\flat p^{i+1}}\lambda_i t_{i+1}}=\varpi^{\flat (i+2)p^{i+1}}\sigma_i^p\lambda_it_{i+1}\]
which proves our claim. 

Note, crucially, that therefore
\[s=\varpi^{\flat (i+1)}\sigma_i^{1/p^i}t_i^{1/p^i}.\]
In other words, $s$ is infinitely divisible by $\varpi^{\flat}$. This implies that $s$ is zero in the completion.

More precisely, for any $j\geq 0$, we can show by induction on $i\geq 0$ that
\[t_j^{p^i}=\varpi^{\flat i(p^{i+j})}u_{ij}t_{i+j}\]
where $u_{ij}\in R^{\flat \times}$. The base case is an equality. The induction step is similar to that of the previous induction. We have
\[t_j^{p^{i+1}}=\varpi^{\flat i(p^{i+j+1})}u_{ij}^pt_{i+j}^p=\varpi^{\flat (i+1)(p^{i+j+1})}u_{ij}^p\lambda_{i+j}t_{i+j+1}.\]
Now for every $j\geq 0$, we have
\[t_j^{p^{-j}}=\varpi^{\flat i}u_{ij}^{1/p^{i+j}}t_{i+j}^{1/p^{i+j}}.\]
Therefore $t_j^{p^{-j}}$ is infinitely $\varpi^{\flat}$-divisible. This implies that 
\[R^{\flat}\to \left(R^{\flat}\left[\frac{s^{p^n}}{\varpi^{\flat (n+1)p^n}}\right]\right)^{\varpi \wedge}_{\perf}\]
is surjective. Now, because we have shown that $t_i^{1/p^j}$ 
for $i\geq 0$ and $j\geq 0$ are in the kernel of the completion
\begin{align*}
    &\left(R^{\flat}[t_i^{1/p^{\infty}}]/(\delta^i(dt_0-a) \mod p)^{1/p^{\infty}}\right)^{\varpi^{\flat}\wedge}\\
    &=\left(R^{\flat}[t_i^{1/p^{\infty}}]/(s=\varpi t_0 \quad t_i^p=\varpi^{\flat p^{i+1}}\lambda_it_{i+1})^{1/p^{\infty}}\right)^{\varpi^{\flat}\wedge}
\end{align*}
% \[\left(R^{\flat}[t_i^{1/p^{\infty}}]/(\delta^i(dt_0-a) \mod p)^{1/p^{\infty}}\right)^{\varpi^{\flat}\wedge}=\left(R^{\flat}[t_i^{1/p^{\infty}}]/(s=\varpi t_0 \quad t_i^p=\varpi^{\flat p^{i+1}}\lambda_it_{i+1})^{1/p^{\infty}}\right)^{\varpi^{\flat}\wedge}\]
we see that this completion is the $\varpi^{\flat}$-completion of
\[R^{\flat}/(s)^{1/p^{\infty}},\]
which is the desired result. Note also that the above gives yet another independent proof of the form of the perfectoidization in the special case of elements admitting compatible sequences of $p$-power roots, as in \Cref{example:eid-detailled}.
\end{example}

% The following point out a general difficulty of computing perfecoidization of ideals. Namely, the perfectoidization of an ideal is always related to a subtle phenomenon related to ($p$-)completion.

% \begin{lemma}\label{lemma:derived_observation}Let $R$ be a $p$-torsion free perfectoid ring and $r\in R$ some element.
    
% \end{lemma}

\section{A computational approach to approximating a perfectoid ideal}\label{section:program}

In this section, we concentrate our attention on the following example
\[(t-1)_{\pfd}\subset \Z_p^{\cycl}\abracket{t^{1/p^{\infty}}}.\]

 We know from \cite[Proposition 3.18]{ishizuka2026perfectoidizaitonfinitealgebrasperfectoid} that the perfectoidization of the semiperfectoid ring $\Z_p^{\cycl}\abracket{t^{1/p^{\infty}}}/(t-1)$ is given by the evaluation map
\[\Z_p^{\cycl}\abracket{t^{1/p^{\infty}}}/(t-1)\to \Cacst{p}(\Z_p,\Z_p^{\cycl})\]
sending $t^{1/p^n}$ to the function
$$\Z_p\to \Z/p^n\Z\xrightarrow{i\mapsto \xi_{p^n}^i}\Z_p^{\cycl}.$$

We can look at the reduction modulo $\xi_p-1$ of this map, which reads, using \Cref{example:eid-detailled}, as the $\F_p[x^{1/p^{\infty}}]/(x)$-algebra morphism, 
\[E\colon \F_p[x^{1/p^{\infty}},y^{1/p^{\infty}}]/(x,y)\to \Cacst{p}(\Z_p,\F_p[x^{1/p^{\infty}}]/(x))\]
where now $y^{1/p^n}$ is sent to the function\footnote{To see the index shift, recall that $x=x^{1/p^0}$ is the class of $\xi_p-1$.}
$$\Z_p\to \Z/p^n\Z\xrightarrow{i\mapsto (1+x^{1/p^{n-1}})^i-1} \F_p[x^{1/p^{\infty}}]/(x).$$

By \Cref{remark:ideal_correspondence_more_explicitly}, we can search for elements in $\F_p[x^{1/p^{\infty}},y^{1/p^{\infty}}]/(x)$ whose images under the map $E$ are zero and which form a compatible system of $p$-power roots of $y$ in $\F_p[x^{1/p^{\infty}},y^{1/p^{\infty}}]/(x)$. Using \Cref{remark:ideal_correspondence_more_explicitly}, we see that this process produces $p$-adic approximations of an element in $(t-1)_{\pfd}$ which is congruent to $t-1$ modulo $\xi_p-1$.

\subsection{A truncated version of the problem}

We introduce some notation.
\begin{notation}\label{notation:setup_program}
We can truncate the problem at finite steps in order to make it amenable to computation on a computer. Namely, fix $M,L\in \N$. Set
\begin{itemize}
    \item $B_L=\mathbb{F}_p[a]/(a^{p^L})$
    \item $A _{L,M}= B_L[Y]/(Y^{p^M})$
\end{itemize}
where 
\begin{itemize}
    \item $a$ corresponds to $x^{1/p^L}\in \F_p[x^{1/p^{\infty}}]/(x)$,
    \item and $Y$ corresponds to $y^{1/p^M}\in \F_p[x^{1/p^{\infty}},y^{1/p^{\infty}}]/(x,y)$.
\end{itemize}
  Note that $B_L\subseteq \F_p[x^{1/p^{\infty}}]/(x)$ and $A_{L,M}\subseteq \F_p[x^{1/p^{\infty}},y^{1/p^{\infty}}]/(x,y)$, and that both these rings are filtered colimits of the $B_L$'s and $A_{L,M}$'s respectively. 

For $L\geq M-1$, we can consider the $B_L$-algebra map
\[E_{L,M}\colon A_{L,M}\to B_L^{p^M}\]
sending $Y\mapsto ((1+a^{p^{L-M+1}})^i-1)_{i=0,\dots,p^M-1}$. This is a truncated version of the map $E$ above.

Now, for fixed $L,M\in \N$, the map $E_{L,M}$ is an $\F_p$-linear map between finite-dimensional $\F_p$-vector spaces. The same goes for the Frobenius morphism on $A_{L,M}$.
\end{notation}  

\subsubsection{Finding approximations of an element congruent to $t-1$ in $(t-1)_{\pfd}$ that admit a compatible sequence of $p$-power roots.}\label{section:algori}

In the setup of \Cref{notation:setup_program}, we are now interested in a question of linear algebra over a finite field that we explain next. For some $n\in \N$, with $n\leq M,L$, which is fixed for this discussion, we are searching for elements $\alpha_n\in A_{L,M}$ with 
\begin{enumerate}
    \item $E_{L,M}(\alpha_n)=0$,
    % \item $\alpha_n^{p^n}=0$,
    \item $\alpha_n\equiv Y^{p^{M-n}} \mod a^{p^{L-n}}$
\end{enumerate}
In terms of the map (embedding the truncations in the context below)
\[E\colon \F_p[x^{1/p^{\infty}},y^{1/p^{\infty}}]/(x,y)\to \Cacst{p}(\Z_p,\F_p[x^{1/p^{\infty}}]/(x))\]
we are searching for elements in the kernel of this map (condition (1)) such that when lifting them to $\F_p[x^{1/p^{\infty}},y^{1/p^{\infty}}]/(x)$ they are congruent to $y^{1/p^n}$ modulo $x^{1/p^n}$ (condition (2)). Note that when lifting such an element in the unique class of $y$-degree $d$ satisfying $1/p^M\leq d< 1$ to $\F_p[x^{1/p^{\infty}},y^{1/p^{\infty}}]/(x)$, we get an element $f_n\in \F_p[x^{1/p^{\infty}},y^{1/p^{\infty}}]/(x)$ which is in the kernel of $E$ and such that
\begin{enumerate}
    \item $E(f_n)=0$,
    \item $f_n^{p^n}=y\in \F_p[x^{1/p^{\infty}},y^{1/p^{\infty}}]/(x)$.
\end{enumerate}
 
Using the notation of \Cref{lemma:approximations}, this is therefore an element that fits as an $i_n$ for $i_0=y$. Namely, let $i_m$ be determined by $f_n$ if $m\leq n$, and pick arbitrary $p$-power roots in the ideal to complete the sequence (to complete the argument and match the situation in \Cref{lemma:approximations}, we just care about the existence of the rest of the sequence).

Further translating in terms of the original map 
\[\Z_p^{\cycl}\abracket{t^{1/p^{\infty}}}/(t-1)\to \Cacst{p}(\Z_p,\Z_p^{\cycl}),\]
if we take any lift of the above $f_n$ in $\Z_p^{\cycl}\abracket{t^{1/p^{\infty}}}$, which we denote by $\widetilde{f}_n$, we get using \Cref{lemma:approximations} that
\[\widetilde{f}_n^{p^n}=\alpha \mod (\xi_p-1)^{n+1}\]
and $\alpha \in (t-1)_{\pfd}$ is an element with $\alpha=t-1 \mod (\xi_p-1)$, as in \Cref{lemma:approximations}.
Therefore, when computing this exponentiation, we can simplify the calculation modulo $(\xi_p-1)^{n+1}$.

% Therefore, we can write a SageMath program that searches for elements in the kernel of $E_{L,M}$ with the property that they are zero when raised to the $p^M$-th power.

% Such a SageMath program is available at the following \href{https://github.com/leo-nc-git/perfectoid_kernel}{GitHub repository}.

It turns out that such a problem has exponential growth and is very hard to solve in general. On the author's personal computer, using the SageMath code available at this \href{https://github.com/leo-nc-git/perfectoid_kernel}{GitHub repository}, the following results have been found.

\begin{proposition}\label{prop:kernel_results_2_2}For $p=2$, a solution $f_2\in \F_2[x^{1/2^{\infty}},y^{1/2^{\infty}}]/(x)$ exists at the truncated level $M=4$ and $L=3$. There are \emph{no solutions} at lower levels $M<4$ or $L<3$.
 One of the solutions is, for example,
\[
\begin{aligned}
f_2={}&x^{1/2}y^{1/16}+x^{5/8}y^{1/16}\\
&+x^{1/4}y^{1/8}+x^{3/8}y^{1/8}\\
&+x^{3/8}y^{3/16}+y^{1/4}.
\end{aligned}
\]
As a consequence, taking a lift $\widetilde{f}_2\in \Z_2^{\cycl}\abracket{t^{1/2^{\infty}}}$ (see \Cref{example:eid-detailled} for the notation) and computing its fourth power gives the following expression modulo $8$:
% This is the lift (t^{1/16}-1)^4+(zeta-1)^3(t^{1/16}-1)^3
% +((zeta-1)^2+(zeta-1)^3)(t^{1/16}-1)^2+((zeta-1)^4+(zeta-1)^5)(t^{1/16}-1).
\[
\begin{aligned}
\widetilde{f}_2^{\,2^2}\equiv{}&t-1\\
&+2t^{1/8}\Bigl(1-\xi_{16}^2-\xi_{16}^4-\xi_{16}^6
 +(1+\xi_{16}^2+\xi_{16}^4+\xi_{16}^6)t^{1/4}\\
&\qquad +(1+\xi_{16}^2-\xi_{16}^4+\xi_{16}^6)t^{1/2}
 +(1-\xi_{16}^2+\xi_{16}^4-\xi_{16}^6)t^{3/4}\Bigr)\\
&+4(1+t^{1/2})\Bigl(\xi_{16}^2+\xi_{16}^3+\xi_{16}^4+\xi_{16}^7
 +(\xi_{16}^2+\xi_{16}^4+\xi_{16}^5+\xi_{16}^7)t^{1/16}\\
&\qquad +(\xi_{16}+\xi_{16}^5)(t^{1/8}+t^{3/8})
 +(1+\xi_{16}^2+\xi_{16}^5+\xi_{16}^7)t^{3/16}\\
&\qquad +(\xi_{16}^3+\xi_{16}^4+\xi_{16}^6+\xi_{16}^7)t^{1/4}
 +(\xi_{16}+\xi_{16}^2+\xi_{16}^3+\xi_{16}^4)t^{5/16}\\
&\qquad +(1+\xi_{16}+\xi_{16}^2+\xi_{16}^3)t^{7/16}\Bigr)\pmod{8}.
\end{aligned}
\]
This element is congruent modulo $8$ to some $\alpha\in (t-1)_{\pfd}$ such that $t-1\equiv\alpha\mod 2$ and $\alpha$ admits a compatible sequence of $p$-power roots.

\end{proposition}

\begin{rem}\label{remark:github}Let us explain more about the \href{https://github.com/leo-nc-git/perfectoid_kernel}{SageMath script}. In this public repository, a script is provided that allows one to reproduce the calculations and results discussed in \Cref{prop:kernel_results_2_2}, for example. The author used this SageMath script to get the above results. Namely, it implements the algorithm described in \Cref{section:algori} to search for an $f_n$ and a $\widetilde{f}_n^{\,p^n}$, which is directly computed modulo $(\xi_p-1)^{n+1}$ to simplify the computations.

One can see by experimenting, as the problem is exponential, that it very rapidly becomes infeasible on an ordinary computer.
\end{rem}

\begin{rem}\label{remark:number_of_solutions}The number of solutions for an $f_2$ at level $M=4$ and $L=3$ is already huge. Namely, there are already $2^{80}$ solutions at this level. It is revealing of the computational complexity of the problem that at smaller levels, there are no solutions, and at the first level where there is a solution it explodes to a relatively large number.

\end{rem}

\begin{rem}\label{remark:other_results} Other solutions are available in the above-mentioned \href{https://github.com/leo-nc-git/perfectoid_kernel}{GitHub repository}. Namely, for $p=3$, we also find solutions for $n=2$ at $M=5$, $L=4$. They are so large in terms of the number of non-zero coefficients that it would be impractical to include them in this paper. 

We did not find any solution for $n>1$ for primes $p>3$. For $p=2$ and $p=3$, no solutions were found for $n>2$. For $p=2$, we tested up to $M=8$ and $L=7$ for $n=3$, where the calculations already took hours and no solution was found.
\end{rem}

We also explain that no finite approximation of some $\alpha$ as in \Cref{section:algori}  and \Cref{prop:kernel_results_2_2} can ever be in $(t-1)_{\pfd}$ already: a non-trivial infinite power series is really needed.

\begin{proposition}\label{prop:no_finite_approximation} Let $R$ be a perfectoid $p$-torsion-free $\Z_p^{\cycl}$-algebra. Identify as a module
\[R\abracket{t^{1/p^{\infty}}}/(t-1)\to \widehat{\bigoplus}_{\gamma\in \Z[1/p]\cap [0,1[} Rt^{\gamma}\]
to represent elements in this quotient. Then there is no finite sum in the image of $(t-1)_{\pfd}$.
\end{proposition}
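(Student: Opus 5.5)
The plan is to detect elements of $(t-1)_{\pfd}$ by mapping to a single explicit perfectoid ring in which $t-1$ vanishes. Any such map kills $(t-1)_{\pfd}$ by the universal property in \Cref{lemma:initexist} and \Cref{remark:perfectoidization_semipfd}. This avoids needing the full description of the perfectoidization over $R$. We only use the easy inclusion of $(t-1)_{\pfd}$ into the kernel of the test map, not equality.

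\textbf{Step 1: the test ring.} Let $\Cacst{p}(\Z_p,R)$ denote continuous functions $\Z_p\to R$, and define the $R$-algebra map
\[
\mathrm{ev}\colon R\abracket{t^{1/p^{\infty}}}\to \Cacst{p}(\Z_p,R),\qquad t^{1/p^n}\mapsto \bigl(i\mapsto \xi_{p^n}^{\,i}\bigr).
\]
Here $i$ is read modulo $p^n$, and the $\xi_{p^n}$ are a compatible system of roots of unity in $\Z_p^{\cycl}\to R$. This map is well defined on the completed algebra because the target is $p$-complete, and it sends $t-1$ to $0$. The target is perfectoid. It is the $p$-completion of a filtered colimit of finite products $R^{p^n}$, hence $p$-complete, $p$-torsion-free, and with Frobenius surjective modulo $p$ (the same $\varpi$ as for $R$ works). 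It is also reduced, so by the universal property $(t-1)_{\pfd}\subseteq\ker(\mathrm{ev})$.

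\textbf{Step 2: a finite sum in the kernel.} Suppose a finite sum $s=\sum_{\gamma\in F}c_\gamma t^\gamma$, with $F\subset \Z[1/p]\cap[0,1[$ finite and $c_\gamma\in R$, is the image of an element of $(t-1)_{\pfd}$. Lifting $s$ to the same finite sum in $R\abracket{t^{1/p^\infty}}$ changes it only by a multiple of $t-1$, which $\mathrm{ev}$ kills. Hence $\mathrm{ev}(s)=0$. Choose $n$ with $p^nF\subset\Z$ and write $k_\gamma=p^n\gamma$; these are \emph{distinct} integers in $[0,p^n[$. Then $\mathrm{ev}(s)$ is the function $i\mapsto\sum_\gamma c_\gamma\zeta^{k_\gamma i}$ with $\zeta=\xi_{p^n}$, and it factors through $\Z/p^n$.

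\textbf{Step 3: orthogonality.} For a fixed $k=k_{\gamma_0}$, multiply the vanishing identity by $\zeta^{-ki}$ and sum over $i\in\Z/p^n$. The character sum $\sum_i\zeta^{(k_\gamma-k)i}$ equals $p^n$ if $k_\gamma=k$. If $k_\gamma\neq k$ it equals $0$, since for $m=k_\gamma-k$ with $0<|m|<p^n$ the element $\zeta^m$ is a nontrivial $p^j$-th root of unity and $\Phi$-type sums vanish (already true in $\Z_p^{\cycl}$). We get $p^nc_{\gamma_0}=0$, hence $c_{\gamma_0}=0$ by $p$-torsion-freeness of $R$. Thus $s=0$, and no nonzero finite sum lies in the image of $(t-1)_{\pfd}$.

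\textbf{Main obstacle.} The only point needing care is Step 1: verifying that $\Cacst{p}(\Z_p,R)$ is perfectoid and that $\mathrm{ev}$ is continuous and well defined. I would do this directly as indicated, rather than invoking the precise computation of the perfectoidization from \cite[Proposition 3.18]{ishizuka2026perfectoidizaitonfinitealgebrasperfectoid}, which is unnecessary here.
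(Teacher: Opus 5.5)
Your proof is correct, and its skeleton is the paper's. You test against $\Cacst{p}(\Z_p,R)$, note that the image of a finite sum factors through $\Z/p^n\Z$ (the paper's injective right vertical arrow), and conclude from injectivity of $R[t^{1/p^n}]/(t-1)\to R^{\Z/p^n\Z}$.

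The difference is in how the two inputs are justified:
\begin{itemize}
\item The paper identifies the horizontal arrows with perfectoidization maps, citing \cite[Theorem 3.9, Proposition 3.18]{ishizuka2026perfectoidizaitonfinitealgebrasperfectoid}. It gets injectivity of the finite-level arrow from perfectoidization being a $p$-isogeny there \cite[Theorem 10.9]{bhatt2022Prismsa}, together with $p$-torsion-freeness.
\item You use only the easy half: $\Cacst{p}(\Z_p,R)$ is \emph{some} perfectoid ring in which $t-1$ vanishes, so $(t-1)_{\pfd}$ dies there by the universal property. You then prove the finite-level injectivity directly by character orthogonality.
\end{itemize}
Your route is more elementary and self-contained. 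In fact your computation is an explicit witness of the $p$-isogeny: the inverse transform $\delta_i\mapsto\sum_k\zeta^{-ki}t^{k/p^n}$ composed with evaluation is multiplication by $p^n$ in either order. The paper's route instead places the statement inside the general theory of perfectoidizations of finite algebras. It uses the exact description of the perfectoidization, which, as you observe, is more than is needed here.

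One minor imprecision in your Step 1: ``$p$-torsion-free, reduced, Frobenius surjective mod $p$'' is not the right perfectoidness criterion, since reducedness does not give injectivity of Frobenius modulo $\varpi$. What you want is that $(-)^p\colon R^{p^n}/\varpi\to R^{p^n}/\varpi^p$ is an isomorphism factorwise. This property is preserved by the filtered colimit, and also by $p$-completion, which leaves these quotients unchanged because $p\in\varpi^pR$.
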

\begin{proof}Suppose that there is such a polynomial $f\in R[t^{1/p^n}]$ for some fixed $n\in \N$. Note that we have the following commutative diagram
\[\begin{tikzcd}
{R[t^{1/p^n}]/(t-1)} \arrow[d] \arrow[r]     & {\Cacst{p}(\Z/p^n\Z,R)} \arrow[d] \\
R\abracket{t^{1/p^{\infty}}}/(t-1) \arrow[r] & {\Cacst{p}(\Z_p,R)}              
\end{tikzcd}\]
where the horizontal arrows are the perfectoidization maps (see \cite[Theorem 3.9, Proposition 3.18]{ishizuka2026perfectoidizaitonfinitealgebrasperfectoid}). The right vertical arrow is injective -- the top horizontal arrow is also injective because those rings are $p$-torsion-free and the perfectoidization map is a $p$-isogeny in this case by \cite[Theorem 10.9]{bhatt2022Prismsa}. 

If such an $f$ as above existed, then it would go to zero along those injective maps, implying that $f$ is zero already in the quotient, as desired.
\end{proof}

\nocite{*}
\printbibliography

@article{bhatt2022Prismsa,
  title = {Prisms and Prismatic Cohomology},
  author = {Bhatt, Bhargav and Scholze, Peter},
  year = {2022},
  journal = {Annals of Mathematics},
  volume = {196},
  number = {3},
  pages = {1135--1275},
  publisher = {Department of Mathematics of Princeton University},
  url = {https://projecteuclid.org/journals/annals-of-mathematics/volume-196/issue-3/Prisms-and-prismatic-cohomology/10.4007/annals.2022.196.3.5.full}
}

@misc{drinfeld2021notionringgroupoid,
      title={On a notion of ring groupoid}, 
      author={Vladimir Drinfeld},
      year={2021},
      eprint={2104.07090},
      archivePrefix={arXiv},
      primaryClass={math.CT},
      url={https://arxiv.org/abs/2104.07090}, 
}

@misc{antieau2024ktheorymathbfzpn,
      title={On the $K$-theory of $\mathbf{Z}/p^n$}, 
      author={Benjamin Antieau and Achim Krause and Thomas Nikolaus},
      year={2024},
      eprint={2405.04329},
      archivePrefix={arXiv},
      primaryClass={math.KT},
      url={https://arxiv.org/abs/2405.04329}, 
}

@article{BSMF_1978__106__225_0,
     author = {Fresnel, Jean and De Mathan, Bernard},
     title = {Alg\`ebres $L^1$ $p$-adiques},
     journal = {Bulletin de la Soci\'et\'e Math\'ematique de France},
     pages = {225--260},
     year = {1978},
     publisher = {Soci\'et\'e math\'ematique de France},
     volume = {106},
     doi = {10.24033/bsmf.1871},
     mrnumber = {80d:12014},
     zbl = {0392.46050},
     language = {fr},
     url = {https://www.numdam.org/articles/10.24033/bsmf.1871/}
}

@article{Scholze2015,
  author  = {Scholze, Peter},
  title   = {On torsion in the cohomology of locally symmetric varieties},
  journal = {Annals of Mathematics},
  year    = {2015},
  volume  = {182},
  number  = {3},
  pages   = {945--1066},
  doi     = {10.4007/annals.2015.182.3.3}
}

@book{FarguesFontaine2018,
  title     = {Courbes et fibrés vectoriels en théorie de Hodge $p$-adique},
  author    = {Fargues, Laurent and Fontaine, Jean-Marc},
  series    = {Astérisque},
  volume    = {406},
  publisher = {Société Mathématique de France},
  year      = {2018},
  isbn      = {978-2-85629-896-1},
  url       = {https://emath.fr}
}

@misc{bhatt2025Aspects,
  author = {Bhatt, Bhargav},
  title  = {Aspects of {$p$}-Adic {H}odge {T}heory},
  year   = {2025},
  url    = {https://www.math.ias.edu/~bhatt/teaching/mat517f25/pHT-notes.pdf},
  note   = {Lecture notes}
}

@article{cesnavicius2024purity,
  author = {{\v C}esnavi{\v c}ius, K{\k e}stutis and Scholze, Peter},
  title    = {Purity for flat cohomology},
  journal  = {Annals of Mathematics},
  volume   = {199},
  number   = {1},
  pages    = {51--182},
  year     = {2024},
  doi      = {10.4007/annals.2024.199.1.2},
  url      = {https://annals.math.princeton.edu/2024/199-1/p02}
}

@article{BMS18,
  author   = {Bhatt, Bhargav and Morrow, Matthew and Scholze, Peter},
  title    = {Integral {$p$}-adic {H}odge theory},
  journal  = {Publications math{\'e}matiques de l'IH{\'E}S},
  volume   = {128},
  pages    = {219--397},
  year     = {2018},
  doi      = {10.1007/s10240-019-00102-z},
  note     = {MR 3905467, Zbl 1446.14011}
}

@misc{ishizuka2026decompositiontheoremperfectoidrings,
      title={Decomposition Theorem for Perfectoid Rings along General Ideals}, 
      author = {Ishizuka, Ryo and Navarro Chafloque, L{\'e}o},
      year={2026},
      eprint={2606.06241},
      archivePrefix={arXiv},
      primaryClass={math.AC},
      url={https://arxiv.org/abs/2606.06241}, 
}

@misc{ishizuka2026perfectoidizaitonfinitealgebrasperfectoid,
      title={On Perfectoidizaiton of Finite Algebras over a Perfectoid Ring}, 
      author={Ishizuka, Ryo and Navarro Chafloque, L{\'e}o},
      year={2026},
      eprint={2606.12229},
      archivePrefix={arXiv},
      primaryClass={math.AC},
      url={https://arxiv.org/abs/2606.12229}, 
}

@article{Dine2024,
  author  = {Dine, Dimitri and Ishizuka, Ryo},
  title   = {Tilting and untilting for ideals in perfectoid rings},
  journal = {Mathematische Zeitschrift},
  year    = {2024},
  volume  = {307},
  number  = {4},
  pages   = {66},
  doi     = {10.1007/s00209-024-03537-1}
}

@misc{bhatt2022prismatizationpadicformalschemes,
      title={The prismatization of $p$-adic formal schemes}, 
      author={Bhargav Bhatt and Jacob Lurie},
      year={2022},
      eprint={2201.06124},
      archivePrefix={arXiv},
      primaryClass={math.AG},
      url={https://arxiv.org/abs/2201.06124}, 
}

@misc{stacks-project,
  author       = {The {Stacks project authors}},
  title        = {The Stacks project},
  howpublished = {\url{https://stacks.math.columbia.edu}},
  year         = {2026},
}

@misc{gabber2018foundationsringtheory,
      title={Foundations for almost ring theory -- Release 7.5}, 
      author={Ofer Gabber and Lorenzo Ramero},
      year={2018},
      eprint={math/0409584},
      archivePrefix={arXiv},
      primaryClass={math.AG},
      url={https://arxiv.org/abs/math/0409584}, 
}
\end{document}